\newtheorem{theorem}{Theorem}[section]
\newtheorem{lemma}[theorem]{Lemma}
\newtheorem{corollary}[theorem]{Corollary}
\theoremstyle{definition}
\newtheorem{definition}[theorem]{Definition}
\newtheorem{example}[theorem]{Example}
\newtheorem{remark}[theorem]{Remark}
\newcommand{\holim}{\operatornamewithlimits{holim}}
\title{Triple delooping for multiplicative hyperoperads}
\author{Florian De Leger}
\address{Mathematical Institute of the Academy \\ \v{Z}itn\'a 25, 115~67 Prague 1, Czech Republic}
\email{de-leger@math.cas.cz}
\author{Maro\v{s} Grego}
\address{Faculty of Mathematics and Physics, Charles University \\
Sokolovsk\'a 49/83, 186~75 Prague 8, Czech Republic}
\email{maros@grego.site}
\thanks{The first author is supported by RVO:67985840 and Praemium Academiae of Martin Markl}
\begin{document}
	
\maketitle

\begin{abstract}
	Using techniques developed in \cite{batanindeleger}, we extend the Turchin/Dwyer-Hess double delooping result to further iterations of the Baez-Dolan plus construction. For $0 \leq m \leq n$, we introduce a notion of $(m,n)$-bimodules which extends the notions of bimodules and infinitesimal bimodules over the terminal non-symmetric operad. We show that a double delooping always exists for these bimodules. For the triple iteration of the Baez-Dolan construction starting from the initial $1$-coloured operad, we provide a further reduceness condition to have a third delooping.
\end{abstract}
	
\tableofcontents

\section{Introduction}

The goal of this paper is to extend the Turchin/Dwyer-Hess double delooping result to further iterations of the Baez-Dolan plus construction.

The Turchin/Dwyer-Hess theorem \cite{turchin,dwyerhess} concerns multiplicative (non-symmetric) operads. The notion of non-symmetric operad will be recalled in Example \ref{noppolynomialmonad}. A non-symmetric operad $\mathcal{O}$ is called \emph{multiplicative} when it is equipped with an operad map $Ass \to \mathcal{O}$, where $Ass$ is the terminal non-symmetric operad. Such a map endows the collection $(\mathcal{O}_n)_{n \geq 0}$ with a structure of cosimplicial object \cite{turchin}, which we will write $\mathcal{O}^\bullet$. The theorem states that if a multiplicative operad $\mathcal{O}$ is \emph{reduced}, that is $\mathcal{O}_0$ and $\mathcal{O}_1$ are contractible, then there is a double delooping
\begin{equation}\label{theoremtdh}
	\Omega^2 \mathrm{Map}_{\mathrm{NOp}} (Ass, u^*(\mathcal{O})) \sim \mathrm{Tot}(\mathcal{O}^\bullet),
\end{equation}
where $\mathrm{Map}$ is the homotopy mapping space, taken in the category $\mathrm{NOp}$ of non-symmetric operads, $u^*$ is the forgetful functor from multiplicative to non-symmetric operads and $\mathrm{Tot}$ is the homotopy totalization. This result is remarkable especially because of an earlier result of Sinha \cite{sinha1} which states that the space of \emph{long knots modulo immersions} \cite{dwyerhess} is equivalent to the totalization of the Kontsevich operad. The double delooping \ref{theoremtdh} has been extended to general deloopings in higher dimensions in \cite{ducoulombier,ducoulombierturchin}. Our goal is also to extend \ref{theoremtdh} but in another direction.

Non-symmetric operads appear when we iterate the Baez-Dolan plus construction, as we will now explain. This construction was first introduced in \cite{baezdolan} in order to define weak $n$-categories. It is a construction which associates to a (symmetric coloured) operad $P$ a new operad $P^+$ for \emph{operads over $P$}. More explicitly, if $P$ is an $S$-coloured operad, $P^+$ is the operad whose algebras are $S$-coloured operads equipped with an operad map to $P$. For example, if $I$ is the initial $1$-coloured operad, a $1$-coloured operad is equipped with an operad map to $I$ if and only if it is a monoid, so $I^+$ is the operad for monoids $Ass$ (as a symmetric operad this time). Iterating this construction, one gets the operad $I^{++}$ for non-symmetric operads. This construction can of course be iterated infinitely many times. The question that naturally arises and that we will try to answer in this paper is if there are delooping results analogous to \ref{theoremtdh} for the next iterations of the plus construction.

We will work with polynomial monads, which are equivalent to symmetric coloured operads with freely acting symmetric groups \cite{kock}. We will recall the notion in Section \ref{sectionpreliminaries}, as well as the description of the plus construction for polynomial monads from \cite{KJBM}. We iterate this construction from the identity monad on the category of sets, which corresponds to the initial $1$-coloured operad. This gives us a sequence of polynomial monads which we call the \emph{opetopic sequence}. Our main tool in order to get delooping results such as \ref{theoremtdh} is the extension of some homotopy theory results from small categories to polynomial monads. Therefore we will recall an important notion of \cite{batanindeleger}, namely the notion of \emph{homotopically cofinal} map of polynomial monads.

In Section \ref{sectionbimodules}, we introduce the notion of \emph{$(m,n)$-bimodule}, for $0 \leq m \leq n$. Let us motivate this notion now. The proofs of the double delooping \ref{theoremtdh} presented in \cite{turchin,dwyerhess}, as well as in \cite{batanindeleger}, all proceed in two steps. Indeed, we have the deloopings
\begin{equation}\label{firsttdhdelooping}
	\Omega \mathrm{Map}_{\mathrm{NOp}}(Ass, u^*(\mathcal{O})) \sim \mathrm{Map}_\mathrm{Bimod}(Ass,v^*(\mathcal{O}))
\end{equation}
if $\mathcal{O}_1$ is contractible, and
\begin{equation}\label{secondtdhdelooping}
	\Omega \mathrm{Map}_{\mathrm{Bimod}}(Ass, v^*(\mathcal{O})) \sim \mathrm{Map}_\mathrm{IBimod}(Ass,w^*(\mathcal{O}))
\end{equation}
if $\mathcal{O}_0$ is contractible, where $\mathrm{Bimod}$ and $\mathrm{IBimod}$ are the category of bimodules over $Ass$ and infinitesimal bimodules over $Ass$ respectively, $Ass$ is seen as both a bimodule and infinitesimal bimodule over itself and $v^*$ and $w^*$ are the appropriate forgetful functors. Since infinitesimal bimodules over $Ass$ are known to be equivalent to cosimplicial objects \cite{turchincosimplicial}, we do indeed get the double delooping \ref{theoremtdh}. Our notion of $(m,n)$-bimodule is an extension of bimodule over $Ass$ and infinitesimal bimodule over $Ass$ to further iterations of the plus construction.

In Section \ref{sectioncofinality}, we construct a map of polynomial monads whose algebras involve these $(m,n)$-bimodules. We prove in Theorem \ref{thmcofinalcospan} that this map is homotopically cofinal, which extends a result of \cite{batanindeleger}.

In Section \ref{sectiondeloopingtheorems}, we investigate a general delooping for $(m,n)$-bimodules. More precisely, we try to exhibit reduceness conditions for a delooping
\[
	\Omega \mathrm{Map}_{\mathrm{Bimod}_{m,n}} (\zeta,u^* (\mathcal{O})) \sim \mathrm{Map}_{\mathrm{Bimod}_{m-1,n}} (\zeta, v^* (\mathcal{O})),
\]
where $\zeta$ is our notation for the terminal object in the category $\mathrm{Bimod}_{m,n}$ of $(m,n)$-bimodules. We prove in Theorem \ref{theoremtdhgeneral} that in the cases $m=n$ and $m=n-1$, the reduceness conditions are analogous to the ones for the Turchin/Dwyer-Hess theorem. Theorem \ref{theoremtdhgeneral} gives us in particular the deloopings \ref{firsttdhdelooping} and \ref{secondtdhdelooping}. We further investigate the third iteration of the plus construction, starting from the identity monad. We start by recalling the definition of the category $\Omega_p$, the version of the dendroidal category for planar trees \cite{moerdijktoen}. We prove that $\Omega_p$ has the same role as the simplex category $\Delta$ has in the Turchin/Dwyer-Hess theorem. We define a notion of \emph{functor equipped with retractions} for a covariant presheaf over $\Omega_p$, which we use to exhibit a reduceness condition for a third delooping. The triple delooping we get in Corollary \ref{corollarytripledelooping} is the analogue of the double delooping \ref{theoremtdh} for the next iteration of the plus construction. Finally, we apply our triple delooping to a non-trivial example, namely the desymmetrisation of the Kontsevich operad.

In a future work, we would like to investigate the geometric meaning of our triple delooping and try to find out if there is an analogue to Sinha's result in this case. We would also like to explore the connections between our direction of delooping results and the one from \cite{ducoulombierturchin}.

\subsection*{Acknowledgement}

Both authors are deeply grateful to Michael Batanin who suggested this project, for his guidance and the many illuminating discussions during our weekly meetings.

\section{Preliminaries}\label{sectionpreliminaries}

\subsection{Polynomial monads}

Recall \cite{gambinokock} that a polynomial monad $T$ is a cartesian monad whose underlying functor is given by the composite $t_! p_* s^*$ for some diagram in $\mathrm{Set}$, the category of sets, of the form
\begin{equation}\label{diagrampolynomial}
	\xymatrix{
		I & E \ar[l]_s \ar[r]^p & B \ar[r]^t & I
	}
\end{equation}
where $p^{-1}(b)$ is finite for all $b \in B$. The elements of the sets $I$, $B$ and $E$ will be called \emph{colours}, \emph{operations} and \emph{marked operations} respectively. The maps $s$, $p$ and $t$ will be called \emph{source map}, \emph{middle map} and \emph{target map} respectively. The diagram \ref{diagrampolynomial} will be called the \emph{polynomial} for the monad.

An algebra of such polynomial monad in a symmetric monoidal category $(\mathcal{E},\otimes,e)$ is given by a collection $(A_i)_{i \in I}$ together with, for all $b \in B$, structure maps
\[
	m_b: \bigotimes_{e \in p^{-1}(b)} A_{s(e)} \to A_{t(b)}
\]
satisfying associativity and unitality axioms.

\begin{example}\label{monpolynomialmonad}
	The free monoid monad $\mathbf{Mon}$ is a polynomial monad \cite[Example 2.6]{batanindeleger} given by
	\[
		\xymatrix{
			1 & Ltr^* \ar[l] \ar[r] & Ltr \ar[r] & 1,
		}
	\]
	where $Ltr$ is the set of (isomorphism classes of) linear trees, $Ltr^*$ is the set of linear trees with one vertex marked, the middle map forgets the marking. Multiplication is given by insertion of a linear tree inside a vertex.
\end{example}

\begin{example}\label{noppolynomialmonad}
	Recall that a non-symmetric operad $A$ in a symmetric monoidal category $(\mathcal{E},\otimes,e)$ in given by a collection of objects $A_n \in \mathcal{E}$ for $n \geq 0$ together with maps
	\[
		A_k \otimes A_{n_1} \otimes \ldots \otimes A_{n_k} \to A_{n_1+\ldots+n_k}.
	\]
	and a map $e \to A_1$ satisfying associativity and unitality axioms. The polynomial $\mathbf{NOp}$ for non-symmetric operads \cite[Example 2.7]{batanindeleger} is given by
	\[
		\xymatrix{
			\mathbb{N} & Ptr^* \ar[l] \ar[r] & Ptr \ar[r] & \mathbb{N},
		}
	\]
	where $Ptr$ is the set of (isomorphism classes of) planar trees, $Ptr^*$ is the set of planar trees with one vertex marked, the source map returns the number of edges directly above the marked vertex, the middle map forgets the marking, the target map returns the number of leaves. Multiplication is given by insertion of a planar tree inside a vertex.
\end{example}

\subsection{Baez-Dolan plus construction}\label{subsectionbdplusconstruction}

The Baez-Dolan plus construction was first introduced in \cite{baezdolan}. Let us recall this construction for polynomial monads from \cite{KJBM}. Assume $T$ is a polynomial monad whose underlying polynomial is given by diagram \ref{diagrampolynomial}. We construct a new polynomial monad $T^+$:
\[
\xymatrix{
	B & tr(T)^* \ar[l] \ar[r] & tr(T) \ar[r] & B,
}
\]
where $tr(T)$ is the set of $T$-trees, that is trees whose vertices are decorated with elements of $B$ and edges are decorated with elements of $I$, satisfying the coherence condition that if a vertex decorated with $b \in B$, its outcoming edge is decorated with $t(b)$ and its incoming edges are decorated with $s(e)$, for $e \in p^{-1}(b)$:
\[
\begin{tikzpicture}[scale=1.2]
\draw (0,-.9) -- (0,0) -- (-.7,.7);
\draw (0,0) -- (1.5,1.5);
\draw (.8,.8) -- (.1,1.5);

\draw[fill=white] (0,0) circle (.26) node{$\tiny{b_1}$};
\draw[fill=white] (.8,.8) circle (.26) node{$\tiny{b_2}$};
\draw (-.05,-.55) node[right]{$\tiny{i_1}$};
\draw (-.25,.25) node[left]{$\tiny{i_2}$};
\draw (.25,.25) node[right]{$\tiny{i_3}$};
\draw (.55,1.05) node[left]{$\tiny{i_4}$};
\draw (1.05,1.05) node[right]{$\tiny{i_5}$};
\end{tikzpicture}
\]
$tr(T)^*$ is the set of $T$-trees with one vertex marked. The source map returns the element which decorates the marked vertex, the middle map forgets the marking, the target map is given by composition of all the elements which decorate the vertices. Multiplication is given by insertion of a tree inside a vertex.

\subsection{Opetopic sequence}

For $n \geq 0$, let us define the polynomial monad $\mathbf{Id}^{+n}$ by induction. $\mathbf{Id}^{+0} = \mathbf{Id}$, that is the identity monad on $\mathrm{Set}$. For $n > 0$, $\mathbf{Id}^{+n} = \left(\mathbf{Id}^{+(n-1)}\right)^+$. One can check that $\mathbf{Id}^{+1} = \mathbf{Mon}$ and $\mathbf{Id}^{+2} = \mathbf{NOp}$, the polynomial monads defined in Example \ref{monpolynomialmonad} and Example \ref{noppolynomialmonad} respectively. We will write the underlying polynomial of $\mathbf{Id}^{+n}$ as follows:
\begin{equation}\label{polynomialopetopicsequence}
\xymatrix{
	I_n & E_n \ar[l]_{s_n} \ar[r]^{p_n} & B_n \ar[r]^{t_n} & I_n.
}
\end{equation}
Note that for $n > 0$, $I_n = B_{n-1}$. To avoid too heavy notations, we will simply write $s$, $t$ and $p$ instead of $s_n$, $t_n$ and $p_n$ respectively when $n \geq 0$ is clear from the context.

\subsection{Homotopically cofinal maps of polynomial monads}

Since a polynomial monad $T$ is in particular cartesian, it makes sense to talk about lax morphisms of categorical $T$-algebras.

\begin{definition}
	Let $f: S \to T$ be a map of polynomial monads and $A$ a categorical $T$-algebra. An \emph{internal $S$-algebra} in $A$ is a lax morphism of $T$-algebras $1 \to f^*(A)$, where $1$ is the terminal $T$-algebra and $f^*$ is the restriction functor.
\end{definition}

We have the following theorem \cite{batanin}:

\begin{theorem}
	Let $f: S \to T$ be a map of polynomial monads. The $2$-functor
	\[
		Int_S: \mathrm{Alg}_T(\mathrm{Cat}) \to \mathrm{Cat},
	\]
	which sends a categorical $T$-algebra $A$ to the category of internal $S$-algebras in $A$, is representable. We will write $T^S$ the representing object and call it \emph{classifier induced by $f$}.
\end{theorem}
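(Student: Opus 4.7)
The plan is to construct $T^S$ explicitly as a categorical $T$-algebra built out of the combinatorics of $f$ and the plus-construction trees, and then verify the representability bijection directly. Since $Int_S$ assigns to each $T$-algebra its category of internal $S$-algebras, the representing object $T^S$ should be the universal categorical $T$-algebra carrying an internal $S$-algebra, in the spirit of classifying objects for algebraic theories.

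Concretely, for each colour $i \in I_T$ I would define $(T^S)_i$ to be the category whose objects are isomorphism classes of $S$-decorated $T$-trees (in the sense of Section \ref{subsectionbdplusconstruction}) with output colour $i$: that is, $T$-trees whose vertices are further labelled by $S$-operations whose images under $f$ are the $T$-operations originally labelling those vertices. Morphisms are generated by the elementary $S$-contractions, in which a subtree of $S$-labels is collapsed to a single vertex via the $S$-multiplication, subject to the associativity and unitality relations forced by the $S$-monad structure. The $T$-algebra structure on $T^S$ is the strict one given by grafting of $T$-trees. There is then a canonical \emph{universal} internal $S$-algebra $\eta: 1 \to f^*(T^S)$: at each $S$-colour it picks the trivial one-edge tree, and its lax structure $2$-cells at each $S$-operation are precisely the corresponding elementary contraction morphisms in $T^S$. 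Given any internal $S$-algebra $\phi: 1 \to f^*(A)$ in a categorical $T$-algebra $A$, one defines a $T$-algebra map $\widehat{\phi}: T^S \to A$ by evaluating each $S$-decorated tree using the $T$-structure of $A$ on the values that $\phi$ assigns to the $S$-decorations, and sending each generating contraction to the corresponding lax structure $2$-cell of $\phi$.

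The main obstacle is to show that $\widehat{\phi}$ is well defined and that $\phi \mapsto \widehat{\phi}$ is a natural bijection with inverse $F \mapsto F \circ \eta$. Well-definedness amounts to checking that every relation between iterated $S$-contractions in $(T^S)_i$ --- coming from the associativity and unitality of $S$ --- is sent to a commutative diagram in $A$, which is precisely the coherence content of the lax morphism axioms for $\phi$. That the two assignments are mutually inverse reduces to the statement that $T^S$ together with $\eta$ is freely generated in the appropriate $2$-categorical sense: every $S$-decorated tree arises from $\eta$-values by iterated application of the $T$-structure and generating $S$-contractions, in an essentially unique way. This freeness is the most delicate point, and relies on cartesianness of $T$ and $S$ together with the tree calculus underlying the plus construction.
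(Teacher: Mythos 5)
The paper does not actually prove this theorem: it is cited from \cite{batanin}, and the remark immediately after records the proof's main output, namely that $T^S$ is the codescent object of the truncated simplicial diagram $F_T(\phi_!(S^\bullet 1))$ of free $T$-algebras. Your attempt to replace that by an explicit combinatorial construction is a legitimate alternative strategy, but the explicit description you give of $T^S$ is incorrect; you are working one plus-construction level too high.

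Concretely, from the codescent-object formula the objects of $(T^S)_j$ are the elements of $F_T(\phi_!(1))_j$, i.e.\ a \emph{single} operation $b\in B_T$ with $t(b)=j$ together with an $S$-\emph{colour} over each input colour $s(e)$. They are not $T$-trees, and the decorations are colours, not $S$-operations. What you describe --- $T$-trees (elements of $tr(T)=B_{T^+}$) with $S$-operation labels on vertices --- is precisely the object set of the classifier $(T^+)^{(S^+)}$ for the induced map $f^+:S^+\to T^+$, which is a different object. The confusion is understandable because Example~\ref{examplecategorystructureonbn} in the paper treats exactly $(T^+)^{(T^+)}$, whose objects \emph{are} $T$-trees; but for the general $T^S$ those trees must be collapsed down one level. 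A concrete check makes this vivid: take $f=\mathrm{id}:\mathbf{Mon}\to\mathbf{Mon}$. Internal $\mathbf{Mon}$-algebras in a monoidal category are monoids, so $\mathbf{Mon}^{\mathbf{Mon}}$ must be the augmented simplex category, whose object set is $\mathbb{N}=F_{\mathbf{Mon}}(1)$, matching the formula. Your description would instead produce the set of planar trees as objects, which is too large, and the contraction morphisms you allow are not isomorphisms, so passing to ``isomorphism classes'' does not repair it. The rest of your argument (the universal internal $S$-algebra $\eta$, the well-definedness of $\widehat{\phi}$, and the freeness claim) inherits this error, so as written the proof does not establish representability. The underlying idea of producing the representing object by a concrete presentation is sound, but the presentation has to be the one given by the codescent object: objects from $F_T(\phi_!(1))$, generating morphisms from $F_T(\phi_!(S 1))$, and relations from $F_T(\phi_!(S^2 1))$.
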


\begin{remark}
	Let $f: S \to T$ be a map of polynomial monads given by $\phi: I \to J$ on colours. It was proved in \cite{batanin} that the classifier induced by $f$ can be computed as a truncated simplicial $T$-algebra
	\begin{equation}\label{formulaclassifier}
		\xymatrix{
			F_T(\phi_!(1)) \ar[r] & F_T(\phi_!(S 1)) \ar@<-1ex>[l] \ar@<1ex>[l] & F_T(\phi_!(S^2 1)), \ar[l] \ar@<-1ex>[l] \ar@<1ex>[l]
		}
	\end{equation}
	where $1$ is the terminal $I$-collection, $F_T$ is the free $T$-algebra functor and $\phi_!$ is the left adjoint of the restriction $\phi^*$, given by coproduct over fibres of $\phi$.
\end{remark}

\begin{remark}
	The classifier $T^S$ being a categorical $T$-algebra, it has an underlying collection of categories. We will consider classifiers as categories, implying implicitly that we are talking about an arbitrary category of the underlying collection. Any statement made about a classifier, seen as a category, will imply that it is true for any category of the underlying collection.
\end{remark}

\begin{example}\label{examplecategorystructureonbn}
	For a polynomial monad $T$ given by diagram \ref{diagrampolynomial}, we describe the classifier induced by the identity on the polynomial monad $T^+$. The set of objects is the set of $T$-trees $tr(T)$. Morphisms are contractions of edges and multiplication of the elements of the set $B$ of operations of the polynomial monad $T$ accordingly, or insertion of unary vertices decorated with the unit. In particular, this gives us a category structure on $B_n$.
\end{example}

We now recall an important notion of \cite{batanindeleger}:

\begin{definition}
	A map of polynomial monads $f: S \to T$ is \emph{homotopically cofinal} if $N(T^S)$ is contractible.
\end{definition}

\section{Bimodules in the opetopic sequence}\label{sectionbimodules}

\subsection{$\downarrow$-construction}

\begin{definition}
	For $n \geq 0$, we call \emph{tree with white vertices} a pair $(b,W)$, where $b \in B_n$ and $W$ is a subset of the set of vertices of $b$. We call \emph{white vertices} the elements of $W$. The other vertices of $b$ will be called \emph{black vertices}.
\end{definition}

\begin{definition}\label{constructiontree}
	For $n > 0$, we associate to a tree with white vertices $(b \in B_n,W)$, a tree with white vertices $(b^\downarrow \in B_{n-1},W^\downarrow)$ as follows. Let $b_0$ be the maximal subtree of $b$ containing the root edge and only black vertices. We take $b^\downarrow = t(b_0) \in I_n = B_{n-1}$. Note that the vertices of $b^\downarrow$ correspond to the leaves of $b_0$ which are also edges of $b$. We define $W^\downarrow$ as the set of vertices of $b^\downarrow$ which correspond to an internal edge in $b$.
\end{definition}
	
\begin{example}
	Let $(b,W)$ be the planar tree in the following picture:
	\[
	\begin{tikzpicture}[scale=.8]
	\draw (0,-.5) -- (0,0) -- (-2.1,2.1);
	\draw (-1,1) -- (-1.7,1.7);
	\draw (-1.7,1.7) -- (-1.3,2.1);
	\draw (-.3,1.7) -- (-.7,2.1);
	\draw (-1,1) -- (-.3,1.7);
	\draw (-.3,1.7) -- (.1,2.1);
	\draw (0,0) -- (1,1);
	\draw[fill] (0,0) circle (2pt);
	\draw[fill] (-1,1) circle (2pt);
	\draw[fill=white] (-1.7,1.7) circle (2pt);
	\draw[fill] (-.3,1.7) circle (2pt);
	\draw[fill=white] (1,1) circle (2pt);
	\end{tikzpicture}
	\]
	Then $b_0$ will be the planar tree on the left of the following picture and $(b^\downarrow,W^\downarrow)$ will be the linear tree on the right:
	\[
		\begin{tikzpicture}[scale=.8]
			\draw (0,-.5) -- (0,0) -- (-1.7,1.7);
			\draw (-1,1) -- (-1.7,1.7);
			\draw (-.3,1.7) -- (-.7,2.1);
			\draw (-1,1) -- (-.3,1.7);
			\draw (-.3,1.7) -- (.1,2.1);
			\draw (0,0) -- (1,1);
			\draw[fill] (0,0) circle (2pt);
			\draw[fill] (-1,1) circle (2pt);
			\draw[fill] (-.3,1.7) circle (2pt);
			
			\begin{scope}[shift={(6,1)}]
			\draw (-1.5,0) -- (1.5,0);
			\draw[fill=white] (-1,0) circle (2pt);
			\draw[fill] (-.333,0) circle (2pt);
			\draw[fill] (.333,0) circle (2pt);
			\draw[fill=white] (1,0) circle (2pt);
			\end{scope}
		\end{tikzpicture}
	\]
	Indeed, $b^\downarrow$ has four vertices which correspond to leaves of $b_0$. The first and last vertices of $b^\downarrow$ are white because they correspond to leaves in $b_0$ which are internal edges in $b$.
\end{example}

The following lemma will be useful later:

\begin{lemma}\label{lemmainsertingtrunk}
	Let $(b,W)$ be a tree with white vertices and $(b^\downarrow,W^\downarrow)$ be the tree obtained by applying the construction of Definition \ref{constructiontree}. Let $\tilde{b}^\downarrow$ be a tree obtained from $b^\downarrow$ by adding unary vertices and $\tilde{W}^\downarrow$ be the set $W^\downarrow$ plus the unary vertices which have been added. Then there is a tree with white vertices $(\tilde{b},\tilde{W})$ such that $(\tilde{b}^\downarrow,\tilde{W}^\downarrow)$ is the tree obtained from it by applying the construction of Definition \ref{constructiontree}.
\end{lemma}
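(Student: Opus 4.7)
The plan is to proceed by induction on the number of inserted unary vertices, which reduces the problem to the case where a single unary white vertex $x$ is added to $b^\downarrow$ on a single edge $e$.

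The key observation is that $b^\downarrow = t(b_0)$ is the composition, in the polynomial monad $\mathbf{Id}^{+(n-1)}$, of the $B_{n-1}$-decorations of the vertices of $b_0$, grafted along the internal edges of $b_0$. Under this composition, vertices of $b^\downarrow$ are in bijection with leaves of $b_0$, while edges of $b^\downarrow$ come either from an edge internal to a single decoration $p_v$ for some $v\in b_0$, or from the grafting identification of two decorations along an internal edge of $b_0$.

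The construction of $(\tilde b,\tilde W)$ proceeds as follows. First identify a vertex $v$ of $b_0$ whose decoration contains the edge $e$, choosing canonically between the two possibilities when $e$ arises from a grafting. Then modify $b$ by adding a new incoming edge at $v$ in the appropriate position; by the coherence condition of the plus construction, this replaces the decoration $p_v$ by the decoration $\tilde p_v$ obtained from $p_v$ by inserting a unary vertex on the edge corresponding to $e$. Attach above $v$, along the new incoming edge, a unary white vertex $v_w$ (with any compatible decoration), and set $\tilde W=W\cup\{v_w\}$.

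For the verification, $\tilde b_0$ equals $b_0$ with only the decoration at $v$ modified, and it remains the maximal all-black subtree containing the root edge of $\tilde b$. Its leaves are those of $b_0$ together with the new incoming edge of $v$, which is an internal edge of $\tilde b$ leading to the white vertex $v_w$. Applying $t$, the composition $t(\tilde b_0)=\tilde b^\downarrow$ gains exactly one new unary vertex coming from the one inserted in $\tilde p_v$, and this vertex lies in $\tilde W^\downarrow$ because it corresponds to an internal edge of $\tilde b$ going to a white vertex.

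The main obstacle will be to locate the edge $e$ of $b^\downarrow$ precisely inside the combinatorial structure of $b$ and $b_0$: an edge of $b^\downarrow$ may lie inside a single decoration or arise from a grafting of two decorations, and the choice affects which $p_v$ to modify. A uniform treatment requires unwinding the explicit formula for the monad multiplication in $\mathbf{Id}^{+(n-1)}$. Once this correspondence is fixed, the remaining verification is a mechanical check of the composition formula.
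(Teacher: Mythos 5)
Your outline captures the right essential idea — modify a decoration inside $b_0$ and attach a new white vertex where the corresponding new leaf appears — but it misses the one simplification that makes the paper's proof short, and the gap you flag as the ``main obstacle'' is a real gap, not a routine check.

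The paper observes first that one may assume without loss of generality that $b_0$ has been contracted to a corolla. This is legitimate because the lemma's conclusion concerns only $(\tilde{b}^\downarrow,\tilde{W}^\downarrow)$, and $b^\downarrow = t(b_0)$ and $W^\downarrow$ are invariant under contracting internal edges of $b_0$; the $(b,W)$ in the hypothesis only plays the role of fixing what $(\tilde{b}^\downarrow,\tilde{W}^\downarrow)$ should be. After this reduction, the root vertex of $b$ is decorated by $b^\downarrow$ itself, and every edge of $b^\downarrow$ lives inside this single decoration. There is nothing left to ``locate'': one replaces the root decoration $b^\downarrow$ by $\tilde{b}^\downarrow$ and, for each inserted unary vertex (decorated by $\eta(c)$ with $c$ the decoration of the edge), attaches one new trunk above the root vertex, with the trunk vertex decorated by the free living edge $\nu(c)\in B_{n-1}$ and the trunk edge by $t\nu(c)=\eta(c)$. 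Coherence of the $T$-tree and the description of $\tilde{W}^\downarrow$ are then immediate. By working with an uncontracted $b_0$, you are forced to analyse how an edge of $t(b_0)$ sits inside the decorations $p_v$ and across grafting loci of the monad multiplication; this is precisely the analysis you defer. In the case $m=n-2$ of Lemma~\ref{lemmabimodulesbimodules} — which is where this lemma is actually used — $b_0$ genuinely need not be a corolla, so the analysis you defer is not vacuous, and the grafting ambiguity (an edge of $t(b_0)$ can lie simultaneously in $p_v$ and $p_{v'}$ for adjacent $v,v'$) requires a concrete resolution together with a check that the chosen modification of $p_v$ is compatible with the edge decorations forced by the opetopic coherence condition. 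Until that is supplied, the proof is not complete; the cleanest fix is simply to prepend the WLOG contraction, after which your construction collapses to the paper's. A minor terminology point: the new white vertex $v_w$ is a \emph{trunk}, i.e.\ a vertex with \emph{no} inputs (decorated by a free living edge), not a unary vertex; this matters because the number of inputs of a vertex of $\tilde{b}$ is determined by the number of vertices of its $B_{n-1}$-decoration.
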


\begin{proof}
	Recall from Example \ref{examplecategorystructureonbn} that $B_n$ has a category structure where morphisms are contractions of edges and insertion of unary vertices. Let us assume that the maximal subtree $b_0$ of $b$ containing the root and only black vertices has been contracted to a corolla. This can be done without loss of generality because the tree obtained by applying the construction of Definition \ref{constructiontree} remains the same. Assume $\tilde{b}^\downarrow$ is obtained from $b^\downarrow$ by adding a unary vertex decorated with $\eta(c) \in B_{n-2}$, where $c \in I_{n-2}$ is the decoration of the edge where the unary vertex is added. Then we take $\tilde{b}$ as the tree obtained from $b_0$ by adding a trunk above the root vertex. The vertex of the trunk is decorated with $\nu(c) \in B_{n-1}$, where $\nu(c)$ is the free living edge decorated with $c$. The edge of the trunk is decorated with $t \nu(c) = \eta(c)$. The tree $b^\downarrow$ which decorates the root vertex of $b$ is replaced by $\tilde{b}^\downarrow$.
	
	For example, in the following picture, the tree on the left is $\tilde{b}$, and the inserted trunk is dotted. The tree on the right is $\tilde{b}^\downarrow$, and the inserted unary vertex is dotted. It has four vertices, including the added unary vertex, since the root vertex of $\tilde{b}$ has four edges above it, including the added trunk:
	\[
	\begin{tikzpicture}[scale=.8]
	\draw (0,-.4) -- (0,0) -- (-.9,1);
	\draw (-.3,1) -- (0,0) -- (.3,1);
	\draw[densely dotted] (0,0) -- (.9,1);
	\draw (-1.35,1.8) -- (-.9,1) -- (-.9,1.8);
	\draw (-.9,1) -- (-.45,1.8);
	\draw (-.05,1.8) -- (.3,1) -- (.65,1.8);
	\draw[fill] (0,0) circle (2pt);
	\draw[fill=white] (-.9,1) circle (2pt);
	\draw[fill=white] (.3,1) circle (2pt);
	\draw[fill=white,densely dotted] (.9,1) circle (2pt);
	\draw[fill] (-.05,1.8) circle (2pt);
	\begin{scope}[shift={(6,0)}]
	\draw (0,-.4) -- (0,0) -- (-.8,.8);
	\draw (0,0) -- (1.6,1.6);
	\draw (.8,.8) -- (-.4,2);
	\draw (0,2) -- (0,1.6) -- (.4,2);
	\draw[fill=white] (0,0) circle (2pt);
	\draw[fill] (.8,.8) circle (2pt);
	\draw[fill=white,densely dotted] (.4,1.2) circle (2pt);
	\draw[fill=white] (0,1.6) circle (2pt);		
	\end{scope}
	\end{tikzpicture}
	\]
	
	This can of course be done for multiple unary vertices added.
\end{proof}

\subsection{$m$-dimensional sets of vertices}

For $0 \leq i \leq n$ and a tree with white vertices $(b \in B_n,W)$, we will write $(b^{\downarrow i},W^{\downarrow i})$ for the tree with white vertices obtained by iterating $i$ times the construction of Definition \ref{constructiontree}.

\begin{definition}\label{definitionmdimensionalsetofvertices}
	Let $0 \leq m \leq n$ and $(b,W)$ a tree with white vertices. We say that $W$ is \emph{$m$-dimensional} if
	\begin{itemize}
		\item for $0 \leq i < n-m$, $W^{\downarrow i}$ does not contain any pairs of vertices one above the other,
		\item $W^{\downarrow (n-m)}$ is the set of vertices of $b^{\downarrow (n-m)}$.
	\end{itemize}
\end{definition}

\begin{remark}\label{remarkallmdimensional}
	It is immediate from Definition \ref{definitionmdimensionalsetofvertices} that if $(b,W)$ is $m$-dimensional, then $(b^{\downarrow i},W^{\downarrow i})$ is $m$-dimensional for all $0 \leq i \leq n-m$.
\end{remark}

\begin{remark}\label{remarkbijections}
	Let $(b,W)$ be a tree with white vertices such that $W$ is $m$-dimensional. Then all the sets $W^{\downarrow i}$ for $0 \leq i \leq n-m$ are in bijection with each other. Indeed, the construction of Definition \ref{constructiontree} induces a surjection from $W^{\downarrow i}$ to $W^{\downarrow (i+1)}$ for $0 \leq i < n-m$. The first condition of Definition \ref{definitionmdimensionalsetofvertices} implies that these surjections are also injective.
\end{remark}

\begin{remark}\label{casezeroandn}
	Let $n > 0$ and $(b \in B_n,W)$ be a tree with white vertices. $W$ is $n$-dimensional if it is the set of vertices of $b$. It is $0$-dimensional if it is a singleton. It is $n-1$-dimensional if each path from the root to a leaf in $b$ contains exactly one vertex of $W$. Indeed, the first condition of Definition \ref{definitionmdimensionalsetofvertices} ensures that any path contains at most one vertex, while the second one ensures that any path contains at least one vertex.
\end{remark}

\subsection{$(m,n)$-bimodules}

\begin{definition}\label{operationsbimodmn}
	For $0 \leq m \leq n$, let $B_{m,n}$ be the set of trees with white vertices $(b,W)$ where $b \in B_n$, $W$ is $m$-dimensional and the tree does not contain any pairs of adjacent black vertices or any unary black vertices.
\end{definition}

\begin{definition}
	Let $\mathbf{Bimod}_{m,n}$ be the polynomial monad represented by
	\[
		\xymatrix{
			I_n & E_{m,n} \ar[l]_s \ar[r]^p & B_{m,n} \ar[r]^t & I_n,
		}
	\]
	where $E_{m,n}$ is the set of elements of $B_{m,n}$ with a marked white vertex. The source, target and middle map are defined as in Subsection \ref{subsectionbdplusconstruction}. Multiplication is given by inserting a tree inside a white vertex, then contracting all edges between two black vertices and removing all unary black vertices.
\end{definition}

\begin{definition}\label{definitionmnbimodule}
	For $0 \leq m \leq n$, an \emph{$(m,n)$-bimodule} in a symmetric monoidal category $(\mathcal{E},\otimes,e)$ is an algebra of $\mathbf{Bimod}_{m,n}$ in $\mathcal{E}$.
\end{definition}

For an $I_n$-collection $A$ in $\mathcal{E}$, $b \in B_n$ and $V \subset p^{-1}(b)$, we will write
\[
	A_{s(V)} = \bigotimes_{v \in V} A_{s(v)}.
\]

\begin{remark}
	Explicitly, an \emph{$(m,n)$-bimodule} in a symmetric monoidal category $(\mathcal{E},\otimes,e)$ is given by
	\begin{itemize}
		\item a collection of objects $(A_i)_{i \in I_n}$ in $\mathcal{E}$,
		\item for all $(b,V) \in B_{m,n}$, a map
		\[
			\mu_{b,V}: A_{s(V)} \to A_{t(b)},
		\]
	\end{itemize}
	satisfying associativity and unitality axioms.
\end{remark}

\begin{remark}
	According to Remark \ref{casezeroandn}, $B_{n,n} = B_n$, so the polynomial monad $\mathbf{Bimod}_{n,n}$ is just $\mathbf{Id}^{+n}$. Also according to Remark \ref{casezeroandn}, the elements of $B_{0,n}$ are trees with exactly one white vertex. We deduce that $E_{0,n} = B_{0,n}$ and the middle map for the polynomial monad $\mathbf{Bimod}_{0,n}$ is the identity, so this polynomial monad is actually a small category.
\end{remark}

\begin{remark}\label{examplelowercases}
	According to Remark \ref{casezeroandn}, the sets $B_{2,2}$, $B_{1,2}$ and $B_{0,2}$ are the sets of planar trees, planar trees with white vertices, called \emph{beads} in \cite{turchin}, lying on the same horizontal line and planar trees with one distinguished bead, respectively. One recognises the polynomial monad $\mathbf{Bimod}_{m,2}$, when $m=2$, $m=1$ and $m=0$, as the monad for non-symmetric operads, bimodules over the terminal non-symmetric operad $Ass$ and infinitesimal bimodules over $Ass$ described in \cite{batanindeleger}, respectively.
\end{remark}

\subsection{Universal classifier for $(m,n)$-bimodules}\label{subsectionuniversalclassifierbimodmn}

Let us now describe the classifier induced by the identity on the polynomial monad $\mathbf{Bimod}_{m,n}$. The set of objects is $B_{m,n}$. The morphisms can be given by nested trees, that is trees $(b,W) \in B_{m,n}$, where each vertex $v \in W$ has itself a tree inside it, called nest. The following picture is an example of such nested tree:
\[
\begin{tikzpicture}[scale=.9]
\draw (.4,-.1) -- (.4,.2) -- (-1.2,1);
\draw (.4,.2) -- (2,1);
\draw (-1.2,1) -- (-1.2,1.5) -- (-1.9,2.2);
\draw (-1.2,1.5) -- (-.5,2.2);
\draw (2,1) -- (2,1.5) -- (1.7,2.1) -- (1.2,3.1);
\draw (2,1.5) -- (2.8,3.1);
\draw (2.3,2.1) -- (2,3.1);

\draw (2,1.9) circle (.9);
\draw (-1.2,1.55) circle (.55);
\draw (-1.75,1.55);
\draw[fill=white] (-1.2,1.5) circle (1.6pt);
\draw[fill] (.4,.2) circle (1.6pt);
\draw[fill] (2,1.5) circle (1.6pt);
\draw[fill=white] (2.3,2.1) circle (1.6pt);
\draw[fill=white] (1.7,2.1) circle (1.6pt);
\draw (2.9,1.9);
\end{tikzpicture}
\]
The source of the nested tree is obtained by inserting the nests into each corresponding vertex and then contracting the edges connecting black vertices if necessary. The target is obtained by forgetting the nests. For example, the nested tree of the previous picture represents the following morphism:
\[
\begin{tikzpicture}[scale=.7]
\draw (0,-.1) -- (0,.2) -- (-1.3,1) -- (-1.9,1.6);
\draw (-1.3,1) -- (-.7,1.6);
\draw (0,1.6) -- (0,.2) -- (1.3,1) -- (1.9,1.6);
\draw (1.3,1) -- (.7,1.6);
\draw[fill] (0,.2) circle (2pt);
\draw[fill=white] (-1.3,1) circle (2pt);
\draw[fill=white] (0,1) circle (2pt);
\draw[fill=white] (1.3,1) circle (2pt);

\draw (3.2,.8) node{$\longrightarrow$};

\begin{scope}[shift={(6,0)}]
\draw (0,-.1) -- (0,.2) -- (-.9,1) -- (-1.5,1.6);
\draw (-.9,1) -- (-.3,1.6);
\draw (0,.2) -- (.9,1) -- (1.5,1.6);
\draw (.9,1.6) -- (.9,1) -- (.3,1.6);
\draw[fill] (0,.2) circle (2pt);
\draw[fill=white] (-.9,1) circle (2pt);
\draw[fill=white] (.9,1) circle (2pt);
\end{scope}
\end{tikzpicture}
\]

\subsection{Bimodules over $(m,n)$-bimodules}

\begin{lemma}\label{lemmamminusonedimensional}
	Let $(b,W) \in B_{m,n}$ with an $m-1$-dimensional $V \subset W$. Then the complement of $V$ in $W$ is the canonical union of two sets $V_-$ and $V_+$.
\end{lemma}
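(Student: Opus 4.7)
The strategy is to pass to the top level $n-m$ of the $\downarrow$-iteration, where the partition becomes transparent, and then transport it back. By Remarks \ref{remarkallmdimensional} and \ref{remarkbijections}, iterating the construction of Definition \ref{constructiontree} $n-m$ times on $(b,W)$ produces a pair $(b^{\downarrow(n-m)},W^{\downarrow(n-m)})$ with $b^{\downarrow(n-m)} \in B_m$ and $W^{\downarrow(n-m)}$ equal to the full vertex set of $b^{\downarrow(n-m)}$, together with a canonical bijection $W \leftrightarrow W^{\downarrow(n-m)}$. I would transport $V \subseteq W$ along this bijection to obtain a subset $\tilde V \subseteq W^{\downarrow(n-m)}$.

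The key claim is that $\tilde V$, viewed as the set of white vertices of $b^{\downarrow(n-m)} \in B_m$, is itself $(m-1)$-dimensional. Granting this claim, Remark \ref{casezeroandn} applied inside $B_m$ ensures that every root-to-leaf path of $b^{\downarrow(n-m)}$ contains exactly one vertex of $\tilde V$. Each vertex $w$ of $b^{\downarrow(n-m)}$ outside $\tilde V$ then acquires a canonical position relative to $\tilde V$: either the unique $\tilde V$-vertex on every root-to-leaf path through $w$ lies strictly below $w$ (closer to the root), or every such $\tilde V$-vertex lies strictly above $w$ (closer to a leaf). The two cases are mutually exclusive, because a $\tilde V$-vertex lying below $w$ on one path sits on the common root-to-$w$ segment and therefore lies on every other path through $w$; uniqueness then forbids a second $\tilde V$-vertex above $w$. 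This partitions the complement of $\tilde V$ in $W^{\downarrow(n-m)}$ into a below-piece $\tilde V_-$ and an above-piece $\tilde V_+$, and pulling back through the canonical bijection defines $V_-$ and $V_+$ in $W \setminus V$.

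The main obstacle I anticipate is the key claim above: the $(m-1)$-dimensionality of $V$ is defined with respect to its own $\downarrow$-tower, in which $W \setminus V$ is black, while the bijection to $W^{\downarrow(n-m)}$ uses the $(b,W)$-tower, in which $W \setminus V$ is white. The two $\downarrow$-steps differ, since black vertices outside $V$ get absorbed into the maximal black subtree. I would handle this by induction on $n-m$, comparing at each step the black subtree $b_0$ computed with the $V$-colouring to the one computed with the $W$-colouring, and using Lemma \ref{lemmainsertingtrunk} to interpret the extra leaves of the larger black subtree as unary black vertices adjoined at the lower level. Once the two colourings are reconciled level by level, the dimension condition on $V$ transfers verbatim to $\tilde V$ and the proof above goes through.
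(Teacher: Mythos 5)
Your overall strategy — reduce to the bottom level, where $W^{\downarrow(n-m)}$ is the full vertex set, so that Remark \ref{casezeroandn} makes the partition of the complement by ``below a $V$-vertex'' versus ``above a $V$-vertex'' transparent — is indeed the strategy the paper uses, and you correctly identify the crux: $V$ is declared $(m-1)$-dimensional via its \emph{own} $\downarrow$-tower (vertices in $W\setminus V$ treated as black), while the bijection $W\cong W^{\downarrow(n-m)}$ comes from the $W$-tower, and these two towers do not commute on the nose.

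The gap is in your proposed reconciliation. Lemma~\ref{lemmainsertingtrunk} is a lifting statement in the wrong direction and of the wrong kind: it takes a modification of $b^\downarrow$ by \emph{inserting unary vertices} and lifts it to a modification of $b$. But the relation between the two towers at a single step is a \emph{contraction}, not a unary-vertex insertion. Concretely, writing $(b^\downarrow_V,V^\downarrow)$ and $(b^\downarrow_W,W^\downarrow)$ for the two one-step $\downarrow$'s, the maximal black subtree for the $V$-colouring contains the one for the $W$-colouring, and the upshot (which the paper records) is that $b^\downarrow_W$ is obtained from $b^\downarrow_V$ by contracting some edges joining black vertices of $b^\downarrow_V$. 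Since only black vertices get merged, the white set $V^\downarrow$ injects into the vertex set of $b^\downarrow_W$ and stays $(m-1)$-dimensional there; then a one-step induction on $n-m$, together with the bijections of Remark~\ref{remarkbijections}, finishes the argument. Replacing your appeal to Lemma~\ref{lemmainsertingtrunk} by this contraction observation is exactly what is needed to close the gap you flagged; without it, the ``transfers verbatim'' in your last sentence is unjustified.
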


\begin{proof}
	If $m=n$, then according to Remark \ref{casezeroandn} $W$ is the set of vertices of $b$ and each path from a leaf to the root in $b$ contains exactly one vertex of $V$. Then we can take $V_-$ and $V_+$ as the set of vertices in $W$ that are below and above a vertex of $V$, respectively.
	
	If $m < n$, let $(b^\downarrow_V,V^\downarrow)$ and $(b^\downarrow_W,W^\downarrow)$ be the pairs obtained by applying the construction of Definition \ref{constructiontree} to $(b,V)$ and $(b,W)$, respectively. According to Remark \ref{remarkallmdimensional}, $V^\downarrow$ and $W^\downarrow$ are $m-1$-dimensional and $m$-dimensional, respectively. It is easy to see from the construction that, since $V \subset W$, $b^\downarrow_W$ can be obtained from $b^\downarrow_V$ by contracting some of its edges, which connect black vertices. So $V^\downarrow$ can be seen as a set of vertices of $b^\downarrow_W$ and it is also $m-1$-dimensional as a set of vertices of this tree. By induction, the complement of $V^\downarrow$ in $W^\downarrow$ is the canonical union of two sets. According to Remark \ref{remarkbijections}, $W$ and $W^\downarrow$, as well as $V$ and $V^\downarrow$, are in bijection. This concludes the proof.
\end{proof}

\begin{definition}\label{defbimodoverbimod}
	For $0 < m \leq n$ and $A$ and $B$ two $(m,n)$-bimodules in $\mathcal{E}$, an \emph{$A-B$-bimodule} $C$ is given by
	\begin{itemize}
		\item a collection of objects $(C_i)_{i \in I_n}$,
		\item for all $(b,W) \in B_{m,n}$ with an $m-1$-dimensional $V \subset W$, a map
		\begin{equation}\label{mappointedbimodule}
			A_{s(V_-)} \otimes C_{s(V)} \otimes B_{s(V_+)} \to C_{t(b)},
		\end{equation}
	\end{itemize}
	where $V_-$ and $V_+$ are given by Lemma \ref{lemmamminusonedimensional}, satisfying associativity and unitality axioms.
\end{definition}

\begin{lemma}\label{lemmabimodulesbimodules}
	For $0 < m \leq n$, let $\zeta$ be the terminal $(m,n)$-bimodule. If $n-2 \leq m$, the category of $(m-1,n)$-bimodules is isomorphic to the category of $\zeta-\zeta$-bimodules.
\end{lemma}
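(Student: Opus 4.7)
The plan is to establish the isomorphism by producing a bijection between the operations of the two polynomial monads and then checking compatibility with the monad multiplication. On one side, a $\zeta$-$\zeta$-bimodule structure on a collection $(C_i)_{i\in I_n}$ is, after specialising Definition \ref{defbimodoverbimod} to $A=B=\zeta$ so that the factors $A_{s(V_-)}$ and $B_{s(V_+)}$ become terminal, the data of a single map $C_{s(V)}\to C_{t(b)}$ for each pair $((b,W),V)$ with $(b,W)\in B_{m,n}$ and $V\subset W$ being $(m-1)$-dimensional. On the other side, an $(m-1,n)$-bimodule is, by Definition \ref{definitionmnbimodule}, the data of a map $C_{s(V')}\to C_{t(b')}$ indexed by $(b',V')\in B_{m-1,n}$.

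First I would construct a forward map $\Phi$ sending $((b,W),V)$ to an element of $B_{m-1,n}$ by blackening the vertices of $W\setminus V=V_-\sqcup V_+$ (using Lemma \ref{lemmamminusonedimensional}) and then applying the monad multiplication of $\mathbf{Bimod}_{m-1,n}$, namely contracting edges between now-adjacent black vertices and deleting unary black vertices. The target $t(b)$ and the source $s(V)$ are unchanged, and the $(m-1)$-dimensionality of $V=V'$ survives these all-black contractions. The inverse $\Psi$ is built as follows: given $(b',V')$, set $V=V'$; the hypothesis $n-m\le 2$ makes the inflation canonical. When $m=n$, Remark \ref{casezeroandn} forces $W$ to be all vertices of $b=b'$. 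When $m=n-1$, the condition that each root-to-leaf path of $b$ carry exactly one vertex of $W$, together with the absence of adjacent or unary black vertices in $b'$, uniquely determines the extra whites, possibly after inserting unary white vertices as in Lemma \ref{lemmainsertingtrunk}. The case $m=n-2$ is handled one level deeper via the $\downarrow$-construction and again Lemma \ref{lemmainsertingtrunk}. Beyond $n-m=2$ there would be genuine combinatorial freedom in how to place further intermediate layers, which is why the bound cannot be dropped.

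Finally I would check that $\Phi$ and $\Psi$ are mutually inverse, preserve colours and fibres of $s,t$, and intertwine the two monad multiplications; both composition laws amount to tree-insertion at a white vertex followed by the same blackening and contraction procedure, so this step is essentially formal once the bijection is in place. The substantive obstacle is showing that $\Psi$ produces a bona fide element of $B_{m,n}$ together with an $(m-1)$-dimensional $V\subset W$ inside it, which reduces, using Remarks \ref{remarkallmdimensional} and \ref{remarkbijections}, to verifying the dimensionality conditions at each iteration of the $\downarrow$-construction. This is precisely the point at which the bound $n-m\le 2$ is essential.
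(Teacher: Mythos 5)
Your construction of the inflation map $\Psi$ is essentially the paper's: take $\tilde b = b$, $\tilde W=$ all vertices when $m=n$; add unary white vertices on leaves not above $V$ when $m=n-1$; do the same one level down via the $\downarrow$-construction and pull back with Lemma~\ref{lemmainsertingtrunk} when $m=n-2$. So far you are in line with the paper.

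The genuine gap is the claim that $\Phi$ and $\Psi$ are mutually inverse bijections between the indexing sets. The blackening map $\Phi$ is \emph{not} injective: for instance, with $m=n=2$ take $b$ a linear tree on four unary vertices $v_1<v_2<v_3<v_4$ with $W=\{v_1,\dots,v_4\}$ and $V=\{v_2\}$, and also take $b'$ the unary corolla with $V'$ its unique vertex. Both blacken (after removing unary black vertices and contracting adjacent black ones, exactly the $\mathbf{Bimod}_{1,2}$-multiplication you invoke) to the unary corolla with its single white vertex. So $\Phi$ collapses many triples $((b,W),V)$ onto the same $(b',V')$, and $\Psi\Phi\neq\mathrm{id}$; $\Psi$ is merely a section of $\Phi$. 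The categories of $\zeta$--$\zeta$-bimodules and of $(m-1,n)$-bimodules are not shown isomorphic by exhibiting a bijection of operation sets; rather one must show that a $\zeta$--$\zeta$-bimodule structure on a collection $C$ is \emph{determined} by its structure maps $\mu_{(\tilde b,\tilde W),V}$ for $(\tilde b,\tilde W,V)$ in the image of $\Psi$, because for $A=B=\zeta$ the associativity/unitality axioms force $\mu_{(b,W),V}=\mu_{\Psi\Phi(b,W,V)}$ whenever $\Phi$ identifies them. This is the content hiding behind the paper's ``it is easy to see''; it is exactly the step your argument omits, and the step on which the isomorphism actually rests. Your remark that ``this step is essentially formal once the bijection is in place'' does not apply, because the bijection is not in place.

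A secondary, smaller point: in the $m=n-1$ case you say the extra white vertices are ``uniquely determined,'' but they are not determined by the data $(b',V')$ alone -- they are chosen, namely as unary vertices added on the leaves not lying above a $V$-vertex, as the paper spells out. You also suggest using Lemma~\ref{lemmainsertingtrunk} already at $m=n-1$, whereas that lemma (which lifts unary-vertex insertions from $b^{\downarrow}$ to $b$) is needed only when one works a level down, i.e.\ at $m=n-2$.
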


\begin{proof}
	Let $(b,V) \in B_{m-1,n}$. We will construct $(\tilde{b},\tilde{W}) \in B_{m,n}$ such that $\tilde{W}$ contains an $m-1$-dimensional subset which is in bijection with $V$.
	\begin{itemize}
		\item If $m=n$, we take $\tilde{b}=b$ and $\tilde{W}$ as the set of vertices of $b$.
		\item If $m=n-1$, $\tilde{b}$ is the tree obtained from $b$ by adding a unary vertex on each leaf which is not above a vertex of $V$. $\tilde{W}$ is the union of $V$ and all the unary vertices which have been added.
		\item If $m=n-2$, let $(b^\downarrow,V^\downarrow)$ be the pair obtained from $(b,V)$ by applying the construction of Definition \ref{constructiontree}. Let $(\tilde{b}^\downarrow,\tilde{W}^\downarrow)$ be the tree obtained from $(b^\downarrow,V^\downarrow)$ by adding unary vertices as in the case $m=n-1$. We take $(\tilde{b},\tilde{W})$ given by Lemma \ref{lemmainsertingtrunk}.
	\end{itemize}
	It is easy to see that the $\zeta-\zeta$-bimodule structure map induced by $(\tilde{b},\tilde{W})$ corresponds to the $(m-1,n)$-bimodule structure induced by $(b,V)$.
\end{proof}

\subsection{Pointed bimodules over $(m,n)$-bimodules}

\begin{definition}\label{defpointedbimod}
	For $0 < m \leq n$ and $A$ and $B$ two $(m,n)$-bimodules in $\mathcal{E}$, a \emph{pointed $A-B$-bimodule} $C$ is given by
	\begin{itemize}
		\item a collection of objects $(C_i)_{i \in I_n}$,
		\item for all $(b,W) \in B_{m,n}$ and partitions of $W$ into $V_-$, $V_+$ and $V$ such that there is an $m-1$-dimensional subset $U \subset W$ satisfying $U_- \subset V_-$ and $U_+ \subset V_+$, a map
		\[
			A_{s(V_-)} \otimes C_{s(V)} \otimes B_{s(V_+)} \to C_{t(b)},
		\]
	\end{itemize}
	satisfying associativity and unitality axioms.
\end{definition}

\begin{lemma}\label{lemmaalpha}
	For $0 < m \leq n$ and $A$ and $B$ two $(m,n)$-bimodules in $\mathcal{E}$, there is an $A-B$-bimodule $\alpha$ such that the category of pointed $A-B$-bimodules is isomorphic to the comma category of $A-B$-bimodules under $\alpha$.
\end{lemma}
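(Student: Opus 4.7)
I will construct $\alpha$ explicitly and verify that a pointed $A-B$-bimodule structure on $C$ corresponds precisely to an $A-B$-bimodule map $\alpha \to C$. The key observation is that the extra operations in Definition~\ref{defpointedbimod} that do not appear in Definition~\ref{defbimodoverbimod}---those with $V \subsetneq U$---admit a canonical decomposition into an ordinary $A-B$-bimodule operation on $C$ (handling the $V$-vertices) and a ``pure pointing'' operation (handling the promoted vertices in $U \setminus V$) that does not involve $C$ at all.

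Concretely, I define $\alpha$ as the $A-B$-bimodule freely generated by formal operations
\[
	\mu_{b, W, V_-, V_+} : A_{s(V_-)} \otimes B_{s(V_+)} \to \alpha_{t(b)},
\]
one for each $(b, W) \in B_{m,n}$ and each partition $W = V_- \sqcup V_+$ admitting an $m-1$-dimensional $U \subset W$ with $U_- \subset V_-$ and $U_+ \subset V_+$, modulo the relations ensuring compatibility with the $(m,n)$-bimodule structure maps of $A$ and $B$. By construction, $A-B$-bimodule maps $\alpha \to D$ are in natural bijection with families $\{f_{b, W, V_-, V_+}\}$ of such maps into $D$ compatible with the $A$- and $B$-actions on $D$.

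With this $\alpha$ in hand, the forward functor sends a pointed bimodule $C$ to itself, equipped with the map $\alpha \to C$ read off from the $V = \emptyset$ pointed structure maps of $C$. The inverse functor takes an $A-B$-bimodule $D$ under $\alpha$, say $f : \alpha \to D$, and equips $D$ with a pointed structure: for $(b, W, V_-, V, V_+)$ with $V \subset U$, the pointed structure map is obtained by first applying $f$ to handle the promoted vertices in $U \setminus V$, producing auxiliary inputs in $D$, and then applying the $A-B$-bimodule operation of $D$ along the remaining $V$-vertices. The main obstacle is verifying that the pointed structure so defined satisfies the unit and associativity axioms of Definition~\ref{defpointedbimod} and that the two functors are mutually inverse. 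This amounts to a unique decomposition statement: every pointed operation factors canonically, as above, through the $m-1$-dimensional layer $U$. The argument is combinatorial, tracking partitions $(V_-, V, V_+)$ of $W \in B_{m,n}$ in the spirit of Lemma~\ref{lemmamminusonedimensional}, and invoking the $(m,n)$-bimodule axioms on $A$ and $B$ to ensure that the decomposition is independent of the auxiliary choice of $U$.
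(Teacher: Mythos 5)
Your approach is essentially the paper's: $\alpha$ is the $A$-$B$-bimodule determined by the $V=\varnothing$ pointed operations (the paper packages this as the underlying $A$-$B$-bimodule of the initial pointed $A$-$B$-bimodule, which makes the forward direction a one-line appeal to initiality and sidesteps the need to verify a generators-and-relations presentation), and the general pointed structure map is recovered from the map $\alpha\to C$ by exactly the composite through the $m-1$-dimensional $U$ that the paper writes down, using the induced maps $A\to C\leftarrow B$ on the factors indexed by $U\setminus V$ followed by the ordinary $A$-$B$-bimodule operation indexed by $U$. One small correction: in that composite the bimodule operation of $C$ is the one with $C$-slots at all of $U$, not merely at $V$, so ``applying the $A$-$B$-bimodule operation of $D$ along the remaining $V$-vertices'' should read ``along $U$''; otherwise the decomposition, and the verifications you flag as remaining (independence of $U$, preservation of the axioms, mutual inversity), track the paper's argument.
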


\begin{proof}
	It is obvious that there is a forgetful functor from pointed $A-B$-bimodules to $A-B$-bimodules. Indeed, if one takes $V=U$ in the definition of pointed $A-B$-bimodule, one gets the structure maps for $A-B$-bimodules. Let $\alpha$ be the image of the initial pointed $A-B$-bimodule through this forgetful functor. If $C$ is a pointed $A-B$-bimodule, then it is an $A-B$-bimodule equipped with a map from $\alpha$. Now assume that $C$ is an $A-B$-bimodule equipped with a map from $\alpha$. Note that $\alpha$, being a pointed $A-B$-bimodule, is equipped with maps $A \to \alpha \leftarrow B$ of collections, therefore $C$ is also equipped with such maps. So $C$ has a pointed bimodule structure given by the composite
	\[
		A_{s(V_-)} \otimes C_{s(V)} \otimes B_{s(V_+)} \to A_{s(U_-)} \otimes C_{s(U)} \otimes B_{s(U_+)} \to C_{t(b)},
	\]
	where the first map is given from the maps $A \to C \leftarrow B$ and the second is from the $A-B$-bimodule structure.
\end{proof}

\section{Cofinality}\label{sectioncofinality}

\subsection{Statement of the result}

In this section, we assume $0 < m \leq n$ are fixed. We will closely follow \cite[Section 3]{deleger}. Let $\mathbf{Bimod}_{m,n}^{\bullet+\bullet}$ be the polynomial monad for triples $(A,B,C)$ where $A$ and $B$ are $(m,n)$-bimodules and $C$ is a pointed $A-B$-bimodule. Let $\mathbf{Bimod}_{m,n}^{\bullet \to \bullet \leftarrow \bullet}$ be the polynomial monad for cospans $A \to C \leftarrow B$ of $(m,n)$-bimodules.

\begin{theorem}\label{thmcofinalcospan}
	There is a homotopically cofinal map of polynomial monads
	\begin{equation}\label{cofinalmappolymon}
		f: \mathbf{Bimod}_{m,n}^{\bullet+\bullet} \to \mathbf{Bimod}_{m,n}^{\bullet \to \bullet \leftarrow \bullet}.
	\end{equation}
\end{theorem}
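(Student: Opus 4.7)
The plan is to follow the strategy of \cite[Section 3]{deleger}, which handles the analogous cofinality statement for ordinary bimodules, and to adapt it to the richer combinatorics of $(m,n)$-bimodules developed in Section \ref{sectionbimodules}.

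First, I would make the classifier $T^f$ induced by $f$ explicit via formula \ref{formulaclassifier}. On objects, it should consist of trees $(b,W)\in B_{m,n}$ together with two compatible overlays: a three-part partition $W = V_- \sqcup V \sqcup V_+$ (encoding a cospan operation at the target, with the three parts corresponding to inputs in $A$, $C$, $B$), and a choice of $m{-}1$-dimensional subset $U \subset W$ with $U_-\subset V_-$ and $U_+ \subset V_+$ (witnessing the pointed bimodule operation at the source, in the sense of Definition \ref{defpointedbimod}). Morphisms are generated by insertion of pointed-bimodule operations into $V$-vertices together with contractions of black edges and removal of unary black vertices, all preserving the two pieces of data.

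Next, I would try to contract $N(T^f)$ by first reducing along the forgetful functor to the classifier that records only the cospan partition, then showing that each fiber is contractible. The natural candidate for an initial object in each such fiber is the canonical $U$ obtained by starting from an $m{-}1$-dimensional configuration at the bottom of the $\downarrow$-tower (where Definition \ref{definitionmdimensionalsetofvertices} simplifies dramatically) and lifting back up using Lemma \ref{lemmainsertingtrunk}. The bijections of Remark \ref{remarkbijections} guarantee that this lift is well defined, and compatibility with insertion and contraction follows from checking that both operations commute appropriately with the $\downarrow$-construction. Once initiality is established, Quillen's Theorem A -- or equivalently a zig-zag of natural transformations from the identity functor on $T^f$ to the constant functor at the canonical object -- yields contractibility of $N(T^f)$.

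The main obstacle will be the combinatorial verification that the canonical $U$ really is initial in its fiber: one has to control how $m{-}1$-dimensionality interacts with contractions of edges at the boundary between black and white vertices, and ensure that the cospan partition $(V_-, V, V_+)$ transforms coherently under insertions of pointed-bimodule operations. For the extremal cases $m = n$ and $m = 1$, this essentially reduces to the verifications already carried out in \cite{batanindeleger, deleger} (compare Remark \ref{examplelowercases}); the intermediate cases genuinely require the dimension-shifting machinery of Section \ref{sectionbimodules}, and the inductive step in the proof of Lemma \ref{lemmamminusonedimensional} suggests that a parallel induction on $n-m$, together with the trunk-insertion argument of Lemma \ref{lemmainsertingtrunk}, is the right way to organise the proof.
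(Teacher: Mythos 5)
Your proposal captures the high-level idea---analyze the classifier $T^f$ via a forgetful functor to something simpler and control its fibers---but it has two substantive gaps compared to the paper's argument.

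First, a factual slip in the classifier description: the objects of $T^f = (\mathbf{Bimod}_{m,n}^{\bullet \to \bullet \leftarrow \bullet})^{\mathbf{Bimod}_{m,n}^{\bullet+\bullet}}$ are operations of the \emph{cospan} monad $\mathbf{Bimod}_{m,n}^{\bullet \to \bullet \leftarrow \bullet}$, i.e.\ pairs $(b,W) \in B_{m,n}$ decorated with a target label and source labels in $\{A,B,C\}$ subject only to the $A$/$B$ compatibility constraints, with \emph{no} requirement of an $m{-}1$-dimensional subset $U$. The pointed-bimodule constraint (existence of such a $U$) governs the morphisms, not the objects. Your description---where objects carry both the cospan partition $(V_-,V,V_+)$ and a witness $U$---conflates the operations of the source and target monads of $f$ and is not the right starting point.

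Second, and more seriously, the proof plan is incomplete as a matter of principle. Reducing along a forgetful functor $F$ to a smaller classifier and showing each fiber has an initial object does \emph{not} imply $N(T^f)$ is contractible, even if the base has contractible nerve: contractible fibers are not sufficient for Quillen's Theorem A (which needs the comma categories $y/F$ to be contractible), and nothing in your plan bridges fibers to comma categories. This is exactly the gap the paper fills by proving $F$ is \emph{smooth} via Cisinski's lemma (Lemma \ref{cisinskilemma}). Cisinski's criterion replaces comma-category contractibility by contractibility of ``lifting categories,'' which the paper then analyzes combinatorially: Lemma \ref{claimcontractibleliftingcategory} handles, by induction on tree size, the building-block categories $\mathcal{C}(b)$ of labelings with an $m{-}1$-dimensional ``frontier''; Lemma \ref{lemmacontractibleliftingcategoryinparticularcase} identifies general lifting categories with singleton $W_1$ as such $\mathcal{C}(b')$ or as trivial categories; and Lemma \ref{lemmasmooth} decomposes general lifting categories as products over white vertices. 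Your induction on $n-m$ via the $\downarrow$-construction and Lemma \ref{lemmainsertingtrunk} is plausible as an alternative way to analyze $\mathcal{C}(b)$-type categories, but without the smoothness step the overall argument does not go through. Also note that the paper's fibers have a \emph{terminal} (not initial) object---the one where all source labels agree with the target label---and the terminal object you would want to construct via your lifting argument would need to live in the lifting category, not the fiber.
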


\subsection{Description of the map of polynomial monads}

The polynomial monad $\mathbf{Bimod}_{m,n}^{\bullet+\bullet}$ is given by
\begin{equation}\label{polynomialpointed}
	\xymatrix{
		\{A,B,C\} \times I_n & E_{m,n}^{\bullet+\bullet} \ar[l] \ar[r] & B_{m,n}^{\bullet+\bullet} \ar[r] & \{A,B,C\} \times I_n
	}
\end{equation}
where elements of $B_{m,n}^{\bullet+\bullet}$ are pairs $(b,W) \in B_{m,n}$, equipped with a label in $\{A,B,C\}$ called \emph{target label} and for each white vertex a label in $\{A,B,C\}$ called \emph{source label}, subject to the following restrictions. If the target label is $A$ (resp. $B$), then all the source labels are also $A$ (resp. $B$). If the target label is $C$, there must be an $m-1$-dimensional subset $V \subset W$ such that all the vertices in $V_-$ have label $A$ and all the vertices in $V_+$ have label $B$. $E_{m,n}^{\bullet+\bullet}$ is the set of pairs $(b,W)$ of $B_{m,n}^{\bullet+\bullet}$ with one white vertex marked. Note that there is a projection from \ref{polynomialpointed} to \ref{polynomialopetopicsequence}. The source map is given by the source label of the marked vertex and an element in $I_n$ thanks to this projection. Similarly, the target map is given by the target label and an element in $I_n$ thanks to this projection. The middle map forgets the marking. Multiplication is given by insertion of a tree inside a vertex, if the source and target labels correspond.

The description of the polynomial monad $\mathbf{Bimod}_{m,n}^{\bullet \to \bullet \leftarrow \bullet}$ is completely similar. The difference is that we drop the condition that there must be an $m-1$-dimensional subset $V \subset W$ such that all the vertices in $V_-$ have label $A$ and all the vertices in $V_+$ have label $B$. The map $f$ in \ref{cofinalmappolymon} is given by inclusion of sets.

\subsection{Construction of a smooth functor}

For a functor $F: \mathcal{X} \to \mathcal{Y}$ between categories and $y \in \mathcal{Y}$, we will write $F_y$ for the fibre of $F$ over $y$.

\begin{definition}
	A functor $F : \mathcal{X} \to \mathcal{Y}$ is \emph{smooth} if, for all $y \in \mathcal{Y}$, the canonical functor
	\[
		F_y \to y/F
	\]
	induces a weak equivalence between nerves.
\end{definition}

Let us state the Cisinski lemma \cite[Proposition 5.3.4]{cisinski}:

\begin{lemma}\label{cisinskilemma}
	A functor $F : \mathcal{X} \to \mathcal{Y}$ is smooth if and only if for all maps $f_1 : y_0 \to y_1$ in $\mathcal{Y}$ and objects $x_1$ in $\mathcal{X}$ such that $F(x_1) = y_1$, the nerve of the \emph{lifting category} of $f_1$ over $x_1$, whose objects are arrows $f : x \to x_1$ such that $F(f) = f_1$ and morphisms are commutative triangles
	\[
		\xymatrix{
			x \ar[rr]^g \ar[rd]_f && x' \ar[ld]^{f'} \\
			& x_1
		}
	\]
	with $g$ a morphism in $F_{y_0}$, is contractible.
\end{lemma}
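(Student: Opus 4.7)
The plan is to identify the lifting category of the statement with an over-category of the fibre inclusion $\iota_y : F_y \hookrightarrow y/F$, and then apply Quillen's Theorem A (for the easy direction) and Quillen's Theorem B (for the converse).

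First I would unravel definitions. For any $y \in \mathcal{Y}$ and $(x, g : y \to F(x)) \in y/F$, an object of the over-category $\iota_y/(x, g)$ is a pair consisting of $a \in F_y$ and a morphism $h : (a, \mathrm{id}_y) \to (x, g)$ in $y/F$. Such an $h$ amounts to a morphism $h : a \to x$ in $\mathcal{X}$ satisfying $F(h) = g$, while morphisms in this over-category correspond to morphisms in $F_y$ (equivalently, morphisms in $\mathcal{X}$ sent by $F$ to $\mathrm{id}_y$) making the evident triangle commute. This is precisely the lifting category of $g$ over $x$, in the sense of the statement, with $y_0 = y$, $y_1 = F(x)$, $f_1 = g$, $x_1 = x$. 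As the pair $(y, (x, g))$ varies, every lifting category appearing in the statement arises in this manner.

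The direction $(\Leftarrow)$ then follows immediately from Quillen's Theorem A: contractibility of every $N(\iota_y/(x, g))$ forces $N(\iota_y)$ to be a weak equivalence for each $y \in \mathcal{Y}$, which is the definition of smoothness.

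For the converse $(\Rightarrow)$, which is subtler since Theorem A admits no general converse, the plan is to exhibit each $N(\iota_y/(x, g))$ as the homotopy fibre of $N(\iota_y)$ over $(x, g)$ by means of Quillen's Theorem B. Smoothness of $F$, i.e.\ $N(\iota_y)$ being a weak equivalence, would then force every such homotopy fibre to be contractible. The main obstacle I anticipate is the verification of the hypothesis of Theorem B, namely that the comparison functors between the over-categories $\iota_y/(x, g)$ induced by morphisms of $y/F$ are themselves weak equivalences; this is the analytic heart of Cisinski's treatment, and is typically carried out inside the calculus of smooth and proper morphisms of prederivators.
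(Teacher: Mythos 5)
The paper does not prove this statement: it is cited verbatim as Cisinski's Proposition 5.3.4, so there is no in-paper argument to compare against, and your attempt must be judged on its own terms. Your structural setup is correct: the lifting category of $f_1 : y_0 \to y_1$ over $x_1$ is precisely the slice $\iota_{y_0}/(x_1,f_1)$ of the fibre-inclusion functor $\iota_{y_0} : F_{y_0} \to y_0/F$, and the direction $(\Leftarrow)$ then follows at once from Quillen's Theorem A. This is, incidentally, the only direction the paper uses: in Lemma \ref{lemmasmooth} one establishes contractibility of the lifting categories and reads off smoothness.

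The direction $(\Rightarrow)$, however, is not proved, and you flag the gap yourself. Quillen's Theorem B requires as a hypothesis that every morphism $(x,g) \to (x',g')$ of $y/F$ induce a weak equivalence $\iota_y/(x,g) \to \iota_y/(x',g')$. That condition is not a formal consequence of $N(\iota_y)$ being a weak equivalence --- the converse of Theorem A fails in general, and the special shape of $\iota_y$ does not visibly rescue it --- so invoking Theorem B is not a reduction of the problem but essentially a restatement of it. Cisinski's own argument does not proceed via Theorem B; it runs through the calculus of aspherical and locally aspherical functors and the interplay between smooth and proper maps, and that machinery is what would have to be imported to close the gap. As it stands you have a complete proof of $(\Leftarrow)$ and an outline of $(\Rightarrow)$ whose key lemma is assumed rather than established.
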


We have a commutative square of polynomial monads
\[
	\xymatrix{
		\mathbf{Bimod}_{m,n}^{\bullet+\bullet} \ar[r]^-{uf} \ar[d]_-f & \mathbf{Bimod}_{m,n} \ar[d]^-{id} \\
		\mathbf{Bimod}_{m,n}^{\bullet \to \bullet \leftarrow \bullet} \ar[r]_-u & \mathbf{Bimod}_{m,n}
	}
\]
where $u$ is given by projection. This square induces a morphism of algebras \cite[Proposition 4.7]{batanindeleger}
\begin{equation}\label{inducedfunctor}
F: (\mathbf{Bimod}_{m,n}^{\bullet \to \bullet \leftarrow \bullet})^{\mathbf{Bimod}_{m,n}^{\bullet+\bullet}} \to u^* \left( (\mathbf{Bimod}_{m,n})^{\mathbf{Bimod}_{m,n}} \right).
\end{equation}
To simplify the notations, we will write $\mathcal{X}$ and $\mathcal{Y}$ for the domain and codomain of $F$ respectively.

\subsection{Proof of cofinality}

\begin{lemma}\label{claimcontractibleliftingcategory}
	For a tree $b \in B_m$, let us consider the category $\mathcal{C}(b)$ whose objects are decorations of the vertices of $b$ by labels in $\{A,B,C\}$, such that there is an $m-1$-dimensional subset for which the vertices below have label $A$ and above have label $B$. The morphisms turn vertices with label $A$ or $B$ to vertices with label $C$. This category has contractible nerve.
\end{lemma}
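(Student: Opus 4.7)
My plan is to prove contractibility of $N(\mathcal{C}(b))$ by induction on the number of vertices of $b$, decomposing $\mathcal{C}(b)$ according to the label assigned to the root of $b$. In the base case where $b$ has a single vertex, that vertex by itself is the only $(m-1)$-dimensional subset and it witnesses every decoration, so $\mathcal{C}(b)$ is the three-object category $\{A,B,C\}$ with the two non-identity morphisms $A\to C$ and $B\to C$; its nerve is two edges meeting at $C$, which is contractible.

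For the inductive step, assume $b$ has at least two vertices; let $r$ be its root and let $b_1,\ldots,b_k$ be the subtrees rooted at the children of $r$. Write $\mathcal{C}^X(b)$ for the full subcategory of $\mathcal{C}(b)$ consisting of decorations $\lambda$ with $\lambda(r)=X$, for $X\in\{A,B,C\}$. Restriction of decorations to the subtrees induces an isomorphism $\mathcal{C}^A(b)\cong\prod_{i=1}^k\mathcal{C}(b_i)$: any witnessing cut $U$ of $\lambda\in\mathcal{C}^A(b)$ other than $\{r\}$ decomposes as a disjoint union of cuts $U\cap V(b_i)$ of the subtrees, so $\lambda$ restricts to a valid decoration of each $b_i$; if the only witnessing cut is $\{r\}$, then every non-root vertex must be labeled $B$ and the restrictions $\lambda|_{b_i}$ are still valid via the root cuts of the $b_i$; conversely, the disjoint union $\bigcup_i U'_i$ of witnesses for a tuple $(\mu_i)\in\prod_i\mathcal{C}(b_i)$ witnesses the corresponding decoration on $b$ with root labeled $A$. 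By the inductive hypothesis each $N(\mathcal{C}(b_i))$ is contractible, hence so is $N(\mathcal{C}^A(b))=\prod_i N(\mathcal{C}(b_i))$.

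A decoration with $\lambda(r)=B$ can be witnessed only by the cut $\{r\}$ (any other cut puts $r$ into $U_-$, requiring the root to be labeled $A$), which forces every non-root vertex to be labeled $B$; so $\mathcal{C}^B(b)=\{\lambda_B\}$ is a singleton, and an analogous argument gives $\mathcal{C}^C(b)=\{\lambda_C\}$, with $\lambda_C$ differing from $\lambda_B$ only in having $C$ at the root. A direct check of the morphism condition in $\mathcal{C}(b)$ shows that the only non-identity morphisms between the three pieces are $\lambda_0\to\lambda_C$ and $\lambda_B\to\lambda_C$, where $\lambda_0\in\mathcal{C}^A(b)$ is the decoration with the root labeled $A$ and every other vertex labeled $B$ (corresponding under the product isomorphism to the tuple of all-$B$ labelings of the $b_i$). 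Therefore $N(\mathcal{C}(b))$ is obtained from the contractible space $N(\mathcal{C}^A(b))$ by adjoining the vertex $\lambda_C$ and the edge $\lambda_0\to\lambda_C$, and then adjoining the vertex $\lambda_B$ and the edge $\lambda_B\to\lambda_C$; each such attachment preserves the homotopy type, so $N(\mathcal{C}(b))$ is contractible.

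The most delicate point I expect is the identification $\mathcal{C}^A(b)\cong\prod_i\mathcal{C}(b_i)$ at the level of witnessing cuts: functoriality is immediate from the vertex-wise description of morphisms, but the bijection on objects requires checking both directions of the cut decomposition sketched above.
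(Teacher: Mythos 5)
Your proof is correct and essentially reproduces the paper's argument: both proceed by induction on the number of vertices, identify the full subcategory of decorations with root labeled $A$ as the product $\prod_i\mathcal{C}(b_i)$ over the subtrees attached above the root (contractible by induction), observe that all other objects have every non-root vertex forced to $B$ and thus form a small cospan, and conclude by a gluing argument. The only differences are cosmetic: you take the one-vertex corolla as base case where the paper takes the free living edge, and you present the gluing as attaching two edges $\lambda_0\to\lambda_C$ and $\lambda_B\to\lambda_C$ one at a time, whereas the paper phrases the same gluing as a union of two overlapping full subcategories $\mathcal{A}$ (root labeled $A$) and $\mathcal{B}$ (all non-root vertices labeled $B$) with terminal intersection.
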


\begin{proof}
	We proceed by induction on the number of vertices of $b$. If $b$ has no vertices, that is the free living edge, then $\mathcal{C}(b)$ is the terminal category. Now assume $b$ has at least one vertex. Let $\mathcal{A}$ be the full subcategory of $\mathcal{C}(b)$ containing the trees for which the root vertex has label $A$. Let $\mathcal{B}$ be the full subcategory of $\mathcal{C}(b)$ containing the trees for which all the vertices which are not the root have label $B$. The union of $\mathcal{A}$ and $\mathcal{B}$ gives the full category, the intersection is the terminal category, $\mathcal{B}$ consists of a cospan. The category $\mathcal{A}$ is isomorphic to $\prod_{e \in E} \mathcal{C}(b(e))$, where $E$ is the set of edges directly above the root and $b(e)$ is the maximal subtree of $b$ having $e$ as root. So the nerve of $\mathcal{A}$ is contractible by induction.
\end{proof}

Note that an object of $\mathcal{X}$ is an element of the set of operations of the polynomial monad $\mathbf{Bimod}_{m,n}^{\bullet \to \bullet \leftarrow \bullet}$, which in particular gives us a pair $(b,W) \in B_{m,n}$.

\begin{lemma}
	\label{lemmacontractibleliftingcategoryinparticularcase}
	Let $f_1: y_0 \to y_1$ be a map in $\mathcal{Y}$ and $x_1 = (b_1,W_1) \in \mathcal{X}$ such that $F(x_1) = y_1$. If $W_1$ is a singleton, then the nerve of the lifting category $\mathcal{X}(x_1,f_1)$ of $f_1$ over $x_1$ is contractible.
\end{lemma}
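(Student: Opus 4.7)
The plan is to unpack the lifting category concretely using the classifier structure, and then identify it with the category $\mathcal{C}(\tau^{\downarrow(n-m)})$ from Lemma \ref{claimcontractibleliftingcategory}.

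First, since $W_1 = \{v_1\}$ is a singleton, the nested tree representing $f_1 : y_0 \to y_1 = F(x_1)$ has a single nest $t_1 = (\tau, W_\tau) \in B_{m,n}$ sitting at $v_1$, and $y_0$ is the result of inserting $t_1$ into $v_1$. Let $\ell_s \in \{A,B,C\}$ denote the source label of $v_1$ in $x_1$. An object of $\mathcal{X}(x_1,f_1)$ then corresponds to a lift of $t_1$ to an operation of $\mathbf{Bimod}_{m,n}^{\bullet+\bullet}$ with target label $\ell_s$, i.e.\ a choice of source labels on $W_\tau$ by elements of $\{A,B,C\}$ subject to the $\bullet+\bullet$ constraint. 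Explicitly: if $\ell_s = A$ or $\ell_s = B$, all source labels are forced to equal $\ell_s$; if $\ell_s = C$, we require the existence of an $m-1$-dimensional $V \subset W_\tau$ with $V_-$ labelled $A$ and $V_+$ labelled $B$ (the vertices of $V$ being labelled freely).

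Next I would describe the morphisms. A morphism $g : x \to x'$ of $\mathcal{X}$ with $F(g) = \mathrm{id}_{y_0}$ is represented in the classifier by a nested tree on $x'$ with trivial (single-vertex) nests. Parsing the $\mathbf{Bimod}_{m,n}^{\bullet+\bullet}$ condition on each such single-vertex nest, one checks that such a $g$ exists (and is then unique) if and only if, at each $w \in W_\tau$, the label of $w$ in $x$ equals that in $x'$ unless the label in $x'$ is $C$, in which case the label in $x$ is arbitrary. The lifting category is therefore a poset of valid labellings ordered by ``upgrade $A$ or $B$ labels to $C$''.

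Finally I would identify the category. In the cases $\ell_s = A$ and $\ell_s = B$ the poset is terminal, hence has contractible nerve. In the case $\ell_s = C$, the iterated $\downarrow$-construction (Remark \ref{remarkbijections}) provides a bijection between $W_\tau$ and the vertex set of $\tau^{\downarrow(n-m)} \in B_m$ which, by iterating the argument in the proof of Lemma \ref{lemmamminusonedimensional}, matches $m-1$-dimensional subsets on both sides. Under this identification the lifting category is isomorphic to $\mathcal{C}(\tau^{\downarrow(n-m)})$, whose nerve is contractible by Lemma \ref{claimcontractibleliftingcategory}. The main obstacle is a careful bookkeeping step: justifying that morphisms of the lifting category are exactly label upgrades requires unwinding the $\bullet+\bullet$ constraint on the (slightly degenerate) trivial nests, especially verifying that when the outer label is $C$ the inner label is unconstrained. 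Once that is in place, everything else reduces to the combinatorics already encoded in Lemma \ref{claimcontractibleliftingcategory}.
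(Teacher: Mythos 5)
Your proposal follows essentially the same route as the paper: unpack the objects and morphisms of the lifting category as labellings of the nest's white vertices with ``upgrade-to-$C$'' morphisms, dispose of the $A$ and $B$ target cases as trivial, and identify the $C$ case with the category $\mathcal{C}(-)$ of Lemma \ref{claimcontractibleliftingcategory} applied to the tree in $B_m$ obtained by iterating the $\downarrow$-construction. The only cosmetic differences are that the paper phrases the identification in terms of $(b_0,W_0)=y_0$ rather than the nest $\tau$ (which yields the same tree after applying $\downarrow^{(n-m)}$, since $b_1^{\downarrow(n-m)}$ is a corolla) and singles out the case where $W_0$ is itself a singleton, which your argument subsumes.
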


\begin{proof}
	Let us prove that the lifting category is trivial or isomorphic to a category of Lemma \ref{claimcontractibleliftingcategory}. First, we will describe the functor $F: \mathcal{X} \to \mathcal{Y}$ more explicitly. As mentioned above, the objects of $\mathcal{X}$ are operations of the polynomial monad $\mathbf{Bimod}_{m,n}^{\bullet \to \bullet \leftarrow \bullet}$. So, they are pairs $(b,W) \in B_{m,n}$, equipped with a target label and, for each white vertex, a source label in $\{A,B,C\}$ such that if the target label is $A$ (resp. $B$), then all the source labels are also $A$ (resp. $B$). The morphisms are given by nested trees, as in Subsection \ref{subsectionuniversalclassifierbimodmn}. It is important to note that  there are morphisms which turn vertices with label $A$ or $B$ to vertices with label $C$. The set of objects of $\mathcal{Y}$ is just $B_{m,n}$. Its set of morphisms can again be described in terms of nested trees. The functor $F$ forgets all the labels.
	
	Now let us describe the lifting category. If $W_1$ is a singleton, $x_1$ only depends on the label of the unique element of $W_1$. Let us write $y_0=(b_0,W_0) \in B_{m,n}$. The lifting category has as objects the pairs $(b,W) \in \mathcal{X}$ together with a morphism to $x_1$. We must have $(b_0,W_0) = (b,W)$ as elements of $B_{m,n}$, so the only degree of freedom is in the labels of the white vertices. Since there is a morphism to $x_1$, it means by definition of the category $\mathcal{X}$ that there is an $m-1$-dimensional subset of $W$ for which the vertices below have label $A$ and above have label $B$. The morphisms in the lifting category can only be the morphisms which turn vertices with label $A$ or $B$ to vertices with label $C$. If the label of $W_1$ is $A$ or $B$, then the lifting category is the terminal category. So the only non-trivial case is when the label of $W_1$ is $C$. If $W_0$ is a singleton, then the lifting category consists of a cospan. Otherwise, by definition of $m$-dimensional subset of vertices, there is $b' \in B_m$ obtained by iterating Construction \ref{constructiontree} from $(b_0,W_0)$ such that $p^{-1}(b_0)$ is in bijection with $W_0$. Then the lifting category is isomorphic to $\mathcal{C}(b')$. This concludes the proof.
\end{proof}

\begin{lemma}\label{lemmasmooth}
	The functor $F$ of \ref{inducedfunctor} is smooth.
\end{lemma}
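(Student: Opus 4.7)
The plan is to apply the Cisinski lemma (Lemma \ref{cisinskilemma}) and reduce the general lifting category to a product of lifting categories of the special form already handled by Lemma \ref{lemmacontractibleliftingcategoryinparticularcase}. Fix $f_1 : y_0 \to y_1$ in $\mathcal{Y}$ and $x_1 = (b_1,W_1) \in \mathcal{X}$ with $F(x_1) = y_1$. The morphism $f_1$, viewed in the classifier $(\mathbf{Bimod}_{m,n})^{\mathbf{Bimod}_{m,n}}$ described in Subsection \ref{subsectionuniversalclassifierbimodmn}, is encoded by a nested tree: for each white vertex $v \in W_1$ a nest $(b_v,W_v)$, and $y_0$ is obtained by inserting these nests into the corresponding vertices of $b_1$ and contracting adjacent black vertices. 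In particular, $W_0$ decomposes canonically as the disjoint union $\bigsqcup_{v \in W_1} W_v$.

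The first step is to describe the lifting category $\mathcal{X}(x_1,f_1)$ explicitly. Its objects are labellings of $y_0$ and of the intermediate nested tree that promote $f_1$ to a morphism in $\mathcal{X}$ with target $x_1$; its morphisms lie in $F_{y_0}$, i.e.\ they are nested trees whose underlying nesting is the identity on $y_0$, so they just replace some of the source labels $A$ or $B$ on the vertices of $W_0$ by $C$ (these are precisely the unary $A\to C$ and $B\to C$ operations of $\mathbf{Bimod}_{m,n}^{\bullet \to \bullet \leftarrow \bullet}$). Since the target label of $x_1$ is fixed and the source labels $\ell_v$ at the vertices $v \in W_1$ are fixed, both the objects and the morphisms are determined independently at each nest $(b_v,W_v)$: each local choice only constrains, and is only constrained by, $\ell_v$. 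This yields a canonical isomorphism
\[
\mathcal{X}(x_1,f_1) \;\cong\; \prod_{v \in W_1} \mathcal{X}_v,
\]
where $\mathcal{X}_v$ is the lifting category obtained by repeating the construction for the nest at $v$, regarded as a morphism whose target has the singleton white-vertex set $\{v\}$ and target label $\ell_v$.

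Each factor $\mathcal{X}_v$ is precisely a lifting category in which the target has singleton $W_1$, so Lemma \ref{lemmacontractibleliftingcategoryinparticularcase} applies and shows its nerve is contractible (trivial if $\ell_v \in \{A,B\}$, a cospan if $W_v$ is a singleton, and isomorphic to a category $\mathcal{C}(b')$ of Lemma \ref{claimcontractibleliftingcategory} otherwise). Since the nerve commutes with finite products and the product of contractible simplicial sets is contractible, the nerve of $\mathcal{X}(x_1,f_1)$ is contractible. By Lemma \ref{cisinskilemma}, this is exactly what is needed to conclude that $F$ is smooth.

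The main obstacle is justifying the product decomposition cleanly: one has to argue that a valid labelling of the full nested tree in $\mathcal{X}$ amounts to independent valid labellings of each sub-nest, and that the generating relabelling morphisms $A\to C$, $B\to C$ act locally at each vertex of $W_0$ without creating any global compatibility condition. This is where the absence of the $m-1$-dimensional subset condition in $\mathbf{Bimod}_{m,n}^{\bullet \to \bullet \leftarrow \bullet}$ is crucial: it is precisely what makes the labelling constraints on different nests $W_v$ mutually independent, so that no cross-vertex coupling appears in the product.
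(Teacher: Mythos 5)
Your proposal is correct and follows essentially the same route as the paper: reduce to the Cisinski criterion, decompose the lifting category as a product indexed by the white vertices of $x_1$ (each factor being a lifting category whose target has a singleton white-vertex set), apply Lemma \ref{lemmacontractibleliftingcategoryinparticularcase} to each factor, and conclude since nerves commute with products. Your write-up is more explicit than the paper's (which defers the decomposition argument to a cited reference), but the underlying argument is the same.
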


\begin{proof}
	The argument is the same as in \cite[Lemma 3.15]{deleger}. Let $f_1: y_0 \to y_1$ be a map in $\mathcal{Y}$ and $x_1 = (b_1,W_1)$ be an object in $\mathcal{X}$ such that $F(x_1) = y_1$. We want to prove that the lifting category of $f_1$ over $x_1$ has contractible nerve. For a white vertex $v \in W_1$, let $x_1^v$ given the corolla in $B_n$ corresponding to $v$. Let $y_1^v$ be the same corolla but without the labels and $f_1^v: y_0^v \to y_1^v$ be the restriction of $f_1$ for this corolla. The lifting category of $f_1$ over $x_1$ is isomorphic to the product over the white vertices $v$ of the lifting categories of $f_1^v$ over $x_1^v$. It has therefore contractible nerve thanks to Lemma \ref{lemmacontractibleliftingcategoryinparticularcase}. We conclude the proof using Lemma \ref{cisinskilemma}.
\end{proof}

\begin{proof}[Proof of Theorem \ref{thmcofinalcospan}]
	The functor $F$ of \ref{inducedfunctor} is smooth according to Lemma \ref{lemmasmooth}. Its fibres have contractible nerve, since they have a terminal object, which is the object where all the source labels are the same as the target label. Using Quillen Theorem A, we deduce that $F$ induces a weak equivalence between nerve. Again, the nerve of $\mathcal{Y}$ is contractible since this category has a terminal object. So the nerve of $\mathcal{X}$ is contractible, which concludes the proof.
\end{proof}

\section{Delooping theorems}\label{sectiondeloopingtheorems}

\subsection{General delooping for $(m,n)$-bimodules}

For $0 \leq m \leq n$, we will write $\mathrm{Bimod}_{m,n}$ for the category of $(m,n)$-bimodules. We will write $\zeta$ for the terminal object in this category.

\begin{definition}
	For $0 \leq m \leq n$, we will say $X \in \mathrm{Bimod}_{m,n}$ is \emph{multiplicative} if it is equipped with a map $\zeta \to X$.
\end{definition}

For $0 < m \leq n$ fixed, let $\kappa$ be the composite $I_{m-1} \to B_m \to B_n \to I_n$ where the first map picks the free living edge, the second map is obtained by applying the unit $n-m$ times, the last map is the target map $t_n$.

The objective for the rest of this paper is to determine whether, for a multiplicative $(m,n)$-bimodule $X$, there is a fibration sequence
\begin{equation}\label{fibrationsequence}
	\Omega \mathrm{Map}_{\mathrm{Bimod}_{m,n}} (\zeta,u^* X) \to \mathrm{Map}_{\mathrm{Bimod}_{m-1,n}} (\zeta, v^* X) \to \prod_{i \in I^{m-1}} X_{\kappa(i)},
\end{equation}
where $u^*$ and $v^*$ are the appropriate forgetful functors.

\begin{lemma}\label{lemmaleftproper}
	The following categories are left proper:
	\begin{itemize}
		\item the category of $(m,n)$-bimodules,
		\item the category of triples $(A,B,C)$, where $A$ and $B$ are $(m,n)$-bimodules and $C$ is an $A-B$-bimodule,
	\end{itemize}
\end{lemma}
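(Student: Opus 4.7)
The plan is to realise both categories as categories of algebras over polynomial monads and then invoke the general transfer machinery developed in \cite{batanindeleger}. The first category is by definition $\mathrm{Alg}_{\mathbf{Bimod}_{m,n}}(\mathcal{E})$, for $\mathcal{E}$ the ambient symmetric monoidal model category. For the second, I would introduce a polynomial monad, say $\mathbf{Bimod}_{m,n}^{A,B,C}$, whose operations are elements $(b,W) \in B_{m,n}$ equipped with a target label in $\{A,B,C\}$ and, for each white vertex, a source label in $\{A,B,C\}$, subject to: if the target label is $A$ (resp.\ $B$) then every source label is $A$ (resp.\ $B$); if the target label is $C$ then there exists an $m-1$-dimensional subset $V \subseteq W$ such that the vertices in $V_-$, $V$, $V_+$ carry the labels $A$, $C$, $B$ respectively. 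Source, middle, target maps and the multiplication are defined exactly as for $\mathbf{Bimod}_{m,n}^{\bullet+\bullet}$ in Section \ref{sectioncofinality}, and by direct comparison with Definition \ref{defbimodoverbimod} its algebras are precisely the triples $(A,B,C)$ with $C$ an $A$-$B$-bimodule.

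Once both categories are presented as $\mathrm{Alg}_T(\mathcal{E})$ for a polynomial monad $T$, left properness follows from the general transfer result recorded in \cite{batanindeleger} (in turn building on \cite{batanin}). The key input is that for a tame polynomial monad $T$, the free algebra functor $F_T$ admits a canonical filtration indexed by $T$-trees, so that pushouts in $\mathrm{Alg}_T(\mathcal{E})$ along generating cofibrations $F_T(f)$ are built stage by stage as pushouts in the underlying category of collections. Provided $\mathcal{E}$ is a left proper, cofibrantly generated symmetric monoidal model category satisfying the standing hypotheses used throughout the paper, each stage of this filtration preserves weak equivalences along cofibrations, and left properness lifts to $T$-algebras. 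In particular, both $\mathbf{Bimod}_{m,n}$ and $\mathbf{Bimod}_{m,n}^{A,B,C}$ fall within the tameness regime of \cite{batanindeleger}, since they have the same formal shape as the polynomial monads treated there.

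The only non-routine step is the combinatorial verification for the second category, namely that $\mathbf{Bimod}_{m,n}^{A,B,C}$ is genuinely a polynomial monad whose algebras match Definition \ref{defbimodoverbimod}. The argument parallels the case of $\mathbf{Bimod}_{m,n}^{\bullet+\bullet}$: insertion of a tree inside a white vertex respects the colour constraints because, by Lemma \ref{lemmamminusonedimensional}, the compatibility between an $m$-dimensional subset $W$ and an $m-1$-dimensional subset $V \subseteq W$ is stable under the $\downarrow$-construction, hence under contraction of black-vertex edges and the multiplication of operations. I expect this bookkeeping to be the main obstacle; once it is in place, left properness of both categories follows by a direct citation of the transfer theorem.
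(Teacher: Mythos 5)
Your overall strategy matches the paper's: present both categories as algebras over explicit polynomial monads, and then appeal to a transfer theorem stating that algebras over a ``good'' polynomial monad form a left proper model category. The description you give of the monad for triples (labelled target, labelled white vertices, with the $A/C/B$ constraint along an $(m-1)$-dimensional subset when the target label is $C$) is essentially correct and does reproduce Definition~\ref{defbimodoverbimod}.

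However, there is a genuine gap in the way you invoke the transfer theorem, and it is exactly where the paper's proof does all of its work. You assert that both monads ``fall within the tameness regime\dots since they have the same formal shape as the polynomial monads treated there.'' This is not something one can take for granted. Tameness (in the sense of \cite[Definition~6.19]{bataninberger}) is a nontrivial combinatorial property: one must show that the classifier for semi-free coproducts decomposes as a coproduct of categories each having a terminal object. The paper verifies this by explicitly describing those classifiers in terms of $X$- and $K$-coloured nested trees and identifying the local terminal objects, and the identification is different in the three regimes $m=n$, $m=n-1$, and $m<n-1$. None of this is ``the same formal shape'' as any single monad already handled in \cite{batanindeleger}; it must actually be checked, and that check constitutes the proof.

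More seriously, for the monad governing triples $(A,B,C)$ your tameness claim is simply false when $m < n-1$. In that range the relevant subcategory of the classifier is not discrete: there are nontrivial morphisms that change a vertex label from $A$ or $B$ to $C$, so the classifier is not a coproduct of categories with terminal objects. The paper sidesteps this by proving the weaker property of \emph{quasi}-tameness (\cite[Definition~4.11]{bwdquasitame}): the offending subcategory is shown to be final (via a left adjoint given by contracting edges) and to have contractible nerve (because the only remaining freedom is in the labels, reducing to the category of Lemma~\ref{claimcontractibleliftingcategory}), and then left properness is deduced from \cite[Theorem~4.17]{bwdquasitame}. Your proposal has no counterpart to this step, and asserting tameness where only quasi-tameness holds would give a wrong proof. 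You should also note that the tameness/quasi-tameness technology lives in \cite{bataninberger} and \cite{bwdquasitame}, not \cite{batanindeleger}; the latter is where homotopical cofinality is developed, which is a different ingredient used elsewhere in the paper.
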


\begin{proof}
	We want to prove that the polynomial monad for $(m,n)$-bimodules is tame \cite[Definition 6.19]{bataninberger}. By definition, we have to prove that the classifier for semi-free coproducts is a coproduct of categories with terminal object. The set of objects for the classifier is the set $B_{m,n}$ of Definition \ref{operationsbimodmn}, where the white vertices are also coloured with $X$ or $K$. The morphisms can be given by nested trees, as it was done in Subsection \ref{subsectionuniversalclassifierbimodmn}. For each vertex of the nested tree, if the vertex is $X$-coloured, the tree inside it can be any tree with all vertices $X$-coloured. If the vertex is $K$-coloured, the tree inside it must be the corolla with the only vertex $K$-coloured. When $m=n$, the local terminal objects are trees in $B_{m,n}$ with white vertices coloured by $X$ and $K$ such that adjacent vertices have different colours, and such that vertices incident to the root or to the leaves are $X$-coloured, as in \cite[Subsection 9.2]{bataninberger}. If $m<n$, let us pick an object of the classifier, that is a tree $(b,W) \in B_{m,n}$ with white vertices coloured by $X$ and $K$. Let $(b^\downarrow,W^\downarrow)$ be the tree obtained by applying the construction of Definition \ref{constructiontree}, with white vertices coloured by $X$ and $K$ as the corresponding vertices of $b$. A morphism from $(b,W)$ corresponds to the contraction of edges and insertion of unary vertices in $b^\downarrow$. If $m = n-1$, the tree $(b,W) \in B_{m,n}$ is again terminal when in $(b^\downarrow,W^\downarrow)$, the $X$-vertices and $K$-vertices alternate. If $m < n-1$, it is terminal when, in $(b^\downarrow,W^\downarrow)$, there are no vertices above an $X$-vertex and all vertices which are below an $X$-vertex are also below a $K$-vertex.
	
	Now let us prove that the polynomial monad for triples $(A,B,C)$, where $A$ and $B$ are $(m,n)$-bimodules and $C$ is an $A-B$-bimodule, is quasi-tame \cite[Definition 4.11]{bwdquasitame}. For $m=n$ and $m=n-1$, this can be done using the fact that the polynomial monad for $(m,n)$-bimodules is tame and applying \cite[Theorem 4.22]{bwdquasitame}. The strategy is completely similar as in the proof of \cite[Proposition 4.26]{bwdquasitame}. If $m < n-1$, the strategy is slightly different. Unfortunately, the subcategory of trees which do not have vertices above an $X$-vertex and such that all vertices which are below an $X$-vertex are also below a $K$-vertex, is not always discrete, because there might be non-trivial morphisms which turn vertices with label $A$ or $B$ to vertices with label $C$. However, it is still final subcategory. Indeed, the inclusion functor of this subcategory has a left adjoint given by the functor which automatically contracts the necessary edges. It also has a contractible nerve, because the only degree of freedom is in the labels of the vertices, so we get a category as in Lemma \ref{claimcontractibleliftingcategory}.
	
	Note that any tame polynomial monad is also quasi-tame \cite[Proposition 4.20]{bwdquasitame}. We get the conclusion from \cite[Theorem 4.17]{bwdquasitame}, which states that the category of algebras over a quasi-tame polynomial monad is left proper.
\end{proof}

\begin{lemma}\label{lemmaquillenequivalences}
	Let $D^0$ be a cofibrant replacement of $\zeta$. There is a Quillen equivalence between the category of pointed $D^0-D^0$-bimodules and the category of pointed $\zeta-\zeta$-bimodules.
\end{lemma}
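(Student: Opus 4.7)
The plan is to obtain the Quillen equivalence as a change-of-base along the weak equivalence $f : D^0 \to \zeta$, bootstrapping from the left properness established in Lemma \ref{lemmaleftproper}. First I would construct the adjunction $f_! \dashv f^*$ between pointed $D^0-D^0$-bimodules and pointed $\zeta-\zeta$-bimodules, where $f^*$ takes a pointed $\zeta-\zeta$-bimodule $C$ and restricts each structure map from Definition \ref{defpointedbimod} by precomposing with $f$ on the $A$- and $B$-tensor factors, while $f_!$ is the usual left Kan extension. Both model structures are transferred from the projective model structure on $I_n$-collections, with weak equivalences and fibrations detected pointwise; since $f^*$ is the identity on underlying collections it preserves fibrations, trivial fibrations and indeed all weak equivalences. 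Hence $(f_!, f^*)$ is automatically a Quillen adjunction whose derived counit is a weak equivalence.

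To upgrade this to a Quillen equivalence it suffices to check that the derived unit $X \to f^* f_!(X)$ is a weak equivalence for every cofibrant pointed $D^0-D^0$-bimodule $X$. I would do this by embedding the situation in the category $\mathcal{T}$ of triples $(A,B,C)$ where $C$ is a pointed $A-B$-bimodule, in which $f_!(X)$ fits into the pushout square
\[
	\xymatrix{
		(D^0,D^0,\alpha^{D^0,D^0}) \ar[r] \ar[d] & (D^0,D^0,X) \ar[d] \\
		(\zeta,\zeta,\alpha^{\zeta,\zeta}) \ar[r] & (\zeta,\zeta,f_! X),
	}
\]
with $\alpha^{A,B}$ as in Lemma \ref{lemmaalpha}. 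The top horizontal map is a cofibration in $\mathcal{T}$ because $X$ is cofibrant and $\alpha^{D^0,D^0}$ is the initial pointed $D^0-D^0$-bimodule (read in $\mathcal{T}$ with $D^0$ on the first two coordinates). The left vertical map is a weak equivalence because $f$ is and because $\alpha^{A,B}$ is the free pointed bimodule on the empty collection, so depends on $(A,B)$ through a left Quillen functor. Left properness of $\mathcal{T}$ would then force the right vertical map to be a weak equivalence, whose third coordinate is exactly the unit $X \to f^* f_!(X)$.

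The hard part will be extending Lemma \ref{lemmaleftproper} to show that $\mathcal{T}$ itself is left proper. I would mimic the quasi-tameness analysis of that proof: the polynomial monad for $\mathcal{T}$ is a slight enlargement of the one for triples with ordinary $A-B$-bimodules, carrying extra labelling data specifying the pointed structure. The classifiers for semi-free coproducts are almost identical to the ones analysed there; the new label-changing morphisms coming from the pointed structure are of exactly the type controlled by Lemma \ref{claimcontractibleliftingcategory}, so the relevant subcategory of local terminal objects remains a final subcategory with contractible nerve via the same left-adjoint-to-inclusion argument. Theorem 4.17 of \cite{bwdquasitame} will then give left properness of $\mathcal{T}$ and the argument closes.
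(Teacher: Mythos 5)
The overall strategy is close to the paper's: both you and the paper cast the problem in the Grothendieck construction of pointed bimodules over the category $\mathcal{C}$ of pairs of $(m,n)$-bimodules, and both use left properness to reduce the Quillen-equivalence claim to checking that the comparison map between initial objects in the two fibres (in your notation $\alpha^{D^0,D^0}\to\alpha^{\zeta,\zeta}$, in the paper's notation $0\to\tau^*(0)$) is a weak equivalence. The paper packages this reduction by invoking \cite[Theorem 3.22]{bwdgrothendieck}, whereas you reconstruct it by hand via a pushout square; those are essentially the same move.

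However, your proposal has a genuine gap exactly at the point where all the work has to happen. You assert that the left vertical map $(D^0,D^0,\alpha^{D^0,D^0})\to(\zeta,\zeta,\alpha^{\zeta,\zeta})$ is a weak equivalence ``because $f$ is and because $\alpha^{A,B}$ \dots depends on $(A,B)$ through a left Quillen functor.'' A left Quillen functor preserves weak equivalences between \emph{cofibrant} objects (Ken Brown's lemma), but $(\zeta,\zeta)$ is not cofibrant --- that is the whole reason $D^0$ was introduced. So ``left Quillen applied to a weak equivalence'' does not license the conclusion that $\alpha^{D^0,D^0}\to\alpha^{\zeta,\zeta}$ is a weak equivalence. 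This is precisely the non-formal input: the paper spends most of the proof verifying it, by computing $\alpha^{D^0,D^0}$ as the nerve of an explicit classifier (objects being trees in $B_{m,n}$ with white vertices $A/B$-labelled subject to the $(m-1)$-dimensional splitting condition, morphisms being nested trees), exhibiting a final subcategory (discrete, or a coproduct of categories with terminal objects when $m\in\{n-1,n\}$), and identifying the result with the set of connected components, which is $\alpha^{\zeta,\zeta}$. None of that analysis can be replaced by a purely formal appeal to Quillen functoriality. Your remaining ideas (the reduction via a pushout, the need to extend Lemma~\ref{lemmaleftproper} to triples with \emph{pointed} $A$-$B$-bimodules, the use of quasi-tameness there) are sensible and in fact track the paper's use of \cite[Theorem 3.22]{bwdgrothendieck}, but as written the proof is incomplete without the classifier computation.
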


\begin{proof}
	Let $\mathcal{C}$ be the category of pairs of $(m,n)$-bimodules and $\mathrm{PBimod}_{-,-}: \mathcal{C}^{op} \to \mathrm{CAT}$ the functor which sends a pair $(A,B)$ to the category of pointed $A-B$-bimodules. The Grothendieck construction over this functor is left proper according to Lemma \ref{lemmaleftproper}. The desired Quillen adjunction is induced by the unique map $\tau: (D^0,D^0) \to (\zeta,\zeta)$ in $\mathcal{C}$. To prove that it is indeed a Quillen equivalence, we need to prove that the unique map $0 \to \tau^*(0)$ is a weak equivalence. The initial $D^0-D^0$-bimodule is the left adjoint of the projection from the Grothendieck construction to the base $\mathcal{C}$ applied to the pair $(D^0,D^0)$. The projection is the restriction functor induced by a map of polynomial monads, and the initial $D^0-D^0$-bimodule can be computed as the nerve of the classifier induced by this map. The objects of this classifier are pair $(b,W) \in B_{m,n}$ with white vertices labelled with $A$ or $B$. There must be an $m-1$-dimensional subset $V \subset W$ such that all the vertices in $V_-$ have label $A$ and all the vertices in $V_+$ have label $B$. The morphisms can be given by nested trees, as it was done in Subsection \ref{subsectionuniversalclassifierbimodmn}. The trees inside a nested tree must have all vertices with the same label. If $m=n$ or $m=n-1$, his category is a coproduct of categories with terminal objects. A typical terminal object for $m=n$ has the root vertex labelled with $A$ and all edges above the root vertex is the root of a corolla with label $B$. For $m=n-1$, an object given by $(b,W) \in B_{m,n}$ is terminal when the tree $(b^\downarrow,W^\downarrow)$ has the same description as for the case $m=n$. For $m<n-1$, the classifier is not a coproduct of categories with terminal object. Let us consider the subcategory of objects given by $(b,W) \in B_{m,n}$ such that in $b^\downarrow$, there are no black vertices above a white vertex and a black vertex is below an $A$-vertex if and only if it is also below a $B$-vertex. It is a final subcategory because the inclusion functor has a left adjoint. It is a coproduct of categories with initial object because the only degree of freedom for the morphisms is in the colours of the vertices. The local initial objects are when the number of black vertices is maximal. The initial $\zeta-\zeta$-bimodule is discrete, it is given by the set of connected components of the initial $D^0-D^0$-bimodule. This proves that the unique map $0 \to \tau^*(0)$ is a weak equivalence. Therefore the conditions of \cite[Theorem 3.22]{bwdgrothendieck} are satisfied, which means that $\tau$ indeed induces a Quillen equivalence.
\end{proof}

Let $\alpha$ be the $\zeta-\zeta$-bimodule such that the category of pointed $\zeta-\zeta$-bimodules is equivalent to the comma category of $\zeta-\zeta$-bimodules under $\alpha$, given by Lemma \ref{lemmaalpha}.

\begin{lemma}\label{lemmafibrationsequence}
	For any multiplicative $(m,n)$-bimodule $X$, there is a fibration sequence
	\[
		\Omega \mathrm{Map}_{\mathrm{Bimod}_{m,n}} (\zeta,u^* X) \to \mathrm{Map}_{\mathrm{Bimod}_{\zeta,\zeta}} (\zeta, v^* X) \to \mathrm{Map}_{\mathrm{Bimod}_{\zeta,\zeta}} (\alpha, v^* X),
	\]
	where $\mathrm{Bimod}_{\zeta,\zeta}$ is the category of $\zeta-\zeta$-bimodules.
\end{lemma}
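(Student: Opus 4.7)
The plan is to realise $\Omega \mathrm{Map}_{\mathrm{Bimod}_{m,n}}(\zeta,u^*X)$ as a derived mapping space of pointed $\zeta$--$\zeta$-bimodules, and then apply Lemma \ref{lemmaalpha} to turn that description into the stated fibration.

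First, I would fix a cofibrant replacement $D^0\to\zeta$ in $\mathrm{Bimod}_{m,n}$ (which is left proper by Lemma \ref{lemmaleftproper}) and factor the fold map as $D^0\sqcup D^0\hookrightarrow C\twoheadrightarrow D^0$, giving a cylinder object $C$. Since mapping out of such a cylinder models the diagonal $\mathrm{Map}(D^0,X)\to \mathrm{Map}(D^0,X)^2$, the loop space based at $p\colon\zeta\to u^*X$ is the homotopy fibre at $(p,p)$ of
\[
	\mathrm{Map}_{\mathrm{Bimod}_{m,n}}(C,X) \longrightarrow \mathrm{Map}_{\mathrm{Bimod}_{m,n}}(D^0\sqcup D^0,X).
\]

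Next, I would observe that $C$ together with its two boundary inclusions is canonically an algebra for $\mathbf{Bimod}_{m,n}^{\bullet\to\bullet\leftarrow\bullet}$, and that $(X,X,X)$ with identity maps is another such algebra. The fibre above is then the space of cospan-algebra morphisms between them with the two outer components pinned to $p$. The homotopical cofinality of $f$ from Theorem \ref{thmcofinalcospan}, combined with the classifier techniques of \cite{batanindeleger}, converts this into the analogous mapping space between the corresponding $\mathbf{Bimod}_{m,n}^{\bullet+\bullet}$-algebras; after pinning, what remains is the derived mapping space from $C$ to $v^*X$ in pointed $D^0$--$D^0$-bimodules, the pointed structure on $v^*X$ being induced by the multiplicative structure on $X$.

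Third, since $C\twoheadrightarrow D^0$ is a weak equivalence and $C$ is cofibrant as a pointed $D^0$--$D^0$-bimodule, this is the derived mapping space from $\zeta$ to $v^*X$ in pointed $D^0$--$D^0$-bimodules. The Quillen equivalence of Lemma \ref{lemmaquillenequivalences} transports it to the derived mapping space in pointed $\zeta$--$\zeta$-bimodules. Finally Lemma \ref{lemmaalpha} realises that category as the comma category $\alpha/\mathrm{Bimod}_{\zeta,\zeta}$, and the standard fibration sequence for mapping spaces in an under-category,
\[
	\mathrm{Map}_{\alpha/\mathrm{Bimod}_{\zeta,\zeta}}(\zeta,v^*X) \longrightarrow \mathrm{Map}_{\mathrm{Bimod}_{\zeta,\zeta}}(\zeta,v^*X) \longrightarrow \mathrm{Map}_{\mathrm{Bimod}_{\zeta,\zeta}}(\alpha,v^*X),
\]
yields the claimed sequence once the leftmost term is identified with $\Omega \mathrm{Map}_{\mathrm{Bimod}_{m,n}}(\zeta,u^*X)$ via the preceding steps.

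The hard part is the second step: upgrading the homotopical cofinality of Theorem \ref{thmcofinalcospan} (which by definition only asserts contractibility of a certain classifier) into an honest equivalence of derived mapping spaces with pinned outer boundary. The expected strategy is a bar-construction comparison between the two polynomial monads in the spirit of \cite{batanindeleger}, using the left-properness provided by Lemma \ref{lemmaleftproper} to ensure that the homotopy fibres we pass to are well-behaved. The other steps are essentially model-categorical bookkeeping given the structural lemmas already established.
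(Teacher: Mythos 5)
Your outline tracks the paper's own proof very closely: both pass from the loop space to a mapping space out of $S^0 = D^0 \amalg D^0$ in $\mathrm{Bimod}_{m,n}$, then use the homotopical cofinality of $f$ from Theorem~\ref{thmcofinalcospan} to descend to pointed $D^0$--$D^0$-bimodules, then apply Lemma~\ref{lemmaquillenequivalences} to pass to pointed $\zeta$--$\zeta$-bimodules, and finally use Lemma~\ref{lemmaalpha} together with the standard fibration sequence for an under-category. Lemma~\ref{lemmaleftproper} plays the same role in both arguments. So the decomposition of the problem is the same.

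However, there is a genuine gap exactly where you flag one. You observe correctly that homotopical cofinality of $f$ is by definition only a contractibility statement about a classifier, and that converting it into a weak equivalence of derived mapping spaces with pinned boundary is the substantive step; but your proposed remedy, a ``bar-construction comparison between the two polynomial monads in the spirit of \cite{batanindeleger}'', is not a proof. The paper does not perform any such bar-resolution argument here. Instead it invokes specific machinery from \cite{deleger} that is packaged precisely for this purpose: Theorem~4.5 of \cite{deleger} produces the delooping
\[
\Omega\,\mathrm{Map}_{\mathrm{Bimod}_{m,n}}(\zeta,u^*X) \;\simeq\; \mathrm{Map}_{S^0/\mathrm{Bimod}_{m,n}}(\zeta,h^*X)
\]
directly from left properness, and Remark~4.8 together with Definition~4.7 of \cite{deleger} show that a homotopically cofinal map of polynomial monads induces a \emph{left cofinal Quillen functor}, that is a left Quillen functor preserving cofibrant replacements of the terminal object, which by adjunction gives the desired equivalence of derived mapping spaces $\mathrm{Map}_{S^0/\mathrm{Bimod}_{m,n}}(\zeta,h^*X)\simeq \mathrm{Map}_{\mathrm{PBimod}_{D^0,D^0}}(\zeta,g^*h^*X)$ without any pinning-of-boundary argument. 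Without this identification of cofinality with the left-cofinal-Quillen-functor property, your second step does not go through as written, and the rest of the argument, however correct in outline, hangs on it. Similarly, the final fibration sequence in the paper is justified by citing Theorem~4.13 of \cite{deleger} and Proposition~2.7 of \cite{rezk}, rather than asserted as ``standard'', though here the gap is one of reference rather than of substance.
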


\begin{proof}
	According to Lemma \ref{lemmaleftproper}, $\mathrm{Bimod}_{m,n}$ is left proper. We can therefore apply \cite[Theorem 4.5]{deleger} to get the delooping
	\begin{equation}\label{equationdelooping}
		\Omega \mathrm{Map}_{\mathrm{Bimod}_{m,n}} \left(\zeta,u^* X\right) \to \mathrm{Map}_{S^0/\mathrm{Bimod}_{m,n}} \left(\zeta,h^* X \right),
	\end{equation}
	where $S^0 := D^0 \amalg D^0$.

	The map $f$ of polynomial monads \ref{cofinalmappolymon} induces a Quillen adjunction $f_! \dashv f^*$ between categories of algebras. Note that there is a Quillen adjunction $g_! \dashv g^*$ between $\mathrm{PBimod}_{D^0,D^0}$ and $S^0/\mathrm{Bimod}_{m,n}$, such that $f_!(D^0,D^0,C) = (D^0,D^0,g_!(C))$ and $f^*(D^0,D^0,C) = (D^0,D^0,g^*(C))$. Thanks to Theorem \ref{thmcofinalcospan}, $f$ is homotopically cofinal. According to \cite[Remark 4.8]{deleger}, this means that $f_!$ is \emph{a left cofinal Quillen functor}, that is, it preserves cofibrant replacements of the terminal objects \cite[Definition 4.7]{deleger}. Therefore, $g_!$ is also left cofinal. We deduce by adjunction that there is a weak equivalence
	\begin{equation}\label{equationweakequivalence}
		\mathrm{Map}_{S^0/\mathrm{Bimod}_{m,n}} \left(\zeta,h^* X \right) \to \mathrm{Map}_{\mathrm{PBimod}_{D^0,D^0}} \left(\zeta,g^* h^* X \right).
	\end{equation}
	According to Lemma \ref{lemmaquillenequivalences}, there is a weak equivalence
	\begin{equation}\label{quillenequivalence}
		\mathrm{Map}_{\mathrm{PBimod}_{\zeta,\zeta}} (\zeta, w^*X) \to \mathrm{Map}_{\mathrm{PBimod}_{D^0,D^0}} (\zeta, g^* h^*X).
	\end{equation}
	Using Lemma \ref{lemmaleftproper}, $\mathrm{Bimod}_{\zeta,\zeta}$ is left proper.  According to Lemma \ref{lemmaalpha}, $\mathrm{PBimod}_{\zeta,\zeta}$ is isomorphic to $\alpha / \mathrm{Bimod}_{\zeta,\zeta}$. This means that we can apply \cite[Theorem 4.13]{deleger} and \cite[Proposition 2.7]{rezk} to get the fibration sequence
	\begin{equation}\label{equationfibrationsequence}
	\mathrm{Map}_{\mathrm{PBimod}_{\zeta,\zeta}} \left(\zeta, w^* X \right) \to \mathrm{Map}_{\mathrm{Bimod}_{\zeta,\zeta}} \left(\zeta,v^* X \right) \to \mathrm{Map}_{\mathrm{Bimod}_{\zeta,\zeta}} (\alpha, v^* X).
	\end{equation}
	We get the desired result by combining \ref{equationdelooping}, \ref{equationweakequivalence}, \ref{quillenequivalence} and \ref{equationfibrationsequence}.
\end{proof}

\subsection{The cases $m=n$ and $m=n-1$}

\begin{lemma}\label{lemmamonoids}
	Let $0 < m \leq n$ and a triple $(A,B,C)$ where $A$ and $B$ are $(m,n)$-bimodules in $(\mathcal{E},\otimes,e)$ and $C$ is an $A-B$-bimodule. Let us assume that $m=n$ or $m=n-1$. Then $C$ is pointed if and only if it is equipped with a map $e \to C_{\kappa(i)}$ for all $i \in I_{m-1}$.
\end{lemma}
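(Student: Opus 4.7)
The plan is to establish the two directions of the equivalence by different strategies.

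For the direction $(\Rightarrow)$, the unit maps are obtained by specializing the pointed structure to minimal admissible input data. Given $i \in I_{m-1}$, let $b_i \in B_n$ be the tree obtained from the free living edge $\nu(i) \in B_m$ by applying the unit of $\mathbf{Id}^{+k}$ successively for $k = m+1,\ldots,n$; by construction $t(b_i) = \kappa(i)$. When $m = n$, $b_i$ has no vertices and $W = \emptyset$ is $n$-dimensional, and the trivial partition $V_- = V_+ = V = \emptyset$ is admissible with $U = \emptyset$ as a vacuously $(n-1)$-dimensional witness (see Remark \ref{casezeroandn}). When $m = n-1$, $b_i$ has a single unary vertex $v$, and the partition $V = \{v\}$, $V_\pm = \emptyset$ is admissible with the same empty witness $U$. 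In either case the pointed structure map specializes to the desired $e \to C_{t(b_i)} = C_{\kappa(i)}$.

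For the direction $(\Leftarrow)$, assume $C$ is an $A-B$-bimodule equipped with unit maps $\eta_i : e \to C_{\kappa(i)}$. Given admissible data $(b,W,V_-,V_+,V)$ with $(m-1)$-dimensional witness $U$, decompose $V_\pm = U_\pm \sqcup E_\pm$ where the exceptional subsets $E_\pm \subset U \setminus V$ are disjoint. The plan is to define the pointed structure map as the composite: first tensor in units $\eta$ to produce $C_{s(w)}$-factors at each exceptional position $w \in E_- \sqcup E_+$; next absorb the residual $A$- or $B$-factor at each such $w$ into the newly created $C$-factor, using the $A-B$-bimodule action on a corolla-like piece of the tree; finally apply the $A-B$-bimodule structure map at the enlarged partition $(U_-, U_+, U)$. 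The restriction $m \in \{n, n-1\}$ is essential at the first step: it forces the exceptional vertices to sit sufficiently close to the top of $b$ that their sources $s(w)$ lie in the image of $\kappa$, making the available units applicable; for $m < n-1$, exceptional vertices may occur at deeper levels where additional unit-like data beyond $\{\eta_i\}_{i \in I_{m-1}}$ would be required.

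The main obstacle is twofold. First, I must verify the combinatorial claim that, when $m=n$ or $m=n-1$, the sources of vertices in any $(m-1)$-dimensional subset lie in $\kappa(I_{m-1})$; this should follow by unwinding Definitions \ref{constructiontree} and \ref{definitionmdimensionalsetofvertices} and comparing with the explicit description of $\kappa$ via iterated units applied to free living edges. Second, I must verify that the constructed pointed structure satisfies the associativity and unitality axioms of Definition \ref{defpointedbimod}; this will be a routine but bookkeeping-heavy check relying on compatibility between the units, the $A$- and $B$-operations, and the $A-B$-bimodule structure maps.
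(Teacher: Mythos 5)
Your overall plan mirrors the paper's (produce units by evaluating the pointed structure at the canonical minimal tree; build general pointed maps from units plus the $A$-$B$-bimodule structure), but there are two concrete problems.

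\textbf{Forward direction, $m=n-1$.} Your tree $b_i$ is the corolla in $B_n$ whose unique vertex is decorated by the free living edge $\nu(i)\in B_{n-1}$. That vertex has $|p^{-1}(\nu(i))|=0$ inputs, since $\nu(i)$ has no vertices --- it is a \emph{trunk} (nullary), not unary. More importantly, the white-vertex set you propose, $W=\{v\}$, is not $(n-1)$-dimensional: with $v$ white, the subtree $b_0$ of Definition~\ref{constructiontree} is just the root edge, so $b^{\downarrow}$ is the unary corolla $\eta_{n-1}(\eta_{n-2}(i))$ with one vertex, but that vertex corresponds to the root edge of $b_i$, which is not internal, so $W^{\downarrow}=\varnothing\neq\{\text{vertex of }b^{\downarrow}\}$. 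Hence $(b_i,\{v\})\notin B_{n-1,n}$ and there is no pointed-structure map to specialize. The correct choice, used in the paper, is $W=\varnothing$ (so the single trunk vertex is black --- this is allowed since nullary black vertices are not forbidden, only unary ones), with the trivial partition.

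\textbf{Backward direction.} The combinatorial claim you flag as a task to verify --- that for $m\in\{n,n-1\}$ the sources $s(w)$ of vertices $w$ in any $(m-1)$-dimensional subset lie in $\kappa(I_{m-1})$ --- is false. The vertices of an $(m-1)$-dimensional subset $U\subset W$ are just vertices of $b$, and their decorations can be arbitrary elements of $B_{n-1}$; nothing constrains them to the unit operations $\kappa(I_{m-1})=\{\eta_{n-1}(i)\}$. So the plan of ``tensoring in units to produce $C_{s(w)}$-factors at exceptional positions'' cannot work as stated. The paper sidesteps this entirely: rather than using existing vertices in $E_\pm$, it constructs a \emph{new} tree $\tilde b$ by inserting fresh unary vertices on the edges between $V_-$ and $V_+$ (via Lemma~\ref{lemmainsertingtrunk} in the $m=n-1$ case). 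These inserted vertices are decorated by unit operations $\eta_{n-1}(j)$, so their sources really do lie in $\kappa(I_{m-1})$ by construction. One then tensors in units precisely at the new vertices and applies a single $A$-$B$-bimodule structure map for $(\tilde b,\tilde W)$ along the enlarged $(m-1)$-dimensional subset $\tilde V$. You should replace the ``absorb at exceptional positions'' step with this insertion construction; otherwise the argument has a genuine gap.
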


\begin{proof}
	First let us assume that $C$ is pointed. Let $\lambda$ be the composite $I_{m-1} \to B_m \to B_n$ where the first map picks the free living edge and the second map is obtained by applying the unit $n-m$ times. For $i \in I_{m-1}$, $(\lambda(i),\varnothing)$, where $\varnothing$ is the empty set, is $m$-dimensional. The pointed $A-B$-bimodule map induced by $(\lambda(i),\varnothing)$ is a map $e \to C_{\kappa(i)}$.
	
	Now let us assume that $C$ is equipped with a map $e \to C_{\kappa(i)}$ for all $i \in I_{m-1}$. Let $(b,W) \in B_{m,n}$ and partitions of $W$ into $V_-$, $V_+$ and $V$ such that there is an $m-1$-dimensional subset $U \subset W$ satisfying $U_- \subset V_-$ and $U_+ \subset V_+$. We want to construct a map \ref{mappointedbimodule}.
	
	If $m=n$, according to Remark \ref{casezeroandn}, each path in $b$ from the root to a leaf meets vertices in $V_-$, then at most one vertex in $V$, then vertices in $V_+$. Let $\tilde{b}$ be the tree obtained from $b$ by adding a unary vertex on each edge between a vertex in $V_-$ and a vertex in $V_+$. Let $\tilde{V}$ and $\tilde{W}$ be the sets $V$ and $W$, respectively, plus the set of unary vertices which have been added. Then $(\tilde{b},\tilde{W}) \in B_{m,n}$ and $\tilde{V} \subset \tilde{W}$ is $m-1$-dimensional. The desired map is given by the composite
	\begin{equation}\label{compositepointedbimodule}
		A_{s(V_-)} \otimes C_{s(V)} \otimes B_{s(V_+)} \to A_{s(\tilde{V}_-)} \otimes C_{s(\tilde{V})} \otimes B_{s(\tilde{V}_+)} \to C_{t(b)},
	\end{equation}
	where the first map is using the maps $e \to C_{\kappa(i)}$ for $i \in I_{m-1}$ and the second map is from the $A-B$-bimodule structure.
	
	If $m=n-1$, let $(b^\downarrow,W^\downarrow)$ be the tree obtained by applying the construction of Definition \ref{constructiontree} to $(b,W)$. We can construct $(\tilde{b}^\downarrow,\tilde{W}^\downarrow)$ as in the case $m=n$. Let $(\tilde{b},\tilde{W})$ given by Lemma \ref{lemmainsertingtrunk}. The desired map is given by the composite \ref{compositepointedbimodule} again.
\end{proof}

\begin{theorem}\label{theoremtdhgeneral}
	We do have the fibration sequence \ref{fibrationsequence} if $m=n$ or $m=n-1$.
\end{theorem}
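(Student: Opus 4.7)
The plan is to reduce the statement to Lemma \ref{lemmafibrationsequence} by identifying its middle and right-hand terms with those of \ref{fibrationsequence} in the restricted range $m \in \{n-1, n\}$. That lemma already produces the fibration sequence
\[
\Omega \mathrm{Map}_{\mathrm{Bimod}_{m,n}}(\zeta, u^*X) \to \mathrm{Map}_{\mathrm{Bimod}_{\zeta,\zeta}}(\zeta, v^*X) \to \mathrm{Map}_{\mathrm{Bimod}_{\zeta,\zeta}}(\alpha, v^*X),
\]
so I only need to rewrite the two rightmost terms.

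For the middle term, since $m \in \{n-1, n\}$ implies $n-2 \leq m$, Lemma \ref{lemmabimodulesbimodules} supplies a strict isomorphism of categories $\mathrm{Bimod}_{m-1,n} \cong \mathrm{Bimod}_{\zeta,\zeta}$. From the explicit construction $(b,V) \mapsto (\tilde{b}, \tilde{W})$ given in its proof, this isomorphism sends the terminal $(m-1,n)$-bimodule to the terminal $\zeta-\zeta$-bimodule, and it intertwines the two forgetful functors from $\mathrm{Bimod}_{m,n}$. This rewrites the middle term as $\mathrm{Map}_{\mathrm{Bimod}_{m-1,n}}(\zeta, v^*X)$.

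For the rightmost term, I combine Lemma \ref{lemmaalpha} with Lemma \ref{lemmamonoids}. By Lemma \ref{lemmaalpha}, $\zeta-\zeta$-bimodule maps $\alpha \to C$ are in natural bijection with pointed $\zeta-\zeta$-bimodule structures on $C$. In the range $m \in \{n-1, n\}$, Lemma \ref{lemmamonoids} (applied with $A = B = \zeta$) identifies such structures with tuples of basepoint maps $(e \to C_{\kappa(i)})_{i \in I_{m-1}}$. Composing these two natural bijections and evaluating at $C = v^*X$ gives
\[
\mathrm{Map}_{\mathrm{Bimod}_{\zeta,\zeta}}(\alpha, v^*X) \cong \prod_{i \in I_{m-1}} X_{\kappa(i)},
\]
which is the third term of \ref{fibrationsequence}.

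The main obstacle is promoting these strict (1-categorical) identifications to equivalences of \emph{homotopy} mapping spaces so that the resulting sequence remains a homotopy fibration. The middle term is unproblematic: Lemma \ref{lemmabimodulesbimodules} gives an isomorphism of categories, which induces a strict isomorphism of the transferred model structures. For $\alpha$, one must verify that it corepresents the correct derived functor, which comes down to checking that $\alpha$ is, or can be replaced by, a cofibrant $\zeta-\zeta$-bimodule. The description supplied by Lemma \ref{lemmamonoids} exhibits $\alpha$ as the free $\zeta-\zeta$-bimodule on a discrete collection of generators indexed by $I_{m-1}$, so cofibrancy follows in the transferred model structure; left properness of $\mathrm{Bimod}_{\zeta,\zeta}$ (Lemma \ref{lemmaleftproper}) ensures the homotopy fibration is preserved under the two rewritings.
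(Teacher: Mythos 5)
Your proof is correct and follows essentially the same route as the paper: both reduce to Lemma \ref{lemmafibrationsequence}, identify the middle terms via the category isomorphism of Lemma \ref{lemmabimodulesbimodules}, and identify the third terms by using Lemma \ref{lemmamonoids} to exhibit $\alpha$ as the free $\zeta-\zeta$-bimodule on the terminal $I_{m-1}$-collection and then invoking the (derived) free--forgetful adjunction. You helpfully make explicit the cofibrancy of $\alpha$ that underlies the paper's terse ``adjunction argument''; the only minor quibble is that left properness is what powers Lemma \ref{lemmafibrationsequence} itself rather than these two rewritings, which only require the category isomorphism and the cofibrancy of $\alpha$.
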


\begin{proof}
	We want to prove that the desired fibration sequence is equivalent to the fibration sequence of Lemma \ref{lemmafibrationsequence}. The first terms of both fibration sequences are the same. According to Lemma \ref{lemmabimodulesbimodules}, the category $\mathrm{Bimod}_{\zeta,\zeta}$ of $\zeta-\zeta$-bimodules is isomorphic to the category $\mathrm{Bimod}_{m-1,n}$, so the second terms are also equivalent. We deduce from Lemma \ref{lemmamonoids} that $\alpha$ is the image of the terminal $I_{m-1}$-collection through the left adjoint of the forgetful functor from $\zeta-\zeta$-bimodules to $I_{m-1}$-collections. So, by an adjunction argument, the third terms are also equivalent.
\end{proof}

\begin{remark}
	Recall from Example \ref{examplelowercases} what are $(m,n)$-bimodules in the case $n=2$. We deduce that Theorem \ref{theoremtdhgeneral} in this case gives us the Turchin/Dwyer-Hess theorem. Interestingly, the fibration sequence \ref{fibrationsequence} does not seem to hold in general for $m<n-1$ without extra assumptions. The rest of this paper will consist in investigating the case $(m,n)=(1,3)$.
\end{remark}

\subsection{Dendroidal category for planar trees $\Omega_p$}

Let $\Omega_p$ be the version of the dendroidal category for planar trees \cite[Definition 2.2.1]{moerdijktoen}. The objects are isomorphism classes of planar trees and the morphisms are generated by:
\begin{itemize}
	\item \emph{inner face maps} of the form $\partial_e: T/e \to T$, where $e$ is an internal edge of $T$ and $T/e$ is the tree obtained from $T$ by contracting $e$:
	\[
		\begin{tikzpicture}
			\draw (0,-.2) -- (0,0) -- (-.4,.4);
			\draw (0,.4) -- (0,0) -- (.5,.5);
			\draw[fill] (0,0) circle (1pt);
			\draw[fill] (.5,.5) circle (1pt);
			\draw (0,-.2) node[below]{$T/e$};
			
			\draw (1.5,.2) node{$\longrightarrow$};
			\draw (1.5,.2) node[above]{$\tiny{\partial_e}$};
			
			\begin{scope}[shift={(3,0)}]
			\draw (0,-.2) -- (0,0) -- (-.8,.8);
			\draw[very thick] (0,0) -- (-.5,.5);
			\draw (-.5,.5) -- (-.2,.8);
			\draw (0,0) -- (.5,.5);
			\draw[fill] (0,0) circle (1pt);
			\draw[fill] (-.5,.5) circle (1pt);
			\draw[fill] (.5,.5) circle (1pt);
			\draw (-.15,.35) node[below left]{$e$};
			\draw (0,-.2) node[below]{$T$};
			\end{scope}
		\end{tikzpicture}
	\]
	
	\item \emph{outer face maps} of the form $\partial_v: T/v \to T$, where $v$ is a vertex of $T$, possibly the root, with exactly one inner edge attached to it and $T/v$ is the tree obtained from $T$ by removing the vertex $v$ and all the outer edges incident to it:
	\[
		\begin{tikzpicture}
			\draw (0,-.15) -- (0,0) -- (-.7,.7);
			\draw (-.35,.7) -- (-.35,.35) -- (0,.7);
			\draw (0,0) -- (.35,.35);
			\draw[fill] (0,0) circle (1pt);
			\draw[fill] (-.35,.35) circle (1pt);
			
			\draw (1.5,.2) node{$\longrightarrow$};
			\draw (1.5,.2) node[above]{$\tiny{\partial_v}$};
			
			\begin{scope}[shift={(3.2,0)}]
			\draw (0,-.15) -- (0,0) -- (-.35,.35) -- (.25,.95);
			\draw (0,.7) -- (-.25,.95);
			\draw (-.7,.7) -- (-.35,.35) -- (-.35,.7);
			\draw (0,0) -- (.35,.35);
			\draw[fill] (0,0) circle (1pt);
			\draw[fill] (-.35,.35) circle (1pt);
			\draw[fill] (0,.7) circle (1pt) node[right]{$v$};
			\end{scope}
		\end{tikzpicture}
	\]
	
	\item \emph{degeneracy maps} of the form $\sigma_v: T \to T \backslash v$, where $v$ is a unary vertex of $T$ and $T \backslash v$ is the tree obtained from $T$ by removing the vertex $v$ and merging the two edges incident to it into one:
	\[
		\begin{tikzpicture}
			\draw (0,-.2) -- (0,0) -- (-.4,.4);
			\draw (0,0) -- (.5,.5) -- (.2,.8);
			\draw (.5,.5) -- (.8,.8);
			
			\draw[fill] (0,0) circle (1pt);
			\draw[fill] (.25,.25) circle (1pt) node[right]{$v$};
			\draw[fill] (.5,.5) circle (1pt);
			
			\draw (1.5,.2) node{$\longrightarrow$};
			\draw (1.5,.2) node[above]{$\tiny{\sigma_v}$};
			
			\begin{scope}[shift={(3,0)}]
			\draw (0,-.2) -- (0,0) -- (-.4,.4);
			\draw (0,0) -- (.5,.5) -- (.2,.8);
			\draw (.5,.5) -- (.8,.8);
			
			\draw[fill] (0,0) circle (1pt);
			\draw[fill] (.5,.5) circle (1pt);
			\end{scope}
		\end{tikzpicture}
	\]
\end{itemize}
These generating morphisms are subject to obvious relations \cite{moerdijktoen}.

\begin{remark}\label{remarkinertactiveomegap}
	Note that $\Omega_p$ has a factorisation system \cite{berger}. Inert maps are generated by the outer face maps. They correspond to full inclusions of trees. Active maps consist of blowing up some vertices, that is inserting new trees inside these vertices. They are generated by the inner face and degeneracy maps.
\end{remark}

\subsection{$(0,3)$-bimodules as covariant presheaves over $\Omega_p$}

We have the following analogue of the well-known equivalence between infinitesimal bimodules over the terminal non-symmetric operad and cosimplicial objects \cite[Lemma 4.2]{turchincosimplicial}.

\begin{lemma}\label{lemmabimodzerothree}
	The category of $(0,3)$-bimodules is isomorphic to the category of covariant presheaves over $\Omega_p$.
\end{lemma}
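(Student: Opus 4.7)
The plan is to establish an isomorphism of small categories $\mathbf{Bimod}_{0,3} \cong \Omega_p$, from which the statement follows at once: since $m=0$ forces the polynomial monad $\mathbf{Bimod}_{0,n}$ to be a small category (as noted earlier, $E_{0,n} = B_{0,n}$ with the middle map being the identity), the category of $(0,3)$-bimodules in $\mathcal{E}$ is the functor category $[\mathbf{Bimod}_{0,3}, \mathcal{E}]$. The argument is a direct extension of the classical identification of infinitesimal bimodules over $Ass$ with cosimplicial objects \cite{turchincosimplicial}, replacing linear trees with planar ones.

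The objects match on the nose: $\mathrm{Ob}(\mathbf{Bimod}_{0,3}) = I_3 = B_2 = Ptr$, and the objects of $\Omega_p$ are also isomorphism classes of planar trees.

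For morphisms, the plan is to use the active/inert factorization system on $\Omega_p$ recalled in Remark~\ref{remarkinertactiveomegap}. A morphism $S \to T$ in $\mathbf{Bimod}_{0,3}$ is an element $(b,\{v\}) \in B_{0,3}$ where $v$ is decorated by $S$ and the $\mathbf{Id}^{+3}$-composite of $b$ (computed by iterated internal substitution in $\mathbf{NOp}$) equals $T$. The conditions that $b$ has no adjacent black vertices and no unary black vertices force a highly restricted shape on $b$: optionally, a black root $u$ decorated by a planar tree $T_0$ with at least two vertices, having $v$ directly above $u$ at the position of a distinguished vertex $w \in T_0$ and every other child of $u$ being a leaf in $b$; and, for each vertex $w'$ of $S$, optionally a black vertex above $v$ on the corresponding incoming edge, decorated by a planar tree $T_{w'}$ with $\ne 1$ vertices (hence either the trivial tree, which forces $w'$ to be unary in $S$ and corresponds to a degeneracy, or a tree of $\ge 2$ vertices, corresponding to a composition of inner face maps at $w'$). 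To such $(b,\{v\})$ I would assign the morphism $S \to T$ in $\Omega_p$ whose active/inert factorization $S \xrightarrow{\alpha} S' \xrightarrow{\iota} T$ has active part $\alpha$ given by $S' := S \circ (T_{w'})_{w'}$ (taking $T_{w'}$ to be the corolla of matching arity whenever no black vertex lies above the $w'$-incoming edge of $v$), and inert part $\iota$ the subtree inclusion associated to the factorization $T = T_0 \circ_w S'$ (and the identity inclusion when no black root is present). The inverse assignment is immediate from the uniqueness of the active/inert factorization in $\Omega_p$: the data $(T_0, w)$ parametrizes inert inclusions into $T$, and the data $(T_{w'})_{w'}$ parametrizes active maps out of $S$.

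The remaining step, which I expect to be the main technical obstacle, is compatibility with composition. Composition in $\mathbf{Bimod}_{0,3}$ is given by inserting one $B_3$-tree at the white vertex of another and then simplifying the result back into $B_{0,3}$: edges between newly adjacent black vertices are contracted by $\mathbf{NOp}$-multiplication (internal substitution of planar trees), and unary black vertices are removed using the $\mathbf{NOp}$-unit. A careful combinatorial unwinding shows that this simplification recovers exactly the active/inert factorization of the composite morphism in $\Omega_p$; the verification proceeds analogously to the classical case of $(0,2)$-bimodules and $\Delta$, with planar trees replacing linear ones throughout.
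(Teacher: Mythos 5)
Your proposal is correct and follows essentially the same route as the paper: identify the objects of $\mathbf{Bimod}_{0,3}$ with planar trees, and identify $B_{0,3}$ with the morphism set of $\Omega_p$ by matching trees with white root to active maps and trees with no black vertices above the white vertex to inert maps, combining via the active/inert factorization. You work out the shape of an admissible $(b,\{v\})$ in more detail than the paper (which states the correspondence in one line and leaves the composition compatibility implicit, just as you do).
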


\begin{proof}
	It comes from the fact that $B_{0,3}$ is in bijection with the set of morphisms of $\Omega_p$. Indeed, according to Remark \ref{casezeroandn}, we deduce that the set $B_{0,3}$ is the set of trees in $B_3$ with exactly one white vertex. The trees whose root vertex is the unique white vertex correspond to active morphisms. The trees without any black vertices above the unique white vertex correspond to inert morphisms.
\end{proof}

\begin{definition}
	We will call \emph{stratum} a connected component of the complement of a planar tree in the plane (we consider that the leaves and the root of the tree go to infinity). For example, in the following picture, the planar tree has four strata:
	\[
	\begin{tikzpicture}
	\draw (0,-.3) -- (0,0) -- (-1,1);
	\draw (-.5,.5) -- (0,1);
	\draw (0,0) -- (.5,.5);
	\draw (-.5,0) node{$1$};
	\draw (-.5,1) node{$2$};
	\draw (0,.5) node{$3$};
	\draw (.5,0) node{$4$};
	
	\draw[fill] (0,0) circle (1pt);
	\draw[fill] (-.5,.5) circle (1pt);
	\end{tikzpicture}
	\]
	In general, a planar tree with $n$ leaves has $n+1$ strata.
\end{definition}

Let $\alpha: \Omega_p \to \mathcal{E}$ be the functor which sends a planar tree to the coproduct of the unit over all strata of this tree.

\begin{lemma}\label{lemmapbimodzerothree}
	Let $\zeta$ be the terminal $(1,3)$-bimodule. The category of pointed $\zeta-\zeta$-bimodules is isomorphic to the category of covariant presheaves over $\Omega_p$ equipped with a map from $\alpha$ to this presheaf.
\end{lemma}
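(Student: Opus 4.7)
The plan is to chain Lemma \ref{lemmaalpha}, Lemma \ref{lemmabimodulesbimodules} (applicable to $m = 1, n = 3$ since $n - 2 \leq m$), and Lemma \ref{lemmabimodzerothree}. Lemma \ref{lemmaalpha} reduces the category of pointed $\zeta$-$\zeta$-bimodules to the comma category of $\zeta$-$\zeta$-bimodules under the auxiliary $\zeta$-$\zeta$-bimodule of that lemma, and Lemmas \ref{lemmabimodulesbimodules} and \ref{lemmabimodzerothree} give $\mathrm{Bimod}_{\zeta, \zeta} \cong \mathrm{Bimod}_{0, 3} \cong [\Omega_p, \mathcal{E}]$. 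It therefore suffices to verify that the image of the auxiliary $\zeta$-$\zeta$-bimodule under this composite equivalence coincides with the strata functor $\alpha : \Omega_p \to \mathcal{E}$ defined just above the statement, which then justifies the reuse of the name $\alpha$.

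First I would unpack the pointed $\zeta$-$\zeta$-bimodule structure in the case $m = 1, n = 3$. For any $(b, W) \in B_{1,3}$, the set $W$ inherits a canonical linear order via the bijection of Remark \ref{remarkbijections} with the vertex set of the linear tree $b^{\downarrow 2}$. A direct case analysis of Definition \ref{defpointedbimod}, using that a $0$-dimensional subset of $W$ is a single pivot element, shows that the admissible partitions $W = V_- \sqcup V \sqcup V_+$ fall into exactly two types: either $V = \{u\}$ is a singleton with $V_-, V_+$ the strictly smaller and strictly larger elements of $W$ (recovering the $\zeta$-$\zeta$-bimodule structure of Definition \ref{defbimodoverbimod}), or $V = \emptyset$ and $\{V_-, V_+\}$ is a cut of the linear order on $W$ into a prefix and a suffix. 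Since $\zeta$ is terminal, each cut partition contributes exactly one new structure map $e \to C_{t(b)}$, and these are the only additional data of a pointed $\zeta$-$\zeta$-bimodule over its underlying $\zeta$-$\zeta$-bimodule.

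Finally I would compute the underlying presheaf on $\Omega_p$ of the initial pointed $\zeta$-$\zeta$-bimodule as the free construction on these cut generators, modulo the relations imposed by composition with the ordinary $\zeta$-$\zeta$-bimodule structure. Under the correspondence of Lemma \ref{lemmabimodzerothree}, a cut of the linearly ordered $W$ for $(b, W) \in B_{1,3}$ with $t(b) = T$ projects to a gap in the leaf-ordering of $T$, which is a stratum of $T$. Naturality in $\Omega_p$ then follows by a routine check on generating morphisms: inner face maps preserve strata, outer face maps inject the strata of the subtree into those of the ambient tree, and degeneracy maps give bijections. The main obstacle is the combinatorial identification that the $\zeta$-$\zeta$-bimodule relations glue two cuts exactly when they project to the same stratum of $T$; granting this, one obtains $\alpha(T) = \coprod_{\text{strata of } T} e$ naturally in $T$, which is precisely the strata functor $\alpha$, and the proof concludes.
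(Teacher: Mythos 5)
Your proposal follows the same strategy as the paper. Both reduce via Lemmas \ref{lemmaalpha}, \ref{lemmabimodulesbimodules}, and \ref{lemmabimodzerothree} to identifying the underlying covariant presheaf of the initial pointed $\zeta$--$\zeta$-bimodule with the strata functor, and both rely on the linear order on $W$ coming from $b^{\downarrow 2}$ to describe the ``extra'' pointed structure as cuts of $W$ (equivalently, $A/B$-labelings with a pivot). The paper's proof obtains the initial pointed $\zeta$--$\zeta$-bimodule as the set of connected components of the classifier already described in the proof of Lemma~\ref{lemmaquillenequivalences}, whereas you reconstruct the same data as a free object on cut generators; these are two descriptions of the same computation.

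The one place where you should be more careful is the final combinatorial identification, which you flag as the ``main obstacle'' and then grant. The paper handles it by passing from a cut of $W$ to an $A/B$-labeling of the leaves of $t(b)$ (with $A$'s to the left, $B$'s to the right), observing that this labeling is invariant along the morphisms of the classifier, and identifying such leaf-labelings with strata of $t(b)$. This intermediate step is important: the cuts of $W$ need not be in bijection with the strata of $t(b)$ for a fixed $(b,W)$ (the cardinality of $W$ is not in general the number of leaves of $t(b)$), so one cannot directly read off a bijection from cuts of a single $W$; the bijection only emerges after quotienting by the relations, precisely because different cuts can induce the same leaf-labeling. Your phrasing ``projects to a gap in the leaf-ordering of $T$'' is pointing in the right direction, but the proof needs to make explicit that (a) the projection from cuts to leaf-labelings/strata is well-defined and compatible with the bimodule relations, and (b) every stratum is realized and the relations glue exactly the fibers of this projection. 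Once that is spelled out, your argument coincides with the paper's.
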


\begin{proof}
	According to Lemma \ref{lemmabimodulesbimodules} and Lemma \ref{lemmabimodzerothree}, $\zeta-\zeta$-bimodules are equivalent to covariant presheaves over $\Omega_p$. What we need to prove is that $\alpha$ is indeed the initial pointed $\zeta-\zeta$-bimodule of Lemma \ref{lemmaalpha}. As it was pointed out in the proof of Lemma \ref{lemmaquillenequivalences}, the initial pointed $\zeta-\zeta$-bimodule is given by the set of connected components of a classifier. The objects are pairs $(b,W) \in B_{1,3}$. There must be a $0$-dimensional subset $V \subset W$ such that all the vertices in $V_-$ have label $A$ and all the vertices in $V_+$ have label $B$. This means that $b^{\downarrow 2}$ is a linear tree whose vertices have label $A$ or $B$ and all the vertices with label $A$ are on one side and all the vertices with label $B$ are on the other side. This gives labels $A$ and $B$ to the leaves of $t(b)$, where the labels $A$ are to the left and the labels $B$ are to the right. Such labels correspond bijectively to a connected component since it is invariant in each connected component. They also correspond bijectively with the set of strata of $t(b)$. This proves that $\alpha$ is indeed the initial pointed $\zeta-\zeta$-bimodule.
\end{proof}

\subsection{Functors equipped with retractions}

\begin{definition}
	We will say that a morphism in $\Omega_p$ \emph{consists of blowing up a vertex to add a trunk} when it is an inner face map $\partial_e: T/e \to T$, where the vertex directly above $e$ has no inputs:
	\[
	\begin{tikzpicture}[scale=.6]
	\draw (0,-.5) -- (0,0) -- (-1.7,1.7);
	\draw (-1,1) -- (-.3,1.7);
	\draw (0,0) -- (1.7,1.7);
	\draw (1,1) -- (.3,1.7);
	\draw[fill] (0,0) circle (1.5pt);
	\draw[fill] (-1,1) circle (1.5pt);
	\draw[fill] (1,1) circle (1.5pt);
	\draw (0,-.5) node[below]{$T/e$};
	
	\draw (3,0) node{$\longrightarrow$};
	\draw (3,0) node[above]{$\tiny{\partial_e}$};
	
	\begin{scope}[shift={(6,0)}]
	\draw (0,-.5) -- (0,0) -- (-1.7,1.7);
	\draw (-1,1) -- (-.3,1.7);
	\draw[very thick] (0,1) -- (0,0);
	\draw (0,0) -- (1.7,1.7);
	\draw (1,1) -- (.3,1.7);
	\draw[fill] (0,0) circle (1.5pt);
	\draw[fill] (-1,1) circle (1.5pt);
	\draw[fill] (1,1) circle (1.5pt);
	\draw[fill] (0,1) circle (1.5pt);
	\draw (0,-.5) node[below]{$T$};
	\end{scope}
	\end{tikzpicture}
	\]
\end{definition}

\begin{definition}
	We will say that a functor $\mathcal{K}: \Omega_p \to \mathrm{Top}$ is \emph{equipped with retractions} if for all morphisms $\partial_e: T/e \to T$ in $\Omega_p$ consisting of blowing up a vertex to add a trunk, there is a retraction $r_T: \mathcal{K}(T) \to \mathcal{K}(T/e)$ of $\mathcal{K}(\partial_e)$. Moreover, the retractions are natural, that is, for all $h: S \to T$ in $\Omega_p$, the following square commutes:
	\[
	\xymatrix{
		\mathcal{K}(S) \ar[r]^-{r_S} \ar[d]_-{\mathcal{K}(h)} & \mathcal{K}(S/e) \ar[d]^-{\mathcal{K}(h/e)} \\
		\mathcal{K}(T) \ar[r]_-{r_T} & \mathcal{K}(T/e)
	}
	\]
\end{definition}

\subsection{Computation of homotopy limit}

For $n \geq 0$, we will write $[n]$ for the set $\{0,\ldots,n\}$ and $\mathbf{P}[n]$ for the category of subsets of $[n]$ and inclusions.

\begin{definition}
	For $n \geq 0$, let $\mathbf{C}[n]$ be the full subcategory of
	\[
		\{A \xleftarrow{\sigma} B \xrightarrow{\tau} C \xleftarrow{\upsilon} D\} \times \mathbf{P}[n]
	\]
	of pairs $(l,S)$ where
	\begin{itemize}
		\item $S = [i]$ for some $i \in [n]$ if $l=A$,
		\item $S$ is non-empty if $l \in \{B,C\}$,
		\item $S$ is empty if $l = D$.
	\end{itemize}
	For example, here is a picture of $\mathbf{C}[1]$:
	\[
		\xymatrix{
			A,\{0\} \ar[d] & B,\{0\} \ar[l] \ar[ld] \ar[d] \ar[rd] \ar[r] & C,\{0\} \ar[d] \\
			A,\{0,1\} & B,\{0,1\} \ar[l] \ar[r] & C,\{0,1\} & D,\varnothing \ar[l] \ar[lu] \ar[ld] \\
			& B,\{1\} \ar[lu] \ar[u] \ar[ru] \ar[r] & C,\{1\} \ar[u]
		}
	\]
\end{definition}

Recall that for $n \geq 0$, the \emph{topological $n$-simplex} is the topological space:
\[
	\Delta^n = \left\{(x_0,\dots,x_n) \in \mathbb{R}^{n+1} \middle| \text{$\sum_{i=0}^n x_i = 1$ and $x_i \geq 0$ for $0 \leq i \leq n$} \right\}
\]

\begin{lemma}\label{lemmacanonicaliso}
	For $n \geq 0$, there is a canonical isomorphism between the realisation of the nerve of $\mathbf{C}[n]$ and $\Delta^{n+1}$.
\end{lemma}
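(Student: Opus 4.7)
I would proceed by induction on $n$. The base case $n=0$ is immediate: the poset $\mathbf{C}[0]$ has exactly four objects and three non-identity morphisms forming a zigzag
\[
	(A, \{0\}) \leftarrow (B, \{0\}) \to (C, \{0\}) \leftarrow (D, \varnothing),
\]
so its nerve is a path graph on four vertices and three edges, whose realisation is a piecewise-linear interval, canonically identified with $\Delta^1$ by sending the four vertices to four equispaced points of $[0,1]$.

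For the inductive step I construct an explicit piecewise-linear homeomorphism $\Phi_n : |N(\mathbf{C}[n])| \to \Delta^{n+1}$ by specifying the image of each object of $\mathbf{C}[n]$ in $\Delta^{n+1}$ and extending affinely on each simplex of the nerve. Writing $e_0, \ldots, e_{n+1}$ for the standard vertices of $\Delta^{n+1}$, the assignment should send $(D, \varnothing) \mapsto e_0$ and $(A, [i]) \mapsto e_{i+1}$, and place $(B, S)$ and $(C, S)$ at carefully chosen interior points of the faces spanned by $\{e_{i+1}\}_{i \in S}$ and $\{e_0\} \cup \{e_{i+1}\}_{i \in S}$ respectively. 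The positions of these intermediate points must be picked so that distinct objects of $\mathbf{C}[n]$ receive distinct images in $\Delta^{n+1}$ and the images of the top-dimensional simplices of $N(\mathbf{C}[n])$ form a genuine triangulation.

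The inductive hypothesis then organises the verification. The full subposet $\mathbf{C}[n-1] \hookrightarrow \mathbf{C}[n]$ consisting of objects whose subset is contained in $[n-1]$ is mapped by $\Phi_n$ into the face of $\Delta^{n+1}$ spanned by $e_0, \ldots, e_n$, where by induction it already realises a triangulation via $\Phi_{n-1}$. The remaining objects of $\mathbf{C}[n]$ -- those whose subset contains $n$, including $(A, [n])$ -- are mapped into the open star of the new vertex $e_{n+1}$, so that they extend the existing triangulation of the face to a triangulation of all of $\Delta^{n+1}$ by essentially coning over $e_{n+1}$.

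The main obstacle is the combinatorial verification that the images of the top-dimensional simplices of $N(\mathbf{C}[n])$ tile $\Delta^{n+1}$ exactly, with neither overlaps nor gaps. This is handled by a case analysis of the maximal chains in $\mathbf{C}[n]$: by the structure of the underlying spine $A \leftarrow B \to C \leftarrow D$, such a chain either stays within one layer or jumps exactly once between layers, and these flavours match the top-dimensional simplices of the target triangulation. The canonicity of $\Phi_n$ follows from the fact that each choice in its construction is determined by the combinatorial data of $\mathbf{C}[n]$ itself.
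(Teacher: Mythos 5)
Your overall strategy---define an explicit piecewise-linear map on the objects of $\mathbf{C}[n]$ and extend affinely over simplices of the nerve---is the same strategy as the paper's, which writes down an explicit map $f:ob(\mathbf{C}[n]) \to \Delta^{n+1}$ with no induction. However, your prescribed location for the $B$-objects is wrong and leads to collisions, so the argument as written does not go through.

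Concretely, you propose sending $(B,S)$ to an interior point of the face of $\Delta^{n+1}$ spanned by $\{e_{i+1}\}_{i\in S}$, a face that does \emph{not} contain the $D$-vertex $e_0$. When $S$ is a singleton $\{j\}$, that face is the single vertex $e_{j+1}$, so $(B,\{j\})$ is forced to land on $e_{j+1}$. But $(A,[j])$ also maps to $e_{j+1}$ (and when $j\geq 1$, $(B,\{j\})$ is a distinct object from $(A,[j])$ with no morphism identifying them). This already breaks injectivity, and in fact your stated general rule contradicts your own base case: for $n=0$ the rule sends $(B,\{0\})$ to $e_1 = \Phi_0(A,\{0\})$, whereas you separately (and correctly) placed the four objects at four distinct equispaced points of $[0,1]$. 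The inductive framework that restricts to the face $\{e_0,\dots,e_n\}$ is fine in spirit, but the obstacle you flag---checking that the images of maximal chains tile $\Delta^{n+1}$ exactly---cannot be overcome with this vertex placement because the map is not even injective on objects.

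The fix is to place $(B,S)$ in the \emph{same} face as $(C,S)$ (i.e.\ a face containing the $D$-vertex), but at a different ``depth'' towards it, so that the four layers $A,B,C,D$ are distinguished by their coordinate in the $D$-direction. This is exactly what the paper's formula does: with $D\mapsto e_{n+1}$, $(A,[i])\mapsto e_i$, it sets
\[
f(B,S)=\tfrac{2}{3|S|}\sum_{i\in S}e_i+\tfrac13 e_{n+1},\qquad
f(C,S)=\tfrac{1}{3|S|}\sum_{i\in S}e_i+\tfrac23 e_{n+1},
\]
so the $e_{n+1}$-coordinate is $0,\tfrac13,\tfrac23,1$ on the four layers respectively, injectivity is immediate, and the affine extension over the nerve is readily checked to be a triangulation of $\Delta^{n+1}$. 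If you adopt this placement, your inductive organisation (restricting to $\mathbf{C}[n-1]$ and then coning) is a reasonable way to carry out the tiling verification that the paper leaves to the reader.
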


\begin{proof}
	We describe a map $f: ob(\mathbf{C}[n]) \to \Delta^{n+1}$, where $ob(\mathbf{C}[n])$ is the set of objects of $\mathbf{C}[n]$. Let $(e_0,\ldots,e_{n+1})$ be the standard basis of $\mathbb{R}^{n+2}$. For $S \subset [n]$, let $\max(S)$ be the maximum and $|S|$ be the number of elements of $S$. We define
	\[
		f(l,S) = \begin{cases}
		e_{\max(S)} &\text{if $l=A$,} \\
		\frac{2}{3 |S|} \sum_{i \in S} e_i + \frac{1}{3} e_{n+1} &\text{if $l=B$,} \\
		\frac{1}{3 |S|} \sum_{i \in S} e_i + \frac{2}{3} e_{n+1} &\text{if $l=C$,} \\
		e_{n+1} &\text{if $l=D$.}
		\end{cases}
	\]
	It is easy to check that this map induces the desired isomorphism.
\end{proof}

\begin{definition}
	Let $\Omega_p^*$ be the category of elements of the presheaf $\alpha: \Omega_p \to \mathrm{Set}$ which sends a tree to its set of strata. So $\Omega_p^*$ is the category of trees with a chosen stratum.
\end{definition}

In the following lemma, we will consider $[n]$ as a category, where there is a morphism $i \to j$ if $i < j$. It is a subcategory of $\mathbf{C}[n]$ through the inclusion $i \mapsto (A,[i])$. Also, we will write $\gamma_0 \in \Omega_p^*$ for the trunk, with the unique choice of stratum.

\begin{lemma}\label{lemmatechnical}
	For $n \geq 0$, any functor $T: [n] \to \Omega_p^*$ can be naturally extended to a functor $\bar{T}: \mathbf{C}[n] \to \Omega_p^*$ such that, for $S \subset [n]$ non-empty, the map $\bar{T}(\tau,id_S): \bar{T}(B,S) \to \bar{T}(C,S)$ consists of blowing up a vertex to add a trunk and $\bar{T}(D,\varnothing)=\gamma_0$.
\end{lemma}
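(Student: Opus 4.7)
The plan is to define $\bar{T}$ on objects using the chosen-stratum data of $T$ to dictate where trunks are inserted, extend to morphisms in the only way compatible with the prescribed $\bar{T}(\tau, id_S)$ being ``blowing up a vertex to add a trunk'' and with $\bar{T}(D,\varnothing) = \gamma_0$, and finally verify functoriality and naturality in $T$. Write $T(i) = (t_i, s_i) \in \Omega_p^*$, and let $f_{ij} : T(i) \to T(j)$ denote the morphism assigned by $T$ to $i \leq j$; it preserves chosen strata by definition of $\Omega_p^*$. For non-empty $S \subset [n]$ set $k := \max(S)$, and put
\[
\bar{T}(A,[i]) := T(i), \qquad \bar{T}(B,S) := T(k), \qquad \bar{T}(D,\varnothing) := \gamma_0,
\]
while $\bar{T}(C,S) := (t_k^+, s_k^+)$, where $t_k^+$ is obtained from $t_k$ by inserting a trunk at the position singled out by the stratum $s_k$, and $s_k^+$ is the unique stratum of $t_k^+$ bordering the new trunk. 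The choice of $s_k^+$ is forced by the requirement that the arrow $\bar{T}(\upsilon,\iota): \gamma_0 \to \bar{T}(C,S)$ lie in $\Omega_p^*$.

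On morphisms the values are then forced. The arrow $(A,[i]) \to (A,[j])$ is sent to $f_{ij}$; the arrows $(B,S) \to (B,S')$ and $(B,S) \to (A,[j])$ are sent to $f_{k,\max(S')}$ and $f_{k,j}$ respectively. The arrow $(\tau, id_S) : (B,S) \to (C,S)$ is sent to the inner face map $t_k \to t_k^+$ contracting the newly inserted trunk, which by construction \emph{consists of blowing up a vertex to add a trunk} and which lies in $\Omega_p^*$ since it sends $s_k$ to $s_k^+$. The arrow $(id_C, \iota) : (C,S) \to (C,S')$ is sent to the canonical lift $t_k^+ \to t_{k'}^+$ of $f_{k,k'}$, where $k' = \max(S')$: since $f_{k,k'}$ preserves the chosen stratum, the trunk-insertion position corresponding to $s_k$ in $t_k$ is sent to the trunk-insertion position corresponding to $s_{k'}$ in $t_{k'}$, so $f_{k,k'}$ extends uniquely to a morphism between the trees with trunks. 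Finally, $(\upsilon, \iota) : (D,\varnothing) \to (C,S)$ is sent to the inert (outer-face) inclusion of the trunk subtree into $t_k^+$.

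Functoriality amounts to commutativity of the square
\[
\xymatrix{
\bar{T}(B,S) \ar[r] \ar[d] & \bar{T}(C,S) \ar[d] \\
\bar{T}(B,S') \ar[r] & \bar{T}(C,S')
}
\]
for $S \subset S'$, together with the analogous squares involving the $\sigma$- and $\upsilon$-directions of the zigzag. Each such square commutes because the morphism $f_{k,k'}$ is stratum-preserving and hence intertwines the trunk insertions at $s_k$ and $s_{k'}$. Naturality of the assignment $T \mapsto \bar{T}$ is immediate, since every ingredient is expressed purely in terms of $T$ and canonical operations on $\Omega_p^*$.

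The main obstacle is to verify that the operation ``insert a trunk at the chosen stratum'' is itself functorial on $\Omega_p^*$, i.e.\ that any morphism $(t, s) \to (t', s')$ in $\Omega_p^*$ canonically lifts to a morphism $(t^+, s^+) \to (t'^+, s'^+)$ between the trees with trunks. This reduces, via the factorisation system of Remark \ref{remarkinertactiveomegap}, to a separate check for the inert and active parts of a morphism; in each case the lift exists and is unique because the chosen stratum determines the trunk-insertion data unambiguously, reducing the verification to essentially routine bookkeeping.
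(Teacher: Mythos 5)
Your construction is simpler than the paper's in one crucial respect, and that simplification is exactly where the gap lies. You set $\bar{T}(B,S) := T(\max(S))$ and then define $\bar{T}(C,S)$ by ``inserting a trunk at the position singled out by the stratum $s_k$''. But a stratum of a planar tree does \emph{not} single out a trunk-insertion position: a trunk is added by blowing up a specific vertex, and the data required for this is a vertex together with a gap between its children, not merely a stratum. A single stratum can border several vertices (for instance, in a linear tree every vertex borders the two strata; or in a tree with root $u$ having children $a$, $v$ and $v$ having one leaf $b$, the middle stratum borders both $u$ and $v$), and inserting the trunk at these different vertices yields genuinely different trees. Your claim near the end that ``the chosen stratum determines the trunk-insertion data unambiguously'' is therefore false, and without it the asserted functoriality of ``insert a trunk at the chosen stratum'' has no content. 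No ad hoc choice of vertex (say, the lowest one bordering $s_k$) rescues the argument either, since the required commuting squares (e.g.\ from $(B,S)\to(C,S)$ to $(B,S')\to(C,S')$) then fail when the map $T(\max(S))\to T(\max(S'))$ includes outer faces that change which vertex is lowest.

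The paper's construction is engineered precisely to eliminate this ambiguity, and this is the idea your proof is missing. Rather than taking $\bar{T}(B,S) = T(\max(S))$, the paper defines $\bar{T}(B,S)$ by \emph{contracting} almost all internal edges of $T(\max(S))$: using the active--inert factorisations $T(i)\twoheadrightarrow T_{i,\max(S)}\rightarrowtail T(\max(S))$ for $i\in S$, one draws nested circles $c_{i,\max(S)}$ around the subtrees $T_{i,\max(S)}$ and contracts every internal edge not crossed by one of these circles. Inside the innermost circle there is then a \emph{single} vertex, so the chosen stratum determines a unique gap at a unique vertex, and $\bar{T}(C,S)$ is obtained by blowing up that vertex. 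This also makes the morphisms of $\mathbf{C}[n]$ land correctly in $\Omega_p$: $\bar{T}(\sigma,id_{[i]})$ is active (un-contracting), $\bar{T}(id_B, S\to S\amalg i)$ is active if $i\leq\max(S)$ and inert if $i>\max(S)$, and $\bar{T}(\upsilon,\varnothing\to S)$ is the inert inclusion of $\gamma_0$. A further small point you miss: $T(\max(S))$ may be the free living edge, with no vertex at all to blow up; the paper handles this by inserting a unary vertex first.
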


\begin{proof}
	Let $T: [n] \to \Omega_p^*$ be a functor. We will extend it to a functor $\bar{T}: \mathbf{C}[n] \to \Omega_p^*$. By assumption, we should have $\bar{T}(A,[i]) = T(i)$ for $i \in [n]$, and $\bar{T}(D,\varnothing)$ should be the trunk. It remains to define $\bar{T}(B,S)$ and $\bar{T}(C,S)$ for $S \subset [n]$ non-empty. For a non-trivial map $i \to j$ in $[n]$, using Remark \ref{remarkinertactiveomegap}, there is $T_{ij} \in \Omega_p^*$ such that $T(i) \to T(j)$ factorises as $T(i) \twoheadrightarrow T_{ij} \rightarrowtail T(j)$, where the first map is active and the second map is inert. Observe that we can add a circle $c_{ij}$ on the tree $T(j)$ such that the tree inside this circle is $T_{ij}$:
	\[
	\begin{tikzpicture}[scale=1.5]
	\draw (0,-.3) -- (0,0) -- (-.5,.5);
	\draw (-.166,.5) -- (0,0) -- (.166,.5);
	\draw (.5,.5) -- (0,0);
	
	\draw (0,-.3) node[below]{$T(0)$};
	
	\draw[fill] (0,0) circle (.6pt);
	
	\draw (1.5,0) node{$\longrightarrow$};
	
	\begin{scope}[shift={(3,0)}]
	\draw (0,-.3) -- (0,0) -- (-.75,.75);
	\draw (-.5,.75) -- (-.5,.5) -- (-.125,.875);
	\draw (-.25,.75) -- (-.375,.875);
	\draw (0,0) -- (.75,.75);
	\draw (.25,.75) -- (.5,.5) -- (.5,.75);
	
	\draw (-.03,.47) node{$\tiny{c_{01}}$};
	
	\draw (0,-.3) node[below]{$T(1)$};
	
	\draw[fill] (0,0) circle (.6pt);
	\draw[fill] (-.5,.5) circle (.6pt);
	\draw[fill] (-.25,.75) circle (.6pt);
	\draw[fill] (.5,.5) circle (.6pt);
	
	\draw[ultra thin] ({-.15*cos(45)},{-.15*sin(45)}) arc (-135:45:.15) -- ({.15*cos(45)-.5},{.15*sin(45)+.5}) arc (45:225:.15) -- ({-.15*cos(45)},{-.15*sin(45)});
	\end{scope}
	
	\draw (4.5,0) node{$\longrightarrow$};
	
	\begin{scope}[shift={(6,0)}]
	\draw (0,-.3) -- (0,0) -- (-.875,.875);
	\draw (-.5,.5) -- (-.125,.875);
	\draw (-.75,.75) -- (-.625,.875);
	\draw (-.25,.75) -- (-.375,.875);
	\draw (0,0) -- (.875,.875);
	\draw (.5,.5) -- (.125,.875);
	\draw (.25,.75) -- (.375,.875);
	\draw (.75,.75) -- (.625,.875);
	
	\draw (.48,.06) node{$\tiny{c_{12}}$};
	\draw (-.08,.42) node{$\tiny{c_{02}}$};
	
	\draw (0,-.3) node[below]{$T(2)$};
	
	\draw[fill] (0,0) circle (.6pt);
	\draw[fill] (-.5,.5) circle (.6pt);
	\draw[fill] (-.75,.75) circle (.6pt);
	\draw[fill] (-.25,.75) circle (.6pt);
	\draw[fill] (.5,.5) circle (.6pt);
	\draw[fill] (.25,.75) circle (.6pt);
	\draw[fill] (.75,.75) circle (.6pt);
	
	\draw[ultra thin] ({-.08*cos(45)},{-.08*sin(45)}) arc (-135:45:.08) -- ({.08*cos(45)-.75},{.08*sin(45)+.75});
	\draw[ultra thin] ({-.08*cos(45)},{-.08*sin(45)}) -- ({-.08*cos(45)-.75},{-.08*sin(45)+.75});
	\draw[ultra thin] ({.08*cos(45)-.75},{.08*sin(45)+.75}) arc (45:225:.08);
	
	\draw[ultra thin] ({-.12*cos(45)},{-.12*sin(45)}) arc (-135:-45:.12);
	\draw[ultra thin] ({.12*cos(45)},{-.12*sin(45)}) -- ({.12*cos(45)+.75},{-.12*sin(45)+.75});
	\draw[ultra thin] ({.12*cos(45)+.75},{-.12*sin(45)+.75}) arc (-45:135:.12);
	\draw[ultra thin] ({-.12*cos(45)+.75},{.12*sin(45)+.75}) -- ({.12*cos(45)+.25},{.36*sin(45)+.25});
	\draw[ultra thin] ({-.12*cos(45)+.25},{.36*sin(45)+.25}) arc (-135:-45:.12);
	\draw[ultra thin] ({-.12*cos(45)+.25},{.36*sin(45)+.25}) -- ({.12*cos(45)-.25},{.12*sin(45)+.75});
	\draw[ultra thin] ({.12*cos(45)-.25},{.12*sin(45)+.75}) arc (45:135:.12);
	\draw[ultra thin] ({-.12*cos(45)-.25},{.12*sin(45)+.75}) -- ({.12*cos(45)-.5},{.36*sin(45)+.5});
	\draw[ultra thin] ({-.12*cos(45)-.5},{.36*sin(45)+.5}) arc (-135:-45:.12);
	\draw[ultra thin] ({-.12*cos(45)-.5},{.36*sin(45)+.5}) -- ({.12*cos(45)-.75},{.12*sin(45)+.75});
	
	\draw[ultra thin] ({.12*cos(45)-.75},{.12*sin(45)+.75}) arc (45:225:.12);

	\draw[ultra thin] ({-.12*cos(45)},{-.12*sin(45)}) -- ({-.12*cos(45)-.75},{-.12*sin(45)+.75});
	\end{scope}
	\end{tikzpicture}
	\]
	We define $\bar{T}(B,S)$ as the tree obtained from $T(\max(S))$ by contracting all the internal edges except the ones that are crossed by a circle $c_{ij}$ for $i,j \in S$ and $j=\max(S)$. Note that in particular, $\bar{T}(B,\{i\})$ is the tree obtained from $T(i)$ by contracting all the internal edges. In the case of the free living edge, we add a unary vertex. For example, if we start with $T: [2] \to \Omega_p^*$ as in the previous picture, we will get the following trees $\bar{T}(B,S)$ for $S \subset [2]$ non-empty (forgetting about the chosen strata):
	\[
		\begin{tikzpicture}[scale=1.1]
			\begin{scope}
			\draw (0,0) -- (0,-.2);
			\draw (-.4,.5) -- (0,0) -- (.4,.5);
			\draw (-.2,.5) -- (0,0) -- (0,.5);
			\draw (.25,.8) -- (.4,.5) -- (.55,.8);
			\draw (.4,.5) -- (.4,.8);
			\draw (-.1,.8) -- (0,.5) -- (.1,.8);
			\draw (.18,.98) -- (.25,.8) -- (.32,.98);
			\draw[fill] (0,0) circle (.8pt);
			\draw[fill] (0,.5) circle (.8pt);
			\draw[fill] (.4,.5) circle (.8pt);
			\draw[fill] (.25,.8) circle (.8pt);
			\draw (0,-.2) node[below]{$\tiny{\{0,1,2\}}$};
			\end{scope}
			\begin{scope}[shift={({3*cos(30)},{3*sin(30)})}]
			\draw (0,-.2) -- (0,0) -- (0,.5);
			\draw (-.6,.5) -- (0,0) -- (.6,.5);
			\draw (-.4,.5) -- (0,0) -- (.4,.5);
			\draw (-.2,.5) -- (0,0) -- (.2,.5);
			\draw (.1,.8) -- (.2,.5) -- (.3,.8);
			\draw[fill] (.2,.5) circle (.8pt);
			\draw[fill] (0,0) circle (.8pt);
			\draw (0,-.2) node[below]{$\tiny{\{1,2\}}$};
			\end{scope}
			\begin{scope}[shift={({3*cos(90)},{3*sin(90)})}]
			\draw (0,-.2) -- (0,0) -- (0,.5);
			\draw (-.6,.5) -- (0,0) -- (.6,.5);
			\draw (-.4,.5) -- (0,0) -- (.4,.5);
			\draw (-.2,.5) -- (0,0) -- (.2,.5);
			\draw[fill] (0,0) circle (.8pt);
			\draw (0,-.2) node[below]{$\tiny{\{1\}}$};
			\end{scope}
			\begin{scope}[shift={({3*cos(150)},{3*sin(150)})}]
			\draw (0,0) -- (0,-.2);
			\draw (-.4,.5) -- (0,0) -- (.4,.5);
			\draw (-.2,.5) -- (0,0) -- (0,.5);
			\draw (.25,.8) -- (.4,.5) -- (.55,.8);
			\draw (.4,.5) -- (.4,.8);
			\draw (-.1,.8) -- (0,.5) -- (.1,.8);
			\draw[fill] (0,0) circle (.8pt);
			\draw[fill] (0,.5) circle (.8pt);
			\draw[fill] (.4,.5) circle (.8pt);
			\draw (0,-.2) node[below]{$\tiny{\{0,1\}}$};
			\end{scope}
			\begin{scope}[shift={({3*cos(210)},{3*sin(210)})}]
			\draw (0,0) -- (0,-.2);
			\draw (-.42,.5) -- (0,0) -- (.42,.5);
			\draw (-.14,.5) -- (0,0) -- (.14,.5);
			\draw[fill] (0,0) circle (.8pt);
			\draw (0,-.2) node[below]{$\tiny{\{0\}}$};
			\end{scope}
			\begin{scope}[shift={({3*cos(270)},{3*sin(270)})}]
			\draw (0,0) -- (0,-.2);
			\draw (-.4,.5) -- (0,0) -- (.4,.5);
			\draw (-.2,.5) -- (0,0) -- (0,.5);
			\draw (.22,.8) -- (.4,.5) -- (.58,.8);
			\draw (.34,.8) -- (.4,.5) -- (.46,.8);
			\draw (-.1,.8) -- (0,.5) -- (.1,.8);
			\draw[fill] (0,0) circle (.8pt);
			\draw[fill] (0,.5) circle (.8pt);
			\draw[fill] (.4,.5) circle (.8pt);
			\draw[fill] (0,0) circle (.8pt);
			\draw (0,-.2) node[below]{$\tiny{\{0,2\}}$};
			\end{scope}
			\begin{scope}[shift={({3*cos(330)},{3*sin(330)})}]
			\draw (0,0) -- (0,-.2);
			\draw (-.63,.5) -- (0,0) -- (.63,.5);
			\draw (-.45,.5) -- (0,0) -- (.45,.5);
			\draw (-.27,.5) -- (0,0) -- (.27,.5);
			\draw (-.09,.5) -- (0,0) -- (.09,.5);
			\draw[fill] (0,0) circle (.8pt);
			\draw (0,-.2) node[below]{$\tiny{\{2\}}$};
			\end{scope}
			\draw ({1.5*cos(30)},{1.5*sin(30)}) node[rotate=30]{$\longleftarrow$};
			\draw ({1.5*cos(90)},{1.5*sin(90)}) node[rotate=90]{$\longleftarrow$};
			\draw ({1.5*cos(150)},{1.5*sin(150)}) node[rotate=150]{$\longleftarrow$};
			\draw ({1.5*cos(210)},{1.5*sin(210)}) node[rotate=210]{$\longleftarrow$};
			\draw ({1.5*cos(270)},{1.5*sin(270)}) node[rotate=270]{$\longleftarrow$};
			\draw ({1.5*cos(330)},{1.5*sin(330)}) node[rotate=330]{$\longleftarrow$};
			\draw ({3*cos(30)},0) node[rotate=90]{$\longrightarrow$};
			\draw ({1.5*cos(30)},2.25) node[rotate=-30]{$\longrightarrow$};
			\draw ({-1.5*cos(30)},2.25) node[rotate=30]{$\longleftarrow$};
			\draw ({-3*cos(30)},0) node[rotate=90]{$\longrightarrow$};
			\draw ({-1.5*cos(30)},-2.25) node[rotate=-30]{$\longrightarrow$};
			\draw ({1.5*cos(30)},-2.25) node[rotate=30]{$\longleftarrow$};
		\end{tikzpicture}
	\]
	$\bar{T}(C,S)$ is the same tree as $\bar{T}(B,S)$ but where the vertex in the most inside circle is blown up to add a trunk in the chosen strata. Such $\bar{T}$ defined on objects extends to a functor. For $i \in [n]$, $\bar{T}(\sigma,id_{[i]})$ is an active. For $S \subset [n]$ non-empty, $\bar{T}(\tau,id_S)$ consists of blowing up a vertex to add a trunk, as required. For $i \notin S$, the maps $\bar{T}(id_B, S \to S \amalg i)$ and $\bar{T}(id_C, S \to S \amalg i)$ are active if $i \leq \max(S)$ and inert if $i > \max(S)$. Finally, the map $\bar{T}(\upsilon, \varnothing \to S)$ is the inert map given by inclusion of $\gamma_0$ to the trunk of $\bar{T}(C,S)$ which was added from $\bar{T}(B,S)$.
\end{proof}

Let $\Delta^\bullet: \Delta \to \mathrm{Top}$ be the cosimplicial space which sends $n$ to $\Delta^n$.

\begin{lemma}\label{lemmadescriptionholim}
	Let $\mathcal{C}$ be a category and $\mathcal{F}: \mathcal{C} \to \mathrm{Top}$ be a functor. Then giving $\beta \in \holim \mathcal{F}$ is the same as naturally describing, for all $n \geq 0$ and $T: [n] \to \mathcal{C}$, a map $\beta(T): \Delta^n \to \mathcal{F}T(n)$. \emph{Naturally} means that for all $f: [m] \to [n]$,
	\begin{equation}\label{equationnaturalholim}
		\beta(T) \Delta^f = \mathcal{F}T(f(m) \to n) \beta(Tf).
	\end{equation}
\end{lemma}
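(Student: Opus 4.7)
The plan is to identify $\holim \mathcal{F}$ with the totalisation of the Bousfield--Kan cosimplicial replacement and then unwind what an element of that totalisation is.

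Concretely, I would model $\holim \mathcal{F}$ by the cosimplicial space $X^\bullet$ with
\[
	X^n = \prod_{T : [n] \to \mathcal{C}} \mathcal{F}T(n),
\]
and, for $f : [m] \to [n]$, cosimplicial operator $X^f : X^m \to X^n$ whose component at $T : [n] \to \mathcal{C}$ is the composite
\[
	X^m = \prod_{T' : [m] \to \mathcal{C}} \mathcal{F}T'(m) \xrightarrow{\mathrm{pr}_{Tf}} \mathcal{F}(Tf)(m) = \mathcal{F}T(f(m)) \xrightarrow{\mathcal{F}T(f(m) \to n)} \mathcal{F}T(n).
\]
I would note that this is the standard Bousfield--Kan formula, so $\holim \mathcal{F} \simeq \mathrm{Tot}(X^\bullet)$, and that, by definition of $\mathrm{Tot}$, an element of this totalisation is a cosimplicial map $\beta^\bullet : \Delta^\bullet \to X^\bullet$, that is, a collection of maps $\beta^n : \Delta^n \to X^n$ satisfying $\beta^n \circ \Delta^f = X^f \circ \beta^m$ for every $f : [m] \to [n]$.

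Next I would use the product decomposition of $X^n$ to identify $\beta^n$ with the family of its components $\beta(T) : \Delta^n \to \mathcal{F}T(n)$, indexed by functors $T : [n] \to \mathcal{C}$. Projecting the equation $\beta^n \circ \Delta^f = X^f \circ \beta^m$ onto the factor indexed by a given $T : [n] \to \mathcal{C}$ and applying the description of $X^f$ above yields exactly
\[
	\beta(T) \circ \Delta^f = \mathcal{F}T(f(m) \to n) \circ \beta(Tf),
\]
which is equation \ref{equationnaturalholim}. Conversely, any family $(\beta(T))_{T}$ satisfying this condition assembles via the product to a cosimplicial map $\Delta^\bullet \to X^\bullet$, and hence to an element of $\holim \mathcal{F}$.

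There is no real obstacle here beyond bookkeeping: the argument is a direct unwinding of the Bousfield--Kan model for homotopy limits together with the universal property of a product. The only point requiring care is keeping track of which variance is involved — in particular that the index $T$ runs over covariant functors $[n] \to \mathcal{C}$, and that the component of $X^f$ at $T$ is extracted via $Tf : [m] \to \mathcal{C}$ and then pushed forward along $f(m) \to n$ by $\mathcal{F}$.
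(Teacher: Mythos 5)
Your proof is correct and follows essentially the same route as the paper: model $\holim\mathcal{F}$ as the totalisation of the Bousfield--Kan cosimplicial replacement and unwind the definition of $\mathrm{Tot}$. The only discrepancy worth noting is that the paper writes the cosimplicial replacement as a coproduct $\coprod_{T:[n]\to\mathcal{C}}\mathcal{F}T(n)$, which must be a typo for the product $\prod_{T:[n]\to\mathcal{C}}\mathcal{F}T(n)$ as you correctly have it; your more explicit description of the coface/codegeneracy operators and the projection argument is a welcome filling-in of details the paper leaves implicit.
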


\begin{proof}
	Let $c \mathcal{F}: \Delta \to \mathrm{Top}$ be the cosimplicial space given by
	\[
		(c \mathcal{F})_n = \coprod_{T: [n] \to C} \mathcal{F}T(n).
	\]
	The homotopy limit of $\mathcal{F}$ is the totalization of $c \mathcal{F}$. This gives us the desired result.
\end{proof}

Let $\pi: \Omega_p^* \to \Omega_p$ be the projection functor to the base. Concretely, $\pi$ forgets the strata.

\begin{lemma}\label{lemmaholimcontractible}
	Let $\mathcal{K}: \Omega_p \to \mathrm{Top}$ equipped with retractions. Then the homotopy limit of $\pi^*\mathcal{K}$ is given by its value at the trunk.
\end{lemma}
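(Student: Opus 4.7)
My plan is to build mutually inverse weak equivalences between $\holim \pi^*\mathcal{K}$ and $\mathcal{K}(\gamma_0)$ by combining the explicit description of the homotopy limit in Lemma~\ref{lemmadescriptionholim} with the extension procedure of Lemma~\ref{lemmatechnical}, the simplex identification of Lemma~\ref{lemmacanonicaliso}, and the retraction structure on $\mathcal{K}$. By Lemma~\ref{lemmadescriptionholim}, an element of $\holim \pi^*\mathcal{K}$ is a natural family $\beta(T) \colon \Delta^n \to \mathcal{K}(\pi T(n))$ indexed by $T \colon [n] \to \Omega_p^*$, and evaluation at the constant functor $[0] \to \Omega_p^*$ at $\gamma_0$ gives the obvious map $\rho \colon \holim \pi^*\mathcal{K} \to \mathcal{K}(\gamma_0)$.

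For the candidate inverse $\sigma$, given $x \in \mathcal{K}(\gamma_0)$ and $T \colon [n] \to \Omega_p^*$, I would extend $T$ to $\bar{T} \colon \mathbf{C}[n] \to \Omega_p^*$ via Lemma~\ref{lemmatechnical}. For every non-empty $S \subseteq [n]$, the morphism $\bar{T}(\tau, \mathrm{id}_S) \colon \bar{T}(B, S) \to \bar{T}(C, S)$ is a trunk insertion and so, after applying $\mathcal{K}$, admits a retraction $r_S$. Transporting $x$ along the zigzag
\[
\mathcal{K}(\gamma_0) \xrightarrow{\mathcal{K}\bar{T}(\upsilon)} \mathcal{K}(\bar{T}(C,S)) \xrightarrow{r_S} \mathcal{K}(\bar{T}(B,S)) \xrightarrow{\mathcal{K}\bar{T}(\sigma)} \mathcal{K}(T(n))
\]
and using the canonical identification $|N\mathbf{C}[n]| \cong \Delta^{n+1}$ of Lemma~\ref{lemmacanonicaliso}, the collection of these values together with the higher compatibilities encoded in $\bar{T}$ glue into a continuous map $\Delta^{n+1} \to \mathcal{K}(T(n))$. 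Because the $A$-row realises to the $\Delta^n$-face opposite the vertex coming from $(D, \varnothing)$, restriction to this face produces the desired $\beta_x(T) \colon \Delta^n \to \mathcal{K}(\pi T(n))$. Naturality of $\bar{T}$ and of the retractions yields naturality of $\beta_x$ in $T$, hence an element $\sigma(x) \in \holim \pi^*\mathcal{K}$.

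The identity $\rho\sigma = \mathrm{id}$ is then tautological at $n = 0$. The main obstacle is the homotopy $\sigma\rho \simeq \mathrm{id}$. The idea is that for any $\beta$, postcomposition with $\bar{T}$ assembles the family $\{\beta(\bar{T} \circ G)\}_G$ indexed by functors $G \colon [m] \to \mathbf{C}[n]$; using the compositions with the structure maps to $\bar{T}(A, [n]) = T(n)$ and the retractions $r_S$ to invert the trunk insertions, this data realises under Lemma~\ref{lemmacanonicaliso} to a continuous map $\Delta^{n+1} \to \mathcal{K}(T(n))$ whose restriction to the $A$-face is $\beta(T)$ and whose value at the $D$-vertex is the transport of $\rho(\beta)$. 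Viewing $\Delta^{n+1}$ as the cone on $\Delta^n$ with apex the $D$-vertex, this map is precisely a homotopy from $\beta(T)$ to $\sigma\rho(\beta)(T)$. The hard part is verifying that these homotopies glue naturally across all $T \colon [n] \to \Omega_p^*$; this will rely crucially on the naturality axiom of the retractions along arbitrary $h \colon S \to T$ in $\Omega_p$, which ensures that the $r_S$-steps commute with the morphisms in $\Omega_p^*$ induced by the functoriality of $T$.
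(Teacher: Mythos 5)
Your overall strategy is the same as the paper's: define an evaluation map $q\colon \holim \pi^*\mathcal{K} \to \mathcal{K}(\gamma_0)$ and a section $j$ built from Lemma~\ref{lemmatechnical} plus the retractions, observe $qj = \mathrm{id}$, and prove $jq \simeq \mathrm{id}$ by realising $|N\mathbf{C}[n]| \cong \Delta^{n+1}$ as a cone on $\Delta^n$ with apex the $D$-vertex. Two points deserve attention. First, your description of $\sigma$ (the paper's $j$) is overengineered and somewhat muddled: in the paper, $j(\xi)(T)$ is simply the \emph{constant} map $\Delta^n \to \mathcal{K}(T(n))$ whose value is $\xi$ transported along the single zigzag $\gamma_0 \to \bar{T}(C,[n]) \leftarrow \bar{T}(B,[n]) \to T(n)$ (using the retraction to invert the middle arrow); there is no need to assemble a map $\Delta^{n+1}\to\mathcal{K}(T(n))$ and restrict to the $A$-face, and it is unclear that your proposed assembly from a discrete collection of transported points would glue into a continuous map at all. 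The naturality condition of Lemma~\ref{lemmadescriptionholim} for this constant family is then exactly what the naturality-of-retractions axiom is used for at this stage.

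Second, for $jq \simeq \mathrm{id}$, you correctly identify that the assembly into $\Delta^{n+1} \to \mathcal{K}(T(n))$ is the crux, but you only flag the well-definedness verification without carrying it out. The paper does this concretely: it builds $H(\beta)(T)$ face-by-face from the non-degenerate functors $i\colon [n+1] \to \mathbf{C}[n]$ (distinguishing $i(n+1) = (A,[n])$ from $i(n+1) = (C,[n])$, inserting the retraction only in the second case), and then explicitly checks the matching condition $H(\beta)(T)|_{i_A}\,\delta_{n+1} = H(\beta)(T)|_{i_C}\,\delta_{n+1}$ using the holim naturality \ref{equationnaturalholim} and the retraction identity $r\circ\mathcal{K}(\partial_e) = \mathrm{id}$. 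This is the actual work of the proof; as written, your proposal names the difficulty but does not resolve it.
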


\begin{proof}
	Let us write $\mathcal{L} = \pi^*(\mathcal{K})$ and $\gamma_0 \in \Omega_p^*$ be the trunk. We will construct two maps
	\[
	\xymatrix{
		\holim \mathcal{L} \ar@<.5ex>[r]^-q  & \mathcal{L}(\gamma_0) \ar@<.5ex>[l]^-j
	}
	\]
	and prove that they are homotopy inverse of each other. 
	The map $q$ is the projection which sends $\beta \in \holim \mathcal{L}$ to the point given by the map $\beta(\gamma_0): \Delta^0 \to \mathcal{L}(\gamma_0)$ of Lemma \ref{lemmadescriptionholim}. We can construct the map $j$ using Lemma \ref{lemmatechnical} and the assumption that $\mathcal{K}$ is equipped with retractions. Let $\xi \in \mathcal{L}(\gamma_0)$. A functor $T: [n] \to \Omega_p^*$ can be extended to $\bar{T}: \mathbf{C}[n] \to \Omega_p^*$ according to Lemma \ref{lemmatechnical}. We have the zigzag
	\begin{equation}\label{zigzag}
		\xymatrix{
			\gamma_0 \ar[rr]^-{\bar{T}(\nu,id_{[n]})} && \bar{T}(C,[n]) && \bar{T}(B,[n]) \ar[ll]_-{\bar{T}(\tau,id_{[n]})}  \ar[rr]^-{\bar{T}(\sigma,id_{[n]})} && T(n)
		}
	\end{equation}
	where the map in the middle consists of blowing up a vertex to add the trunk. Applying $\mathcal{L}$ to this zigzag gives us another zigzag, where the map in the middle has a retraction, since $\mathcal{K}$ is equipped with retractions. Then $j(\xi)(T)$ given is by the composite $\Delta^n \to \mathcal{L}(\gamma_0) \to \mathcal{L}T(n)$, where the first map is constant to $\xi$ and the second map is the composite obtained by applying $\mathcal{L}$ to \ref{zigzag} and replacing the map in the middle by its retraction. 
	The composite $qj$ is the identity. We will now prove that the composite $jq$ is also homotopic to the identity.
	
	Let us fix $\beta \in \holim \mathcal{L}$ and a functor $T: [n] \to \Omega_p^*$, which can again be extended to $\bar{T}: \mathbf{C}[n] \to \Omega_p^*$. We will describe a map $H(\beta)(T): \Delta^{n+1} \to \mathcal{L}T(n)$. Let $i: [n+1] \to \mathbf{C}[n]$ non-degenerate, that is $i(f) \neq id$ if $f \neq id$. We define $H(\beta)(T)|_i: \Delta^{n+1} \to \mathcal{L}T(n)$ as follows. Using Lemma \ref{lemmadescriptionholim}, we can associate to the functor $\bar{T}i$ a map $\beta(\bar{T}i) : \Delta^{n+1} \to \mathcal{L}\bar{T}i(n+1)$. Note that since $i$ is non-degenerate, $i(n+1)=(A,[n])$ or $i(n+1)=(C,[n])$. So $\bar{T}i(n+1) = T(n)$ or $\bar{T}i(n+1)=\bar{T}(C,[n])$. We define
	\[
		H(\beta)(T)|_i = \begin{cases}
		\beta(\bar{T}i) &\text{if $i(n+1) = (A,[n])$,}\\
		\mathcal{L}\bar{T}(\sigma,id_{[n]}) \mathcal{L}\bar{T}(\tau,id_{[n]})^{-1} \beta(\bar{T}i) &\text{if $i(n+1)=(C,[n])$,}
		\end{cases}
	\]
	where $\mathcal{L}\bar{T}(\tau,id_{[n]})^{-1}$ is the retraction of $\mathcal{L}\bar{T}(\tau,id_{[n]})$. According to Lemma \ref{lemmacanonicaliso}, $i$ induces a canonical map $|N|(i): \Delta^{n+1} \to \Delta^{n+1}$. We can define the restriction of $H(\beta)(T)$ to the image of $|N|(i)$ to be given by $H(\beta)(T)|_i$. It remains to check that $H(\beta)(T)$ is well-defined. Let $i_A,i_C: [n+1] \to \mathbf{C}[n]$ be two non-degenerate functors such that $i_A(n+1) = (A,[n])$, $i_C(n+1) = (C,[n])$ and $i := i_A d_{n+1} = i_C d_{n+1}$, where $d_{n+1}: [n] \to [n+1]$ is the inclusion. Note that $i_A(n \to n+1) = (\sigma,id_{[n]})$ and $i_C(n \to n+1) = (\tau,id_{[n]})$. 
	Using the naturality \ref{equationnaturalholim}, we have
	\begin{equation*}
	\begin{split}
	H(\beta)(T)|_{i_A} \delta_{n+1} &= \beta(\bar{T} i_A) \delta_{n+1}, \\
	&= \mathcal{L}\bar{T}(\sigma,id_{[n]}) \beta(\bar{T} i), \\
	&= \mathcal{L}\bar{T}(\sigma,id_{[n]}) \mathcal{L}\bar{T}(\tau,id_{[n]})^{-1} \mathcal{L}\bar{T}(\tau,id_{[n]}) \beta(\bar{T} i), \\
	&= \mathcal{L}\bar{T}(\sigma,id_{[n]}) \mathcal{L}\bar{T}(\tau,id_{[n]})^{-1} \beta(\bar{T} i_C) \delta_{n+1}, \\
	&= H(\beta)(T)|_{i_C} \delta_{n+1},
	\end{split}
	\end{equation*}
	where $\delta_{n+1} = \Delta^{d_{n+1}}: \Delta^n \to \Delta^{n+1}$. 
	This proves that $H(\beta)(T)$ is well-defined.
	
	Finally, the desired homotopy $H: \left[0,1\right] \times \holim \mathcal{L} \to \holim \mathcal{L}$ is given by $H(t,\beta)(T)(x) = H(\beta)(T)((1-t)x,t)$.
\end{proof}

\subsection{The case $(m,n)=(1,3)$}

\begin{theorem}
	We do have the fibration sequence \ref{fibrationsequence} for $(m,n) = (1,3)$ with the extra condition that $X$ is such that $v^*X$ is equipped with retractions.
\end{theorem}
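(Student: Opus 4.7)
The plan is to reduce this to the general fibration sequence of Lemma \ref{lemmafibrationsequence} by computing its third term $\mathrm{Map}_{\mathrm{Bimod}_{\zeta,\zeta}}(\alpha, v^*X)$ under the retractions hypothesis, in the same spirit as the proof of Theorem \ref{theoremtdhgeneral}. Since $m=1$ and $n=3$ satisfy $n-2 \leq m$, Lemma \ref{lemmabimodulesbimodules} identifies $\mathrm{Bimod}_{\zeta,\zeta}$ with $\mathrm{Bimod}_{0,3}$, so the first two terms of \ref{fibrationsequence} and of the sequence of Lemma \ref{lemmafibrationsequence} already agree. It remains to produce a weak equivalence
\[
\mathrm{Map}_{\mathrm{Bimod}_{\zeta,\zeta}}(\alpha, v^*X) \simeq \prod_{i \in I_0} X_{\kappa(i)}.
\]

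First I would unpack the right-hand side. The composite $\kappa: I_0 \to B_1 \to B_3 \to I_3$ sends the unique element of $I_0$ to the empty linear tree in $B_1$, whose image under the unit $\eta_2$ is the trunk $\gamma_0 \in B_2$ (the corolla with no leaves); one more application of the unit followed by $t_3$ returns the same trunk. So, under the identifications of Lemmas \ref{lemmabimodulesbimodules} and \ref{lemmabimodzerothree}, and writing $\mathcal{K} := v^*X$ viewed as a covariant presheaf on $\Omega_p$, the right-hand side is simply $\mathcal{K}(\gamma_0)$.

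For the left-hand side, Lemma \ref{lemmapbimodzerothree} identifies $\alpha$ with the functor on $\Omega_p$ sending a tree to the discrete space of its strata, which is canonically the colimit over its category of elements $\Omega_p^*$ of the corepresentables $h^{\pi(-)}$, with $\pi : \Omega_p^* \to \Omega_p$ the projection. Using left properness of $\mathrm{Bimod}_{\zeta,\zeta}$ (Lemma \ref{lemmaleftproper}) to pass to the derived mapping space, a standard bar-resolution argument gives a natural weak equivalence
\[
\mathrm{Map}_{\mathrm{Bimod}_{\zeta,\zeta}}(\alpha, \mathcal{K}) \simeq \holim_{\Omega_p^*} \pi^*\mathcal{K}.
\]
The retractions hypothesis on $v^*X$ is precisely what Lemma \ref{lemmaholimcontractible} requires in order to collapse this homotopy limit onto $\mathcal{K}(\gamma_0)$; chaining all these equivalences with Lemma \ref{lemmafibrationsequence} then produces \ref{fibrationsequence} for $(m,n)=(1,3)$.

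The main technical obstacle is replacing the set-theoretic colimit presentation $\alpha = \mathrm{colim}_{\Omega_p^*} h^{\pi(-)}$ by a genuinely homotopy-colimit presentation compatible with the model structure on $\mathrm{Bimod}_{\zeta,\zeta}$, so that mapping out yields a homotopy limit rather than a strict one. Once that identification is secured, the extra retractions hypothesis enters only through the application of Lemma \ref{lemmaholimcontractible}, whose proof uses the retractions along the trunk-blow-up maps $\bar{T}(\tau, id_{[n]})$ to contract the homotopy limit onto its value at $\gamma_0$.
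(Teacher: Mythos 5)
Your proposal is correct and follows the paper's argument essentially step by step: reduce to Lemma \ref{lemmafibrationsequence}, identify the third term via $\alpha$ and the category $\Omega_p^*$, and close with Lemma \ref{lemmaholimcontractible} using the retractions hypothesis. The ``main technical obstacle'' you flag --- promoting the set-theoretic colimit presentation of $\alpha$ to a derived statement --- is dispatched in the paper simply by observing that $\pi: \Omega_p^* \to \Omega_p$ is a discrete fibration, so that $\pi_!$ is a coproduct over fibres, $\pi_!(1) = \alpha$ holds on the nose and is homotopy-invariant, and the adjunction $\pi_! \dashv \pi^*$ then gives $\mathrm{Map}(\alpha, Y) \sim \holim_{\Omega_p^*} \pi^* Y$ directly, with no bar resolution needed.
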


\begin{proof}
	Since $\pi: \Omega_p^* \to \Omega_p$ is a discrete fibration, the left Kan extension can be computed as a coproduct over fibres. In case of the terminal presheaf, we get the coproduct of $1$ over the fibres of $\pi$, that is the fibres of $\pi$ themselves. So, $\pi_!(1) = \alpha$ and by adjunction,
	\[
	\mathrm{Map}_{[\Omega_p,\mathrm{SSet}]} (\alpha,Y) \sim \mathrm{Map}_{[\Omega_p^*,\mathrm{SSet}]} (1,\pi^* Y) = \holim_{\Omega_p^*} \pi^* Y,
	\]
	where $Y := v^*(X)$. We get the desired result by combining Lemma \ref{lemmabimodulesbimodules}, Lemma \ref{lemmafibrationsequence} and Lemma \ref{lemmaholimcontractible}.
\end{proof}

\begin{definition}
	We will call \emph{hyperoperads} algebras of the polynomial monad $\mathbf{Id}^{+3}$. We will write $\mathrm{HOp}$ for the category of hyperoperads.
\end{definition}

\begin{corollary}\label{corollarytripledelooping}
	Let $\mathcal{O}$ be a multiplicative hyperoperad. Assume that for all planar trees $T$ with zero or one vertices, that is, the free living edge or the corollas, $\mathcal{O}_T$ is contractible. Assume further that $\mathcal{O}^\bullet$ is equipped with retractions. Then we have a weak equivalence
	\[
	\Omega^3 \mathrm{Map}_{\mathrm{HOp}} (\zeta,u^*(\mathcal{O})) \sim \holim_{\Omega_p} \mathcal{O}^\bullet.
	\]
\end{corollary}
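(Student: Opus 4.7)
The plan is to iterate three deloopings, starting from the multiplicative hyperoperad $\mathcal{O}$ viewed as a $(3,3)$-bimodule (since $\mathbf{Bimod}_{3,3}=\mathbf{Id}^{+3}$) and descending successively to $(2,3)$-, $(1,3)$- and $(0,3)$-bimodules. First I would apply Theorem~\ref{theoremtdhgeneral} at $(m,n)=(3,3)$ (case $m=n$), then again at $(m,n)=(2,3)$ (case $m=n-1$), and finally the Theorem immediately preceding the corollary to pass from $(1,3)$ down to $(0,3)$. The last step is exactly where the extra hypothesis bites: that Theorem requires $v^*\mathcal{O}$ to be equipped with retractions, which is guaranteed by the assumption on $\mathcal{O}^\bullet$. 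Each of the three fibration sequences has the shape
\[
\Omega \mathrm{Map}_{\mathrm{Bimod}_{m,n}}(\zeta,u^*\mathcal{O}) \to \mathrm{Map}_{\mathrm{Bimod}_{m-1,n}}(\zeta,v^*\mathcal{O}) \to \prod_{i \in I_{m-1}} \mathcal{O}_{\kappa(i)},
\]
so each collapses to the desired delooping equivalence as soon as the right-hand fibre is contractible.

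To verify contractibility of the three fibres, I would unwind the definition of $\kappa$: the first factor picks a free living edge of $B_m$, subsequent factors apply units $n-m$ times and then the target map $t_n$. Because a free living edge contributes no operation, the composite $\kappa(i)$ always lands in the subset of $Ptr$ consisting of planar trees with at most one vertex. More concretely, for $(m,n)=(3,3)$ the index set $I_2 \cong \mathbb{N}$ produces the planar corollas $C_k$, while for $(2,3)$ and $(1,3)$ the singleton index sets $I_1$ and $I_0$ produce only the free living edge of $Ptr$. Every such $\mathcal{O}_{\kappa(i)}$ is contractible by the reduceness hypothesis (\emph{$\mathcal{O}_T$ contractible for $T$ with zero or one vertex}), so each of the three fibres is contractible and each fibration sequence collapses to a weak equivalence.

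Chaining the three equivalences then yields
\[
\Omega^3 \mathrm{Map}_{\mathrm{HOp}}(\zeta,u^*\mathcal{O}) \sim \mathrm{Map}_{\mathrm{Bimod}_{0,3}}(\zeta,v^*\mathcal{O}),
\]
and Lemma~\ref{lemmabimodzerothree} identifies the right-hand side with $\holim_{\Omega_p} \mathcal{O}^\bullet$, since under the isomorphism between $(0,3)$-bimodules and covariant presheaves over $\Omega_p$ the terminal object $\zeta$ corresponds to the terminal presheaf, whose derived mapping space into $\mathcal{O}^\bullet$ is by definition the homotopy limit. The main analytic content has already been absorbed into the preceding Theorem — whose proof ultimately rests on Lemma~\ref{lemmaholimcontractible}, where the retractions hypothesis is essential — so the remaining work in this corollary is bookkeeping: pinning down that $\kappa$ at each of the three stages indeed takes values among trees with zero or one vertex, matching the reduceness hypothesis of the statement.
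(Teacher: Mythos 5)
Your decomposition into three deloopings—$(3,3)\to(2,3)$ via Theorem~\ref{theoremtdhgeneral} ($m=n$), $(2,3)\to(1,3)$ via Theorem~\ref{theoremtdhgeneral} ($m=n-1$), and $(1,3)\to(0,3)$ via the $(1,3)$ theorem with the retractions hypothesis—is exactly the argument the paper intends; the paper itself leaves the corollary without an explicit proof, and your reconstruction is faithful, including the final identification of $\mathrm{Map}_{\mathrm{Bimod}_{0,3}}(\zeta,-)$ with $\holim_{\Omega_p}$ via Lemma~\ref{lemmabimodzerothree}.

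One small factual slip: you claim that at both $(2,3)$ and $(1,3)$ the map $\kappa$ lands on the free living edge of $Ptr$. This is right for $(2,3)$ (where $I_1\to B_2$ picks the edgeless planar tree and the unit/target maps return it), but for $(1,3)$ the composite $I_0\to B_1\to B_2\to B_3\to I_3$ actually produces the \emph{trunk} $C_0$: the free living edge of $B_1$ is the linear tree with no vertices, the unit $I_2\to B_2$ sends it to the corolla $C_0$ with no leaves and one vertex, and a second unit plus $t_3$ return $C_0$. This matches the proof of the $(1,3)$ theorem, where the homotopy limit over $\Omega_p^*$ collapses to the value at the trunk $\gamma_0$. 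Since $C_0$ has exactly one vertex, it is still covered by the corollary's hypothesis that $\mathcal{O}_T$ is contractible for trees with zero or one vertex, so the conclusion is unaffected; but the tree in question has a vertex and is not the free living edge.
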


\subsection{Example: desymmetrisation of the Kontsevich operad}

Recall \cite[Section 3.5]{bwdquasitame} that for any polynomial monad $T$, there is a canonical map of polynomial monads from $T^+$ to the polynomial monad for symmetric operads. In particular, for $T=\mathrm{NOp}$, this map of polynomial monad induces a \emph{desymmetrisation} functor $des: \mathrm{SOp} \to \mathrm{HOp}$, where $\mathrm{SOp}$ is the category of symmetric operads. Explicitly, it is given, for a planar tree $T$, by $des(\mathcal{P})(T) = \mathcal{P}(|T|)$, where $|T|$ is the set of vertices of $T$.

Let $\mathcal{O}$ be a multiplicative hyperoperad. If $u^*(\mathcal{O})$ is the desymmetrisation of a reduced symmetric operad, then $\mathcal{O}^\bullet$ is equipped with retractions. Indeed, let $\mathcal{P}$ be the symmetric operad such that $des(\mathcal{P}) = u^*(\mathcal{O})$. The retraction, for a morphism $\partial_e: T /e \to T$ in $\Omega_p$, is given by the map
\[
	\mathcal{O}(T) = \mathcal{P}(|T|) \otimes \mathcal{P}(\varnothing) \xrightarrow{\circ_v} \mathcal{P}(|T| \setminus \{v\}) = \mathcal{O}(T/e),
\]
where $v \in |T|$ is the vertex directly above $e$, and $\circ_v$ is the multiplication of the symmetric operad given in terms of partial operations \cite{markl2008operads}. We will now give a non-trivial example of multiplicative hyperoperad.

For $m \geq 2$, let $\tilde{C}_m(n)$ be the quotient of the configuration space
\[
\mathrm{Conf}_n(\mathbb{R}^m) = \{ (x_1,\ldots,x_n) \in (\mathbb{R}^m)^n, \text{ $x_i \neq x_j$ if $i \neq j$} \}
\]
with respect to the action of the group $G_m = \{ x \mapsto \lambda x + v | \lambda > 0, v \in \mathbb{R}^m \}$.

\begin{definition} \cite[Definition 12]{kontsevich}
	The \emph{Kontsevich operad} $\mathcal{K}_m(n)$ is the closure of the image $\tilde{C}_m(n)$ in $(S^{m-1})^{n \choose 2}$ under the map
	\[
	G_m \cdot (x_1,\ldots,x_n) \mapsto \left( \frac{x_j - x_i}{|x_j - x_i|} \right)_{1 \leq i < j \leq n}
	\]
	Set-theoretically, the operad $\mathcal{K}_m$ is the same as the free operad generated by the symmetric collection of sets $(\tilde{C}_m(n))_{n \geq 0}$.
\end{definition}

\begin{lemma}
	The hyperoperad obtained by desymmetrisation of the Kontsevich operad $\mathcal{K}_m$ has a multiplicative structure.
\end{lemma}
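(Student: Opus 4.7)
The plan is to realise the multiplicative structure on $des(\mathcal{K}_m)$ as coming from the canonical family of collinear configurations in the Kontsevich operad. A multiplicative structure on $des(\mathcal{K}_m)$ is by definition a map $\zeta \to des(\mathcal{K}_m)$ in $\mathrm{HOp}$; since $\zeta(T)$ is a singleton, such a map amounts to a compatible family $\mu_T \in \mathcal{K}_m(|T|)$ indexed by planar trees $T$. I would reduce the problem to producing a single family $\mu_n \in \mathcal{K}_m(n)$ for $n \geq 0$ satisfying the operad identity $\mu_n \circ_i \mu_k = \mu_{n+k-1}$, and then define $\mu_T := \mu_{|T|}$ using the canonical depth-first, left-to-right enumeration of the vertices of $T$ to identify $\mathcal{K}_m(|T|)$ with $\mathcal{K}_m(n)$ where $n = |T|$.

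To construct the $\mu_n$, I would embed $\mathbb{R} \hookrightarrow \mathbb{R}^m$ as the first coordinate axis and let $\mu_n$ be the class in $\mathcal{K}_m(n)$ of the collinear configuration $(0, 1, \dots, n-1) \in \tilde{C}_m(n)$. Under the Gauss map every ordered pair of points contributes the same unit vector $(1, 0, \dots, 0) \in S^{m-1}$, so this class is independent of the chosen spacings, and for $n \in \{0,1\}$ it recovers the unique element of the singleton $\mathcal{K}_m(n)$. The partial composition $\mu_n \circ_i \mu_k$ is then computed by inserting a suitably rescaled representative of $\mu_k$ at the $i$-th point of a representative of $\mu_n$; the resulting configuration of $n+k-1$ points is again collinear along the first axis in the standard left-to-right ordering, hence its Gauss map coincides with that of $(0, 1, \dots, n+k-2)$, yielding the desired identity $\mu_n \circ_i \mu_k = \mu_{n+k-1}$ in $\mathcal{K}_m(n+k-1)$.

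Finally, I would verify that the assignment $T \mapsto \mu_{|T|}$ is compatible with the hyperoperad multiplication on $des(\mathcal{K}_m)$. For any $\mathbf{NOp}$-tree $\mathcal{T}$ with vertex decorations $R_v$ and target $t(\mathcal{T})$, the structure map of $des(\mathcal{K}_m)$ indexed by $\mathcal{T}$ unfolds into an iterated sequence of partial compositions $\circ_v$ in $\mathcal{K}_m$ organised along the edges of $\mathcal{T}$; feeding in the elements $\mu_{|R_v|}$ and applying the identity of the previous paragraph inductively level by level collapses this iterated composition to $\mu_{\sum_v |R_v|} = \mu_{|t(\mathcal{T})|}$, which is exactly the value at $t(\mathcal{T})$ of the candidate map $\zeta \to des(\mathcal{K}_m)$. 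The main obstacle in this last step is matching the canonical orderings used on either side: the depth-first, left-to-right enumeration of the vertices of $t(\mathcal{T})$ must agree with the enumeration induced by the iterated partial compositions, which is a routine but not entirely automatic combinatorial bookkeeping about how substitution at a vertex of a planar tree interacts with the planar traversal.
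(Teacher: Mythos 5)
Your construction diverges substantially from the paper's, and the gap you flagged at the end is fatal rather than routine. The paper does not use the collinear configuration $(0,1,\dots,n-1)$; it uses a genuinely two-dimensional boundary point $x(T)\in\mathcal{K}_m(|T|)$ that depends on the tree structure of $T$, not just on $|T|$: $x(T)_{ij}=e_1$ when $i$ is an ancestor of $j$, and $x(T)_{ij}=e_2$ when $i$ is to the left of $j$. Your $\mu_T:=\mu_{|T|}$ (with the DFS order) has all pairwise vectors equal to $\pm e_1$, and this cannot satisfy the hyperoperad compatibility.

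Here is a concrete failure of your bookkeeping. Let $S$ be the planar tree with root $w$ (ternary) and three children $w_1,w_2,w_3$. Let $R$ be the planar tree with root $r_1$ (ternary), whose middle input carries a unary vertex $r_2$; $R$ has $3$ leaves. Substituting $R$ at $w$ yields the composite $T$ with root $r_1$, children $w_1,r_2,w_3$ (left to right), and $r_2$'s child $w_2$. The DFS order on $T$ is $r_1,w_1,r_2,w_2,w_3$, so $(\mu_T)_{w_1,r_2}=e_1$, i.e.\ $(\mu_T)_{r_2,w_1}=-e_1$. On the other hand, the desymmetrised structure map for the corresponding $\mathbf{NOp}$-tree is the operadic partial composition $\circ_w$, and for $r_2\in|R|$, $w_1\in|S|\setminus\{w\}$ one has
\[
\bigl(\mu_S\circ_w\mu_R\bigr)_{r_2,w_1}=(\mu_S)_{w,w_1}=e_1,
\]
since $w$ precedes $w_1$ in the DFS order on $S$. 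So $e_1\neq-e_1$ and multiplicativity fails. The underlying reason is that when $R$ is not a corolla, the vertices of $R$ are interleaved with descendants of $w$ in the DFS order on $T$, and the DFS order on $T$ is not the one induced by inserting the DFS order on $R$ into the DFS order on $S$ at position $w$.

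More conceptually, a multiplicative structure on a hyperoperad is a family $x(T)\in\mathcal{O}_T$ indexed by planar trees that is compatible with all substitutions, and this family must genuinely remember the shape of $T$; a function of $|T|$ alone, as you propose, collapses too much. Your collinear configurations do produce a multiplicative structure on the underlying non-symmetric operad (a map $Ass\to u^*des(\mathcal{K}_m)$), but not on the hyperoperad $des(\mathcal{K}_m)$ itself. To fix the argument along the lines of the paper, you would replace the collinear configuration by a planar one: place the vertices of $T$ in $\mathbb{R}^2\subset\mathbb{R}^m$ so that ancestry is recorded by the $e_1$-direction and sibling order by the $e_2$-direction, and take the limit in the Kontsevich compactification. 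Then the boundary point is exactly $x(T)$ as in the paper, and multiplicativity must be checked directly from the partial-composition formula.
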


\begin{proof}
	We write $x = (x_{ij})_{i \neq j \in |T|}$ for an element of $\mathcal{K}_m(T)$. 
	Let $e_1,e_2 \in S^{m-1}$ given by the standard inclusion of $\mathbb{R}^2$ into $\mathbb{R}^m$. Let $x(T) \in \mathcal{K}_m(T)$ given by
	\[
	x(T)_{ij} = \begin{cases}
	e_1 &\text{if $i$ is below $j$,} \\
	e_2 &\text{if $i$ is to the left of $j$.} \end{cases}
	\]
	It is easy to check that this does give a multiplicative structure.
\end{proof}

In particular we can apply Corollary \ref{corollarytripledelooping} to the desymmetrisation of the Kontsevich operad.

\bibliographystyle{plain}
\bibliography{references}

\end{document}